\numberwithin{equation}{section}
\newtheorem{definition}{Definition}[section]
\newtheorem{theorem}[definition]{Theorem}
\newtheorem{lemma}[definition]{Lemma}
\newtheorem{proposition}[definition]{Proposition}
\def\vec#1{\mathchoice{\mbox{\boldmath$\displaystyle#1$}}
{\mbox{\boldmath$\textstyle#1$}}
{\mbox{\boldmath$\scriptstyle#1$}}
{\mbox{\boldmath$\scriptscriptstyle#1$}}}
\newcommand\dd{{\mathrm d}}
\newcommand\ism{\cong}
\newcommand\nix{\,\cdot\,}
\newcommand{\toboss}{\uparrow}
\newcommand{\fromboss}{\downarrow}
\newcommand{\edgel}{\cbc{000,001,010,111}}
\newcommand{\vertexl}{\cbc{000,001,010,110,111}}
\newcommand{\labelset}{\cbc{000,001,010,110,111}}
\newcommand\atom{\delta}
\newcommand\thet{\vartheta}
\newcommand\eps{\varepsilon}
\newcommand\vv{\vec v}
\newcommand\vF{\vec F}
\newcommand\vG{\vec G}
\newcommand\vH{\vec H}
\newcommand\T{\vec T}
\newcommand\cB{\mathcal{B}}
\newcommand\cC{\mathcal{C}}
\newcommand\cE{\mathcal{E}}
\newcommand\cF{\mathcal{F}}
\newcommand\cT{\mathcal{T}}
\newcommand\cI{\mathcal{I}}
\newcommand\cJ{\mathcal{J}}
\newcommand\cL{\mathcal{L}}
\newcommand\cP{\mathcal{P}}
\newcommand\cX{\mathcal{X}}
\newcommand\cV{\mathcal{V}}
\newcommand\fF{\mathcal{F}}
\newcommand\fG{\mathcal{G}}
\newcommand\RR{\mathbb{R}}
\newcommand\core{\cC}
\DeclareMathOperator{\pr}{\mathbb P}
\newcommand\Erw{\mathbb{E}}
\newcommand{\vecone}{\vec{1}}
\newcommand{\Po}{{\rm Po}}
\newcommand{\Bin}{{\rm Bin}}
\newcommand{\Be}{{\rm Be}}
\newcommand{\bink}[2] {{{#1}\choose {#2}}}
\newcommand\br[1]{\left(#1\right)}
\newcommand\bc[1]{\left({#1}\right)}
\newcommand\cbc[1]{\left\{{#1}\right\}}
\newcommand\brk[1]{\left\lbrack{#1}\right\rbrack}
\newcommand\abs[1]{\left|{#1}\right|}
\newcommand{\whp}{w.h.p.}
\newcommand{\Erdos}{Erd\H{o}s}
\newcommand{\Renyi}{R\'enyi}
\newcommand{\Bollobas}{Bollob\'as}
\newcommand\Lem{Lemma}
\newcommand\Prop{Proposition}
\newcommand\Thm{Theorem}
\newcommand\Chap{Chapter}
\newcommand\RSA{Random Structures \& Algorithms}
\newcommand\JCTB{Journal of Combinatorial Theory, Series~B}
\title{The core in random hypergraphs and local weak convergence}
\author{Kathrin Skubch$^*$}
\thanks{$^*$Goethe University, Mathematics Institute, 10 Robert Mayer St, Frankfurt 60325, Germany, skubch@math.uni-frankfurt.de}
\begin{document}
\maketitle
\begin{abstract}
\noindent The degree of a vertex in a hypergraph is defined as the number of edges incident to it.
In this paper we study the $k$-core, defined as the maximal induced subhypergraph of minimum degree $k$, of the random $r$-uniform hypergraph $\vH_r(n,p)$ for $r\geq 3$. 
We consider the case $k\geq 2$ and $p=d/n^{r-1}$ for which
asymptotically
 every vertex has fixed average degree $d/(r-1)!>0$. We derive a multi-type branching process
that describes the local structure of the $k$-core together with the mantle, i.e. the vertices outside the $k$-core. This paper
provides a generalization of 
prior work on the $k$-core
problem in random graphs [A. Coja-Oghlan, O. Cooley, M. Kang,
K. Skubch: 
How does the core sit inside the mantle?, \RSA~{\bf 51} (2017) 459--482].
\bigskip
\noindent

\emph{Mathematics Subject Classification:} 05C80, 05C65 (primary), 05C15 (secondary)

\emph{Key words:} Random hypergraphs, core, branching process, Warning Propagation

\end{abstract}
\section{Introduction}
\noindent{\em 
Let $r\geq 3$ and let $d>0$ be a fixed constant. We denote by $\vec H=\vec H_r(n,d/n^{r-1})$ the random $r$-uniform hypergraph on the vertex set $[n]=\cbc{1,\ldots,n}$
	in which any subset $a$ of $[n]$ of cardinality $r$ forms an edge with probability $p=d/n^{r-1}$ independently.
We say that the random hypergraph $\vec H$ enjoys a property with high probability (`\whp') if the probability of this event tends to $1$ as $n$ tends to infinity.}

\subsection{Background and motivation}\label{sec_intro}
Since the notion of random graphs was first introduced by \Erdos~and \Renyi, many striking results in this area were achieved. One important 
theme is the emergence of the giant component in this model \cite{BB,JLR}. In their seminal paper from 1960 \cite{ER} \Erdos~and
\Renyi~proved that the structure of the sparse random graph $\vG(n,p)$ with $p=d/n$ suddenly changes. 
To be precise, for any $\eps>0$
the following is true.
If $d<1-\eps$ all connected components in this model are of small size \whp, whereas if $d>1+\eps$ there is a single giant component of linear size. 
In
\cite{Karp} Karp gave an elegant proof of 
the original result of \Erdos~and \Renyi~by means of 
branching processes.
A key observation in this work is that the limit of the normalized size of the giant component coincides with the survival
probability of a certain Galton Watson process \cite{Karp}. Though this
Galton Watson tree is the limit of the ''local`` structure of the sparse random graph, this branching process approach allows to describe a invariant of the random graph that highly
depends on its ''global`` structure.

By now, there is a considerable body of work on the component structure in the generalization of the binomial random graph model to hypergraphs. 
In \cite{Pruzan} Schmidt-Pruzan and
Shamir proved a threshold phenomenon similar to the one in \cite{ER} for the random $r$-uniform hypergraph with $r\geq 3$. Furthermore, Behrisch, Coja-Oghlan and Kang presented a probabilistic approach
to determine a local limit theorem for the number of vertices in the largest component of the sparse random hypergraph, \cite{Behrisch,Behrisch2}. This result 
has been complemented 
by \Bollobas~and Riordan 
in \cite{BollobasRiordan}.

One possible way to extend the notion of connected components is to consider subgraphs or subhypergraphs of some given minimal degree. Along these lines the $k$-core of a graph is 
defined as the maximal induced subgraph of minimal degree $k$. In the notion of hypergraphs the degree of a vertex is defined as the number of edges incident to it. Using this definition the 
$k$-core of a hypergraph is defined analogously. The $k$-core of a graph or hypergraph is unique because the union of two subgraphs or subhypergraphs with minimal degree $k$ again has minimal degree
$k$. If there is a vertex of degree strictly less than $k$, removing this vertex and the edges incident to it will not change the $k$-core. Therefore, the $k$-core is obtained 
by repeatedly removing edges incident to
vertices of degree less than $k$ from the graph or hypergraph.  

The $k$-core problem in random graphs has attracted a great deal of attention. 
In \cite{Pittel} Pittel, Spencer and Wormald 
proved that for any $k\geq 3$ the property of having a extensive $k$-core has a sharp threshold and 
determined the precise value below which the $k$-core of the sparse random graph is empty \whp~ In the preliminaries they provided a short heuristic argument to predict this threshold.
What is more, this argument has been used to provide shorter proofs of their result \cite{MolloyCores,Riordan}.
Some structural results on the local structure of the $k$-core follow from either the study of the peeling process \cite{Cooper,JansonLuczak,Kim} or the branching process 
heuristic \cite{Darling,Riordan,Encores}.

Cores in random hypergraphs, where edges consist of $r\geq 3$
vertices, are of further interest because they play an important role in the study of random constraint satisfaction problems, 
\cite{MolloyAchlioptas,Mossel,MolloyRestrepo}. The $k$-core in particular has application to the XORSAT-problem. 
To be precise, every instance of XORSAT
admits a natural hypergraph representation
and
the original instance is solvable iff the system of
linear equations restricted to the $2$-core
of this representation is solvable. 

Motivated by this fact many authors studied the $k$-core problem in $r$-uniform
hypergraphs for $r\geq 3$. The heuristic argument from \cite{Pittel} has been extended by Molloy to determine the threshold $d_{r,k}$ for the appearance of
an extensive $k$-core in the sparse random hypergraph $\vH$. To state his result we denote by $\cC_k(H)$ the set of vertices in the $k$-core of a hypergraph $H$. 
Then Molloy proved that for any $r\geq 3$ and $k\geq 2$ there is a function $\psi_{r,k}:(0,\infty)\to [0,1]$ such that for any $d\in(0,\infty)\setminus\cbc{d_{r,k}}$
the sequence $(n^{-1}|\cC_k(\vH)|)_n$ converges to $\psi_{r,k}(d)$ in probability. The function $\psi_{r,k}$ is identical to $0$ for all $d<d_{r,k}$ and strictly positive for $d>d_{r,k}$.
That is,  
if $d<d_{r,k}$ the $k$-core of $\vH$ is empty \whp, whereas if $d>d_{r,k}$
the $k$-core of $\vH$ contains a linear number of vertices.
Throughout the paper
we will refer to the number $|\cC_k(H)|$ of vertices in the $k$-core as the size of the $k$-core. Beside \cite{MolloyCores} there are several different approaches to determine this threshold by 
studying the peeling process similar to the one in the paper by Pittel, Spencer and Wormald \cite{Encores, Cooper, JansonLuczak, Kim}. In \cite{Botelho} Botelho, Wormald and Ziviani proved
that this threshold remains the same if the model is restricted to $r$-partite $r$-uniform hypergraphs. 
 
The aim of the present paper is to describe the connections between the vertices in the core and in the mantle. More precisely, our aim is to determine the following
distribution. Fix $r\geq 3$, $k\geq 2$, $d>d_{r,k}$.
Generate a random hypergraph $\vH$. Now, apply the peeling process that identifies the $k$-core of $\vH$ and colour each vertex that belongs to the $k$-core black and all other vertices white.
Finally, pick a vertex $\vec v$ uniformly at random.
What is the distribution of the {\em coloured} subhypergraph
induced on the set of all vertices up to some constant distance from $\vec v$?

For simplicity we adopt the factor graph notion that provides 
a natural representation of hypergraphs as bipartite graphs \cite{MM}. In this framework we will introduce 
a branching process that generalizes the usual Galton Watson convergence of the sparse random graph to hypergraphs. This process is the limiting object of the local structure
of the random hypergraph. The appropriate formalism to state this result is the notion of local weak convergence, \cite{Aldous,BenjaminiSchramm,BordenaveCaputo}. 
To complement this process that describes the local structure of the random hypergraph with colours that indicate the membership of the $k$-core
we will characterize the $k$-core by means of a message passing algorithm, called Warning Propagation. 
Just like cores, message passing algorithms are an important tool in the study of random constraint satisfaction problems,
particularly in the context of the study of the ''solution space``~\cite{Barriers,MM,Molloy}.
The characterization of the $k$-core by Warning Propagation was first suggested 
in the physics literature, \cite[\Chap~18]{MM}.
Furthermore, in a recent paper we 
combined this characterization with 
local weak convergence results
to study the connections
of the $k$-core and the mantle 
in the binomial random graph 
\cite{kcore}.
The present paper provides a generalization 
of \cite{kcore} to random hypergraphs.

\subsection{Results}\label{sec_results}
To state our main result it will be convenient to use a natural representation of hypergraphs as bipartite factor graphs. 
We begin with introducing various standard notions for graphs which carry over to this
representation.
In this paper all graphs are assumed to be locally finite with a countable vertex set. Given a vertex $v$ in a graph $G$ we denote by $\partial_G v$ the set of vertices that are adjacent to $v$ 
in $G$. We omit $G$ whenever $G$ is clear from the context. By a rooted graph $(G,v_0)$ we mean a graph $G$ together with a distinguished vertex $v_0$, the root. 
If $\cX$ is a finite set then a $\cX$-marked graph $(G,v_0,\sigma)$ is a rooted graph together with a map $\sigma: V(G)\to\cX$. A bijection $\pi:V(G)\to V(G')$ is called isomorphism if
every edge in $G$ is mapped to an edge in $G'$ and every non-edge in $G$ is mapped to a non-edge in $G'$. Two rooted $\cX$-marked graphs
$(G,v_0,\sigma)$ and $(G',v_0',\sigma')$ are called isomorphic if there is an isomorphism $\pi$ such that $\pi(v_0)=v_0'$
and $\sigma=\sigma'\circ\pi$. We denote by $[G,v_0,\sigma]$ the isomorphism class of $(G,v_0,\sigma)$
and let $\fG^{\cX}$ be the set of all isomorphism classes of $\cX$-marked graphs. 
Finally, for $s\geq 0$ let $\partial^s[G,v_0,\sigma]$ signify the isomorphism class  
of the rooted $\cX$-marked graph obtained by deleting all vertices at a distance greater than $s$ from $v$.

A factor graph 
$F=(\cV(F),\cF(F),\cE(F))$ is a bipartite graph 
on the parts $\cV(F)$ and $\cF(F)$
and edgeset $\cE(F).$
 We will refer to $\cV(F)$ as the variable nodes of $F$ and to $\cF(F)$ as the factor nodes of $F$. 
In the special case of factor graphs we will require that the root of a rooted factor graph is a variable node, i.e. $v_0\in\cV(F).$ 
Similarly, we denote by a $\cX$-marked factor graph a rooted factor graph together with a map $\sigma : \cV(F)\cup\cF(F) \rightarrow \cX$.
The restriction that $v_0\in\cV(F)$ immediately implies that if two rooted factor graphs $(F,v_0,\sigma)$ and $(F',v_0',\sigma')$ are isomorphic
and if 
$\pi:\cV(F)\cup\cF(F)\rightarrow \cV(F')\cup\cF(F')$ is an isomorphism of $F$ and $F'$
then $\pi(\cV(F))=\cV(F')$ and $\pi(\cF(F))=\cF(F')$. We denote by $\fF^{\cX}$ the set of all isomorphism classes of $\cX$-marked
factor graphs.

To represent a given hypergraph $H$
as a factor graph $F_H$, we let $\cV(F_H)$ be the set of vertices in $H$ and $\cF(F_H)$ be the set of edges in $H$. For every variable node $v\in\cV(F_H)$ and factor node 
$a\in\cF(F_H)$ we place an edge in $F_H$ iff $a$ is an edge in $H$ incident to $v$. Then $\partial_{F_H} a$ is nothing but the set of vertices $v$ in $H$ that forms the edge $a$
and $\partial_{F_H}v$ is the set of edges incident to $v$.
Resembling the terminology for hypergraphs, we call a factor graph $F$ $r$-uniform, if all
factor nodes $a\in\cF(F)$ have degree $r$ in $F$, i.e. $|\partial_Fa|=r$. 

Our aim is to study the random $r$-uniform hypergraph together with the marking that indicates the membership of vertices of the $k$-core of $\vH$. 
Therefore, we introduce the notion of $k$-core for $r$-uniform factor graphs. 
In order to match the definition of the 
$k$-core of hypergraphs we will describe the $k$-core of a $r$-uniform factor graph as the subgraph that remains when we repeatedly remove 
factor nodes incident to at least one variable node of degree strictly less than $k$.
We denote by $\cC_k(F)$ the set of variable nodes in the $k$-core of $F$. By the peeling process description it is easy to see that a factor node is in the $k$-core of $F$ iff all its neighbours
are in the $k$-core of $F$. Therefore, the $k$-core of $F$ coincides with the maximal induced subgraph of $F$
such that each variable node $v$ has minimal degree $k$
and each factor node has degree precisely $r$.
If we are given a hypergraph $H$ the peeling process above is analogous to the one described in \S~\ref{sec_intro}, i.e.
a vertex of a hypergraph $H$ is in the $k$-core of $H$ iff the corresponding variable node is in the $k$-core of $F_H$. Let
$\sigma_{k,F}:\cV(F)\cup\cF(F)\rightarrow\{0,1\}$, $v\mapsto\vecone\cbc{v\in\cC_k(F)}$ if $v\in\cV(F)$ and $a\mapsto\vecone\cbc{\partial a\subset\cC_k(F)}$
if $a\in\cF(F)$. 
Finally, denote by $F_x$ the component of $x\in\cV(F)\cup\cF(F)$ in $F$.
In the case of the random $r$-uniform hypergraph we will use the shorthand $F_{\vH}=\vF$ and $\vF_v=F_{\vH,v}.$
Let $\vec v$ denote a randomly chosen vertex in $V(\vH)$. Then our aim is to determine the limiting distribution of $\partial^s[\vF_{\vec v},\vec v,\sigma_{k,\vF_{\vec v}}]$
in the formalism of local weak convergence.

To this end, we construct a multi-type branching process that generates an acyclic factor graph that proceeds in two phases. 
Starting with a single root, which is a variable node, the levels of the process alternately consist of variable and factor nodes. 
This process exhibits $9$ node types in total. The nodes at even distances from the root are variable nodes of possible types $\vertexl$, the
nodes at odd distances are factor nodes of possible types $\edgel$. 
 
The branching process $\hat \T(d,r,k,p)$ has parameters $d, r, k$ and a further parameter $p\in[0,1]$.
Setting $c=c(d,r)=d/(r-1)!$ and
	\begin{align}\label{type_prob}
	q=q(d,k,p)&=\pr\brk{\Po(cp^{r-1}) =k-1|\Po(cp^{r-1})\geq k-1},
	\end{align}
we define
	\begin{align*}
	p_{000}&=1-p,&
	p_{010}&=pq,&
	p_{111}&=p(1-q).
	\end{align*}
Finally, the branching process involves two other coefficients
$$\bar{q}=\bar{q}(d,k,p) =\pr\brk{\Po(cp^{r-1})=k-2|\Po(cp^{r-1})\leq k-2}$$
and
$$\tilde{q}=\tilde{q}(d,k,p) =\pr\brk{\Bin\br{r-1,p}=r-2|\Bin\br{r-1,p}\leq r-2}.$$

The process starts with a single variable node $v_0$, the root. The root is of type $000$, $010$ or $111$ with probability $p_{000}$, $p_{010}$ and $p_{111}$ respectively.
Starting at $v_0$, every variable node has a random number of factor node children independently. The offspring distributions of variable nodes are defined by the generating 
functions in Figure~\ref{Fig_g} where $\vec x=(x_{000},x_{001},x_{010},x_{111})$. That is, 
for an integer vector $\vec y=(y_{000},y_{001},y_{010},y_{111})$ the probability that a variable node of type
$v_1v_2v_3$ generates $y_{a_1a_2a_3}$ children of type $a_1a_2a_3$ for $a_1a_2a_3\in\edgel$ equals the coefficient of 
$x_{000}^{y_{000}}\cdots x_{111}^{y_{111}}$ in $g_{v_1v_2v_3}(\vec x)$. The offspring distributions of factor nodes follow in the same manner from Figure~\ref{Fig_f}, where
$\vec z=(z_{000},z_{001},z_{010},z_{110},z_{111}).$

\begin{figure}\footnotesize
	\begin{align*}
	g_{000}(\vec x)
		&=\exp(cp^{r-1}(x_{000}-1))\frac{\sum_{h=0}^{k-2}(cp^{r-1}x_{010})^{h}/h!}{\sum_{h=0}^{k-2}(cp^{r-1})^h/h!},\\
	g_{001}(\vec x)
		&=\bar{q}\br{\exp(cp^{r-1}(x_{001}-1))x_{010}^{k-2}}\\
		&\quad +(1-\bar{q})\br{\exp(cp^{r-1}(x_{000}-1))\frac{\sum_{h=0}^{k-3}(cp^{r-1}x_{010})^{h}/h!}{\sum_{h=0}^{k-3}(cp^{r-1})^h/h!}},\\
	g_{010}(\vec x)&=\exp(cp^{r-1}(x_{001}-1))x_{010}^{k-1},\\
	g_{110}(\vec x)&=\exp(cp^{r-1}(x_{001}-1))	\frac{\sum_{h\geq k}(cp^{r-1}x_{111})^{h}/h!}{\sum_{h\geq k}(cp^{r-1})^h/h!},\\
	g_{111}(\vec x)&=\exp(cp^{r-1}(x_{001}-1))\frac{\sum_{h\geq k-1}(cp^{r-1}x_{111})^{h}/h!}{\sum_{h\geq k-1}(cp^{r-1})^h/h!}.
	\end{align*}
\caption{The generating functions $g_{v_1v_2v_3}(\vec x)$ of variable nodes.}\label{Fig_g}
\end{figure}
\begin{figure}\footnotesize
	\begin{align*}
	f_{000}(\vec z)
		&=\frac{\sum_{s=0}^{r-2}\bink{r-1}{s}p^s(1-p)^{r-1-s}(qz_{010}+(1-q)z_{110})^{s}z_{000}^{r-1-s}}
                 {\sum_{h=0}^{r-2}\bink{r-1}{s}p^s(1-p)^{r-1-s}},\\
	f_{001}(\vec z)
		&=\tilde{q}\br{z_{001}\br{qz_{010}+(1-q)z_{110}}^{r-2}}\\
		&\qquad+(1-\tilde{q})\frac{\sum_{s=0}^{r-3}\bink{r-1}{s}p^s(1-p)^{r-1-s}(qz_{010}+(1-q)z_{110})^{s}z_{000}^{r-1-s}}
                 {\sum_{h=0}^{r-3}\bink{r-1}{h}p^h(1-p)^{r-1-h}},\\
	f_{010}(\vec z)&=\br{qz_{010}+(1-q)z_{110}}^{r-1},\\
	f_{111}(\vec z)&=z_{111}^{r-1}.
	\end{align*}
\caption{The generating functions $f_{a_1a_2a_3}(\vec z)$ of factor nodes.}\label{Fig_f}
\end{figure}
Let $\T(d,r,k,p)$ denote the $\{0,1\}$-marked acyclic factor graph by giving mark $0$ to 
all nodes of type $000$, $001$ or $010$, and mark $1$ to vertices of type $110$ or $111$. 
Then $\T(d,r,p)$ has $4$ different types of nodes in total. Recall that $d_{r,k}$ is the threshold for the appearance of a extensive $k$-core 
in $\vH$. Our main result is the following.

\begin{theorem}\label{Thm_main}
Assume that $r\geq 3$, $k\geq 2$ and $d>d_{r,k}$.
 Moreover, let $p^*$ be the largest fixed point of the continuous function
 	\begin{equation}\label{eqmain}
 	\phi_{d,r,k}:[0,1]\to[0,1],\qquad p\mapsto\pr\brk{\Po\br{\frac{d}{(r-1)!}p^{r-1}}\geq k-1}.
 	\end{equation}
If $\tau$ is a rooted acyclic factor graph with marks in $\{0,1\}$, then 
	$$\frac1n\sum_{v\in \cV(\vF)}
		\vecone\cbc{\partial^\omega[\vF,v,\sigma_{k,\vF_{v}}]=\partial^\omega[\tau]}$$
converges to $\pr\brk{\partial^\omega[\T(d,r,k,p^*)]=\partial^\omega[\tau]}$ in probability.
\end{theorem}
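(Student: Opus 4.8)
The plan is to combine three ingredients: (i) a \emph{local weak convergence} statement for the bare random factor graph $\vF$ (without marks), which should give that the empirical distribution of $\partial^\omega[\vF_{\vec v},\vec v]$ converges to a multi-type Galton--Watson tree $\T_0$ in which a root variable node has $\Po(c)$ factor-node children and each factor node has $r-1$ further variable-node children, each an independent copy; (ii) the Warning Propagation characterization of the $k$-core, which labels each directed edge of the factor graph by a message in a finite alphabet and determines $\sigma_{k,\vF_v}$ as a local function of the messages in the $\omega$-neighbourhood of $v$; and (iii) the fixed-point analysis of $\S\ref{sec_thresh}$, by which the law of the WP messages on the random factor graph is governed by $p^*$, the largest fixed point of $\phi_{d,r,k}$. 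The first step is to make (i) precise: one shows, via the standard first/second-moment argument for subgraph counts in $\vH_r(n,d/n^{r-1})$ (or by citing the analogue of the Benjamini--Schramm limit for sparse random hypergraphs), that for every fixed rooted acyclic factor graph $\tau_0$ without marks, $\frac1n\sum_{v}\vecone\{\partial^\omega[\vF_v,v]=\partial^\omega[\tau_0]\}\to\pr[\partial^\omega[\T_0]=\partial^\omega[\tau_0]]$ in probability; because $\omega\to\infty$ slowly, one also needs a uniform tail bound showing long cycles and high-degree vertices near $\vec v$ are negligible.

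Next I would port the Warning Propagation setup into the factor-graph language. Define directed messages $\mu_{v\to a}\in\{0,1\}$ and $\mu_{a\to v}\in\{0,1\}$, where $\mu_{v\to a}=1$ indicates ``$v$ would survive the peeling even if edge $a$ were deleted'' and $\mu_{a\to v}=1$ indicates ``$a$ sends support to $v$''. Cite $\S\ref{sec_thresh}$ and \cite{Molloy} for the fact that WP run to its fixed point on $\vF$ correctly computes $\sigma_{k,\vF_v}$, and that $\sigma_{k,\vF_v}$ — together with the refined $9$-type label recording the ``borderline'' status of $v$ and its incident factors — is a function of the messages at distance $\le 1$ from $v$; iterating, $\partial^\omega[\vF_v,v,\sigma_{k,\vF_v}]$ is a (continuous, in the product topology on $\fF^{\{0,1\}}$) function of $\partial^{\omega+1}[\vF_v,v]$ together with the incoming messages on the boundary. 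The key reduction is then: the law of $\partial^\omega$ of the \emph{marked} neighbourhood is obtained by running WP on the \emph{bare} local-weak limit $\T_0$ with the correct boundary message distribution, and the essential technical claim — this is the point made after \Lem\ref{Lemma_as} in the introduction — is that the stationary message law on $\vF$ coincides with the one obtained by plugging $p^*$ into the recursions, i.e. $\pr[\mu_{a\to v}=1]\to$ (the relevant marginal determined by $p^*$). Establishing this self-consistently, and matching it with the $9$-type recursion, yields exactly the generating functions $g_{\bullet}$ and $f_{\bullet}$ of Figures~\ref{Fig_g} and \ref{Fig_f}; the probabilities $q,\bar q,\tilde q$ arise as the Poisson/Binomial conditionals describing whether a borderline node is pushed over the threshold $k$ (resp.\ $r$) by one extra incident edge.

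Concretely the steps are: \textbf{(1)} bare local weak convergence of $\vF$ to $\T_0$ with quantitative error control; \textbf{(2)} WP is local: after $O(\log n)$ sweeps WP converges on $\vH$ \whp\ (cite \cite{MolloyAchlioptas,Molloy}) and its fixed point is unique on the relevant vertices, so $\sigma_{k,\vF_v}$ and the finer $9$-type label are measurable functions of a bounded-radius neighbourhood of $v$ plus boundary messages; \textbf{(3)} identify the stationary law of the boundary messages on $\vF$ with the one parametrized by $p^*$, using the fixed-point result of $\S\ref{sec_thresh}$ and the connection to the limit of \Lem\ref{Lemma_as}; \textbf{(4)} compute that running WP with these boundary statistics on $\T_0$ produces precisely $\T(d,r,k,p^*)$, i.e.\ verify the offspring generating functions node-type by node-type; \textbf{(5)} assemble: continuity of the marking map plus convergence of the bare neighbourhood plus convergence of the boundary-message law gives, by a standard approximation argument (truncating at radius $\omega$ and controlling the discrepancy between true messages and those computed from the truncated ball), that $\frac1n\sum_v\vecone\{\partial^\omega[\vF,v,\sigma_{k,\vF_v}]=\partial^\omega[\tau]\}\to\pr[\partial^\omega[\T(d,r,k,p^*)]=\partial^\omega[\tau]]$ in probability. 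The main obstacle is \textbf{step (3)} together with the ``boundary coupling'' inside step (5): one must show that the WP messages entering the radius-$\omega$ ball around $\vec v$ behave, jointly and in the limit, like i.i.d.\ samples from the $p^*$-stationary law, and that replacing the true (globally determined) messages by the locally recomputed ones changes $\sigma_{k,\vF_v}$ on only a vanishing fraction of roots — this is where the hypergraph analysis is genuinely harder than the graph case of \cite{kcore}, because the two message types interact and a single borderline factor node can flip under perturbation; handling this requires the monotonicity of WP and a careful argument that borderline configurations are rare (governed again by the conditional probabilities $q,\bar q,\tilde q$ being bounded away from $1$ for $d>d_{r,k}$).
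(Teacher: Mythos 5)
Your outline lands on the paper's route: bare local weak convergence of $\vF$ to the two-phase tree (\Thm~\ref{Thm_GW}), the Warning Propagation characterization of the core together with its monotonicity (\Lem~\ref{Lemma_WP}), identification of the stationary message law with $p^*$ via the contraction in \Lem~\ref{Prop_fix} and \Lem~\ref{Lemma_as}, and a type-by-type verification that running WP with $\Be(p^*)$ boundary messages on the tree yields the generating functions of Figures~\ref{Fig_g} and~\ref{Fig_f} (\Lem s~\ref{Lemma_sizeless}--\ref{Lem_id}). The one place where you deviate in a substantive way is the ``assembly'' step, and there your sketch is weaker than the paper's device. You propose to treat $\sigma_{k,\vF_v}$ as a function of a bounded-radius ball plus ``boundary messages,'' invoking $O(\log n)$ WP convergence; but $O(\log n)$ is not bounded, and the stationary boundary messages are themselves globally determined, so this is circular as stated. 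The paper avoids this by working entirely with the \emph{finite-time} marks $\mu_{\nix}(t|\vF)$, which are genuinely local (determined by radius $s+t$), proving $\Lambda_{d,r,k,n,t}\to\cL([\T_t(d,r,k)])$ for each fixed $t$ (\Lem~\ref{Prop_WP}) and then interchanging $t\to\infty$ with $n\to\infty$ (\Lem~\ref{lem_aux}). Moreover, the discrepancy control that you attribute to ``borderline configurations being rare, governed by $q,\bar q,\tilde q$ bounded away from $1$'' is not what the paper does, and that bound is neither proved nor needed. Instead, \Lem~\ref{Lemma_eps} is a clean first-moment squeeze: monotonicity gives $\cC_k(\vF)\subset\{v:\mu_v(t|\vF)=1\}$, and both $\frac1n|\{v:\mu_v(t|\vF)=1\}|$ and $\frac1n|\cC_k(\vF)|$ are shown to converge (as $n$, then $t$, $\to\infty$) to the \emph{same} constant $\phi_{d,r,k+1}(p^*)=\psi_{r,k}(d)$, using \Thm~\ref{Thm_Mol} and \Lem s~\ref{Prop_fix}, \ref{Lemma_as}. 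That squeeze is precisely the technical ingredient you should aim for in place of your step (3)/(5) coupling; with it, your steps (1), (2), (4) line up with the paper's \Thm~\ref{Thm_GW}, \Lem~\ref{Lemma_WP}, and \Lem s~\ref{Lemma_sizeless}--\ref{Lem_id}, and the reduction of \Thm~\ref{Thm_main} from \Thm~\ref{Thm_lwc} is the routine continuity argument you describe at the end.
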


\Thm~\ref{Thm_main} describes the limiting behaviour of the local structure of the random factor graph $\vF$.
We will reformulate this result in the framework of local weak convergence,
closely following~\cite{BordenaveCaputo},
which is part of weak convergence theory of probability measures.
To introduce this formalism
we assume that $\{0,1\}$ is equipped 
with the discrete topology.
Further, we endow $\fG^\cX$ with the coarsest topology that makes all the maps
\begin{equation}\label{topofct}
\kappa_{G,s}:\fG^\cX\rightarrow \{0,1\},~G'\mapsto\vecone\cbc{\partial^sG'\ism G} \mbox{ for } G\in\fG^\cX,~s\geq 0
\end{equation}
continuous.   
Let $\cP(\fG^{\cX})$ and $\cP^2(\fG^{\cX})$ denote the spaces of probability measures on $\fG^{\cX}$ and $\cP(\fG^{\cX})$, both equipped with the topology induced
by weak convergence of probability 
measures. 
For $G\in\fG^{\cX}$ let $\atom_{G}\in\cP(\fG^{\cX})$ signify the Dirac measure on $G$.
For $\lambda\in\cP(\fG^{\cX})$ let $\atom_\lambda\in\cP^2(\fG^{\cX})$ be the Dirac measure on $\lambda$. We reformulate our result in 
$\cP(\fF^{\cX})\subset\cP(\fG^{\cX})$ and $\cP^2(\fF^{\cX})\subset\cP^2(\fG^{\cX})$ as follows.

Given a $r$-uniform factor graph
on $n$ variable nodes let
$$\lambda_{k,F}=\frac{1}{n}\sum_{v\in\cV(F)}\atom_{[F_v,v,\sigma_{k,F}]}\in\cP(\fF^{\{0,1\}})$$
be the empirical distribution of the neighbourhoods of variable nodes marked according to the  membership of the 
$k$-core
and define
\begin{equation}\label{eqLambdadkn}
  \Lambda_{d,r,k,n}=\Erw_{\vF}\brk{\atom_{\lambda_{k,\vF}}}\in\cP^2(\fF^{\{0,1\}}).
\end{equation}
Finally, let 
$$\thet_{d,r,k,p}=\cL[\T(d,r,k,p)]\in\cP(\fF^{\{0,1\}}).$$
Then we can reformulate our result as follows.
\begin{theorem}\label{Thm_lwc}
Assume that $r \geq 3$, $k\geq 2$ and $d>d_{r,k}$. Let $p^*$ be the largest fixed point of the function  $\phi_{d,r,k}$ from (\ref{eqmain}).
Then $\lim_{n\to\infty}\Lambda_{d,r,k,n}=\atom_{\thet_{d,r,k,p^*}}$.
\end{theorem}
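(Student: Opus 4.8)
The plan is to derive Theorem~\ref{Thm_lwc} from Theorem~\ref{Thm_main} by a routine translation between the ``empirical-count'' formulation and the language of weak convergence on $\cP^2(\fF^{\{0,1\}})$. Concretely, I would first recall that the topology on $\fG^{\cX}$ (hence on the subspace $\fF^{\{0,1\}}$) is generated by the indicator maps $\kappa_{G,s}$ of \eqref{topofct}, so that a sequence $\mu_n\in\cP(\fF^{\{0,1\}})$ converges weakly to $\mu$ iff $\mu_n(\kappa_{G,s}^{-1}(1))\to\mu(\kappa_{G,s}^{-1}(1))$ for every finite rooted marked factor graph $G$ and every $s\ge 0$; this is just the standard fact that weak convergence is tested on a generating family of continuous bounded functions, together with the observation that $\fF^{\{0,1\}}$ with this topology is a separable metrizable (indeed Polish) space, since for each $s$ there are only countably many isomorphism classes of radius-$s$ balls. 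The same criterion, applied one level up, characterizes convergence in $\cP^2(\fF^{\{0,1\}})$: it suffices to evaluate against the maps $\lambda\mapsto\lambda(\kappa_{G,s}^{-1}(1))$, which are continuous on $\cP(\fF^{\{0,1\}})$ by definition of the weak topology.

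With this in hand, the second step is bookkeeping. By definition, $\lambda_{k,\vF}\big(\kappa_{\tau,\omega}^{-1}(1)\big)=\frac1{|\cV(\vF)|}\sum_{v\in\cV(\vF)}\vecone\{\partial^\omega[\vF_v,v,\sigma_{k,\vF}]=\partial^\omega[\tau]\}$, and since $|\cV(\vF)|=n$ deterministically, this is exactly the random variable appearing in Theorem~\ref{Thm_main}. Likewise $\thet_{d,r,k,p^*}(\kappa_{\tau,\omega}^{-1}(1))=\pr[\partial^\omega[\T(d,r,k,p^*)]=\partial^\omega[\tau]]$. Thus Theorem~\ref{Thm_main} says precisely that for every finite rooted marked acyclic factor graph $\tau$ and every $\omega\ge 0$,
\begin{equation*}
\lambda_{k,\vF}\big(\kappa_{\tau,\omega}^{-1}(1)\big)\ \xrightarrow{\ \pr\ }\ \thet_{d,r,k,p^*}\big(\kappa_{\tau,\omega}^{-1}(1)\big).
\end{equation*}
(For non-acyclic $\tau$ both sides are eventually $0$ for $n$ large, since the typical radius-$\omega$ neighbourhood in $\vF$ is acyclic \whp, and $\T$ is acyclic by construction; so those $\tau$ contribute nothing.) Because the family $\{\kappa_{\tau,\omega}\}$ separates points and generates the topology, a standard portmanteau-type argument upgrades this countable family of one-dimensional convergences in probability to the statement that the random measure $\lambda_{k,\vF}\in\cP(\fF^{\{0,1\}})$ converges in probability, with respect to the weak topology, to the deterministic measure $\thet_{d,r,k,p^*}$.

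Finally, I would pass from convergence in probability of the $\cP(\fF^{\{0,1\}})$-valued random variable $\lambda_{k,\vF}$ to convergence in $\cP^2(\fF^{\{0,1\}})$ of its law pushed to a point mass. Since $\Lambda_{d,r,k,n}=\Erw_{\vF}[\atom_{\lambda_{k,\vF}}]=\cL(\lambda_{k,\vF})$ viewed as an element of $\cP^2(\fF^{\{0,1\}})$, and since $\lambda_{k,\vF}\to\thet_{d,r,k,p^*}$ in probability, the laws $\cL(\lambda_{k,\vF})$ converge weakly to the Dirac mass $\atom_{\thet_{d,r,k,p^*}}$: convergence in probability to a constant always implies convergence in distribution to that constant, in any metric space. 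Testing against the continuous evaluation maps $\lambda\mapsto\lambda(\kappa_{G,s}^{-1}(1))$ makes this explicit and closes the argument. The only genuinely non-cosmetic point — and hence the ``main obstacle,'' though it is a mild one — is verifying that the maps $\kappa_{G,s}$ really do furnish a convergence-determining class on the Polish space $\fF^{\{0,1\}}$, i.e. that testing these indicators suffices to pin down weak limits of measures; this follows from the fact that they are continuous, that finite Boolean combinations of them separate points, and that the space is second countable, so no further topological subtleties intervene.
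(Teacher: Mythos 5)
Your proposal derives \Thm~\ref{Thm_lwc} from \Thm~\ref{Thm_main}, but this is circular in the context of this paper: as announced in \S~\ref{sec_results}, the paper proves the logical dependence in the \emph{opposite} direction — \Thm~\ref{Thm_main} is \emph{derived from} \Thm~\ref{Thm_lwc} in \S~\ref{sec_mainproof}, by the very same routine translation along the functions $\kappa_{\tau,s}$ that you are invoking. So your argument, while correctly demonstrating that the two statements are essentially equivalent once the topological setup (Polish space, $\kappa_{G,s}$ as a convergence-determining family of clopen indicators) is unwound, produces no actual theorem: it assumes as known the one thing whose proof rests entirely on what you are trying to prove.

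What is actually missing — and it is the entire substance of the paper — is an independent proof of either statement. The paper's proof of \Thm~\ref{Thm_lwc} combines \Prop~\ref{Prop_main} with \Lem s~\ref{Lemma_sizeless}, \ref{Lemma_topdown}, \ref{Lemma_decorated} and~\ref{Lem_id}. \Prop~\ref{Prop_main} shows that $\lim_{n}\Lambda_{d,r,k,n}=\atom_{\theta_{d,r,k}}$ where $\theta_{d,r,k}=\lim_{t\to\infty}\cL([\T_t(d,r,k)])$; this requires the local weak convergence of $\vF$ to $\T(d,r)$ (\Thm~\ref{Thm_GW}), the monotone convergence of Warning Propagation to the $k$-core indicator (\Lem~\ref{Lemma_WP}), and the fact that a bounded number of WP iterations already overestimates the core by at most $\eps n$ vertices \whp\ (\Lem~\ref{Lemma_eps}), the last fed by the fixed-point analysis of \Lem s~\ref{Prop_fix} and~\ref{Lemma_as}. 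The remaining chain of lemmas in \S 4 then identifies the bottom-up WP limit $\theta_{d,r,k}$ with the top-down branching process law $\thet_{d,r,k,p^*}$, via the independent $\Be(p^*)$ boundary messages and the case analysis on the nine node types. None of this appears in your proposal; the ``main obstacle'' you flag (that $\{\kappa_{G,s}\}$ is convergence-determining) is a standard soft point, whereas the real obstacle is establishing the probabilistic content.
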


In \S~\ref{sec_mainproof} we shall derive  \Thm~\ref{Thm_main} from \Thm~\ref{Thm_lwc}.

\subsection{Related work}
Some structural results on the $k$-core problem
in random hypergraphs follow
from previous analysis, in particular
 from the study of random constraint satisfaction problems.
In \cite{Dembo}
 Montanari and Dembo determined the fluctuation in the size of
the $k$-core near the threshold $d_{r,k}$. Furthermore, they determined the probability that $\vH$ has a non-empty $k$-core as $d$ approaches $d_{r,k}.$  In \cite{MolloyAchlioptas} Molloy and Achlioptas studied the peeling process that gives the $k$-core and
proved that every vertex that is not in the $k$-core can be removed by a sequence of at most $O(\log n)$ steps of the peeling process.
The Poisson-Cloning model \cite{Kim} enables to describe the local
internal structure of the $k$-core in the
sparse random hypergraph by a 
simple Galton Watson process.
In \cite{Cooper} Cooper studied the $k$-core of random hypergraphs with some given degree 
sequence and bounded maximal degree. Cooper derived the asymptotic distribution of vertices with a given number of neighbours in the $k$-core and in the mantle.

In a recent paper we used the characterization of the $k$-core by means 
of Warning Propagation to
study the $k$-core problem in sparse random graphs \cite{kcore}. The proof strategy
of the present paper is to transfer 
the analysis from \cite{kcore} to the 
factor graph representation of 
hypergraphs.
The essential difference in the graph case 
is that in the Warning Propagation
description there is no exchange of information between vertices and edges in the graph. This is illustrated in the fact that there is only one type of Warning Propagation messages in that case. The factor graph notion enables us to describe such communication and adopt ideas 
from \cite{kcore} to the hypergraph 
setting. 
In the hypergraph case, the fact that the Warning Propagation
description needs two distinct types of  messages is expressed in the two phases of the branching process in our main result, see \Thm~\ref{Thm_main},
Figure~\ref{Fig_g} and Figure~\ref{Fig_f}.

In the context of the study of random constraint satisfaction problems, a similar approach has been used
in earlier work on the $k$-core 
problem in hypergraphs.
In~\cite[proof of \Lem~6]{Molloy} Molloy considers a deletion process on the random hypergraph which corresponds to the Warning Propagation
description of the $k$-core. His analysis to determine the $k$-core threshold leads him to the study of a limit 
similar to~\Lem~\ref{Lemma_as}. 
Furthermore, Ibrahimi,
Kanoria, Kraning and Montanari \cite{Ibrahimi} used
the connection of Warning Propagation
and the $k$-core problem 
  to describe the local structure
of the $2$-core in random hypergraphs 
by the limit
of Warning Propagation on a suitable Galton Watson
tree. 
A key technical ingredient
of the present paper is to
further investigate the connection between Warning Propagation and the fixed point problem from \S~\ref{sec_thresh}. This connection
is essential to combine Warning Propagation with the local weak convergence result in order to exhibit a branching process that describes the $k$-core together with the mantle. To be precise, 
by contrast with \cite{Ibrahimi}
we describe the local structure of the
random hypergraph together with the 
colouring that indicates the $k$-core
membership by means of a random tree
that proceeds in a ``top-down"
fashion. In particular, the colours are constructed as the 
tree evolves rather than a posteriori
by applying Warning Propagation to the
random tree. This description can not be
extracted from previous analysis.

\section{Preliminaries}
\subsection{Notation}
For a graph $G$ and a vertex $v\in V(G)$, $\partial_Gv$ denotes the set of all vertices adjacent to $v$ in $G$
and $G_v$ denotes the component 
of $v$ in $G$.
Furthermore, we denote 
by $\partial^s(G,v)$ the subgraph of $G$ induced on the set of vertices at distance at most $s$ from $v$. 
We abbreviate $\partial (G,v) = \partial^1(G,v)$ and denote by $\partial v$ the neighbourhood of $v$ in $G$ whenever $G$ is clear from the context.

By a {\em rooted graph} we mean a connected locally finite graph $G$ together with a distinguished root $v_0\in V(G)$.
A {\em child} of $v\in V(G)$ is a vertex $u\in\partial v$ whose distance from $v_0$ is greater than that of $v$.
For a vertex $v$ of a rooted graph $G$ we denote by $\partial_+v$ the set of all children of $v$.

We say that $(G,v_0)$ and $(G',v_0')$ are isomorphic if there is an isomorphism $\pi:G\to G'$ 
such that $\pi(v_0)=\pi(v_0')$.
We denote by $[G,v_0]$ the isomorphism class of $(G,v_0)$. Furthermore, we let $\fG$ be the set of all isomorphism classes of rooted graphs.
Finally, let $\partial^s[G,v_0]$ be the isomorphism class of $\partial^s(G,v_0)$.
For factor graphs we will assume that $v_0\in\cV(F)$ and denote 
by $\fF$ the set of all isomorphism 
classes of rooted factorgraphs.

If $X$ is a random variable
on a space $\cE$, we let $\cL(X)$ denote the distribution of $X$. That is, 
$\cL(X)$ is a probability distribution 
on $\cE.$
For real numbers $y,z>0$ we let $\Po_{\geq z}(y)$ denote the Poisson distribution $\Po(y)$ conditioned on the event that it attains a value least $z$.
Thus, 
	$$\pr\brk{\Po_{\geq z}(y)=x}=\frac{\vecone\cbc{x\geq z}y^x\exp(-y)}{x!\pr\brk{\Po(y)\geq z}}\qquad\mbox{for any integer $x\geq0$}.$$
Similarly, we let $\Bin_{\geq z}(m,y)$ denote the binomial distribution $\Bin(m,y)$ conditioned on the event that the outcome is at least $z$. That is,
       $$\pr\brk{\Bin_{\geq z}(m,y)=x}=\frac{\vecone\cbc{x\geq z}\bink{m}{x}y^x(1-y)^{m-x}}{\pr\brk{\Bin(m,y)\geq z}}\qquad\mbox{for any integer $0\leq x\leq m$}.$$
The distributions $\Po_{> z}(y)$, $\Po_{\leq z}(y)$, $\Po_{< z}(y)$,  $\Bin_{> z}(m,y)$, $\Bin_{\leq z}(m,y)$, $\Bin_{< z}(m,y)$ are defined analogously. 
Throughout the paper we will define various probability distributions and random factor graphs. We provide tables of these definitions in the appendix.

\subsection{The local structure} 
In what follows it will be 
convenient to work with random 
factor graphs rather than their isomorphism
classes. In order to  obtain
a representative of the corresponding
isomorphism
class, from now 
on we will assume that any factor 
graph is additionally equipped with 
almost surely distinct random labels 
at variable and factor nodes. 
This labelling  will be technically necessary in upcoming proofs, but
the actual label of a node will not be taken into account in any of them.

For instance, 
recall that $c=c(d,r)=d/(r-1)!$ and consider the following random recursive acyclic factor graph. The process starts with a variable node $v_0$, the root.
Starting at $v_0$ every variable node has a 
$\Po(c)$ number of factor node children independently and each factor node has $r-1$ variable node children deterministically. Let
$\T(d,r)$ 
denote the
random tree which is obtained by labelling each nodes in this process with a number in $[0,1]$ independently and uniformly at random.
In the following we need to prove that $\vF$ ``locally" converges to $[\T(d,r)]$
in the notion of local weak convergence.

To this end, we endow $\fG$ with the coarsest topology that makes all the maps $\kappa_{G,s}:\fG\rightarrow\{0,1\},~G'\mapsto\vecone\cbc{\partial^sG'=\partial^sG}$ for fixed
$s\geq 0, G\in\fG$ continuous. Let $\cP(\fG)$ be the set of all probability measures on $\fG$ together with the weak topology. Finally, let $\cP^2(\fG)$
be the set of all probability measures on $\cP(\fG)$ also together with the weak topology. For a given factor graph $F$ 
on $n$ variable nodes
let
$$\lambda_{F}=\frac{1}{n}\sum_{v\in\cV(F)}\atom_{[F_v,v]}\in\cP(\fF).$$
Further, let
\begin{equation}\label{eqLambdadn}
\Lambda_{d,r,n}=\Erw_{\vF}[\atom_{\lambda_{\vF}}]\in\cP^2(\fF).
\end{equation}

Then the following theorem expresses that ''locally`` $\vF$ converges to $\T(d,r)$. 
\begin{theorem}\label{Thm_GW}
For any $r\geq 3$ and $d>0$, we have $\lim_{n\to\infty}\Lambda_{d,r,n}=\atom_{\cL([\T(d,r)])}$.
\end{theorem}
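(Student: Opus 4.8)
The plan is to follow the standard route to a local weak convergence statement of this kind: first establish convergence of the first moment, namely of the expected empirical neighbourhood distribution $\Erw_{\vF}[\lambda_{\vF}]$; then establish concentration of the random measure $\lambda_{\vF}$ about its mean via a second moment computation; and finally lift the resulting convergence in probability to the asserted identity in $\cP^2(\fF)$. For the first step, since the clopen cylinders $\cbc{G'\in\fF:\partial^sG'=\partial^s[H]}$ (over finite rooted factor graphs $H$ and $s\geq 0$) form a $\pi$-system that generates the topology on $\fF\subset\fG$ and a convergence-determining class for the weak topology on $\cP(\fF)$, it suffices to prove that for a uniformly random variable node $\vec v$ of $\vF$,
	\begin{equation*}
	\pr\brk{\partial^s[\vF_{\vec v},\vec v]=\partial^s[H]}\longrightarrow\pr\brk{\partial^s[\T(d,r)]=\partial^s[H]}\qquad(n\to\infty)
	\end{equation*}
for every fixed $s$ and $H$.

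I would prove this by a breadth-first exploration of $\partial^s(\vF,\vec v)$ coupled with the construction of $\T(d,r)$. The factor nodes incident to a given variable node $u$ correspond precisely to the $(r-1)$-element subsets $a\subseteq[n]\setminus\cbc{u}$ with $a\cup\cbc{u}$ an edge of $\vH$, so the number of factor nodes incident to $u$ is $\Bin(\bink{n-1}{r-1},p)$; since $\bink{n-1}{r-1}p=\bink{n-1}{r-1}d/n^{r-1}\to d/(r-1)!=c$, this count converges in distribution to $\Po(c)$. The presence of the (at most one) edge joining $u$ to its parent in the search tree is independent of all other potential edges at $u$, so a non-root variable node has, asymptotically, a $\Po(c)$-distributed number of child factor nodes, just as the root does and just as in $\T(d,r)$; and each discovered factor node contributes exactly its remaining $r-1$ vertices as variable-node children, again matching $\T(d,r)$. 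The offspring counts at distinct explored nodes are asymptotically independent. The only way the explored ball can fail to be a tree isomorphic to $\partial^s\T(d,r)$ is a collision, i.e.\ two discovered edges meeting outside the search tree; truncating the exploration once it has reached $M$ vertices (an event of probability $1-\eps(M)$ uniformly in $n$, with $\eps(M)\to0$ as $M\to\infty$), a union bound bounds the collision probability by $O(M^2/n)$. Letting $n\to\infty$ and then $M\to\infty$ gives the displayed convergence.

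For the concentration step it suffices to show that, for fixed $s$ and $H$, the variance of $X_{s,H}:=n^{-1}\sum_{v\in\cV(\vF)}\vecone\cbc{\partial^s[\vF_v,v]=\partial^s[H]}$ tends to $0$. Expanding $\Erw[X_{s,H}^2]$ over ordered pairs of variable nodes, the diagonal contributes $O(1/n)$; for an ordered pair of distinct variable nodes $v\neq w$, after the same truncation the two depth-$s$ balls have $O(1)$ vertices and are vertex-disjoint with probability $1-O(1/n)$, and on disjointness the two explorations probe disjoint families of potential edges, so the two indicator events are asymptotically independent. Hence $\Erw[X_{s,H}^2]=\Erw[X_{s,H}]^2+o(1)$ and $\Var(X_{s,H})\to0$; combined with the first step this shows $\lambda_{\vF}\to\cL([\T(d,r)])$ in probability with respect to the weak topology on $\cP(\fF)$.

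Finally, $\fG$ equipped with the topology generated by the maps $\kappa_{G,s}$ embeds into a countable product of two-point spaces, hence is separable and metrizable, so $\cP(\fG)$ with its weak topology is metrizable as well; and if a sequence of random elements of a metric space converges in probability to a deterministic limit, then their laws converge weakly to the Dirac mass at that limit (apply dominated convergence to $\Erw[f(\lambda_{\vF})]$ for bounded continuous $f$). Applied to $\lambda_{\vF}$ this is exactly the assertion $\Lambda_{d,r,n}=\Erw_{\vF}[\atom_{\lambda_{\vF}}]\to\atom_{\cL([\T(d,r)])}$. The only genuinely delicate point is the exploration bookkeeping in the second paragraph: making the ``no collisions'' estimate uniform over the $O(1)$-sized ball and verifying that the offspring laws it produces assemble into precisely the recursive product structure defining $\T(d,r)$; the second moment computation reuses the same disjointness bound, and the passage to $\cP^2$ is soft.
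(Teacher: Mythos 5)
Your proposal is correct and follows essentially the same route as the paper's appendix proof: control the expected fraction of vertices whose depth-$s$ ball matches a given finite tree via a local exploration coupled to $\T(d,r)$, establish concentration via asymptotic independence of disjoint depth-$s$ balls and Chebyshev, and lift to $\cP^2(\fF)$ using metrizability of the weak topology. The paper organises the first-moment step as an induction on even depth $s=2t$ and pre-establishes a bounded-short-cycles lemma, whereas you use a truncated BFS with an $O(M^2/n)$ collision bound, but these are packaging differences of the same argument.
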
 
We provide a proof of \Thm~\ref{Thm_GW} in the appendix.

\subsection{The $k$-core threshold}\label{sec_thresh}
We build upon the following result that determines the $k$-core threshold and the asymptotic size of the $k$-core. Recall that 
$\vF$ is the random factor graph which is constructed from $\vH$ and that $\cC_k(\vF)$ denotes the set of variable nodes in the $k$-core of $\vF$

\begin{theorem}[{\cite{Molloy}}]\label{Thm_Mol}
Let $r\geq 3$ and $k\geq 2$. Then the function  $\lambda\in(0,\infty)\mapsto\lambda(r-1)!/(\pr\brk{\Po(\lambda)\geq k-1})^{r-1}$ is continuous, 
tends to infinite as $\lambda\to0$ or $\lambda\to\infty$,
and has a unique local minimum, where it attains the value
	$$d_{r,k}=\inf\cbc{\lambda(r-1)!/(\pr\brk{\Po(\lambda)\geq k-1})^{r-1}:\lambda>0}.$$
Furthermore, for any $d>d_{r,k}$ the equation $d=\lambda(r-1)!/(\pr\brk{\Po(\lambda)\geq k-1})^{r-1}$ has precisely two solutions.
Let $\lambda_{r,k}(d)$ denote the larger one and define
	\begin{align*}
	\psi_{r,k}&:(d_{r,k},\infty)\to(0,\infty),\qquad d\mapsto\pr\brk{\Po(\lambda_{r,k}(d))\geq k}.
	\end{align*}
Then $\psi_{r,k}$ is a strictly increasing continuous function.
Moreover, if $d<d_{r,k}$, then $\cC_k(\vF)=\emptyset$ \whp,
while $\frac1n|\cC_k(\vF)|$ converges to $\psi_{r,k}(d)$ in probability for $d>d_{r,k}$.
\end{theorem}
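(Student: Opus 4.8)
To establish \Thm~\ref{Thm_Mol} I would proceed in two independent parts: a self-contained analytic statement about the function $f(\lambda):=\lambda(r-1)!/\bigl(\pr\brk{\Po(\lambda)\geq k-1}\bigr)^{r-1}$, and the probabilistic statement about the $k$-core of $\vH$, for which I would follow the configuration-model peeling analysis of \cite{Molloy} (in the spirit of \cite{Pittel}).

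For the analytic part, put $P(\lambda)=\pr\brk{\Po(\lambda)\geq k-1}$; the telescoping identity for Poisson tails gives $P'(\lambda)=e^{-\lambda}\lambda^{k-2}/(k-2)!=(k-1)\pr\brk{\Po(\lambda)=k-1}/\lambda$. Continuity of $f$ on $(0,\infty)$ is immediate, and $P(\lambda)\sim\lambda^{k-1}/(k-1)!$ as $\lambda\to0$ together with $P(\lambda)\to1$ as $\lambda\to\infty$ forces $f(\lambda)\to\infty$ at both ends, since $(k-1)(r-1)\geq2$. Differentiating, $(\log f)'(\lambda)=\lambda^{-1}\bigl(1-(r-1)/\rho(\lambda)\bigr)$ with $\rho(\lambda):=P(\lambda)/\bigl(\lambda P'(\lambda)\bigr)$, and the crucial point is the power series identity
\[
\rho(\lambda)=\frac{\pr\brk{\Po(\lambda)\geq k-1}}{(k-1)\,\pr\brk{\Po(\lambda)=k-1}}=\sum_{i\geq0}\frac{(k-2)!}{(k-1+i)!}\,\lambda^{i},
\]
which displays $\rho$ as strictly increasing from $\rho(0^{+})=1/(k-1)<r-1$ to $\rho(\infty)=\infty$. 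Hence $\rho(\lambda)=r-1$ has a unique root $\lambda_0$, $f$ strictly decreases on $(0,\lambda_0)$ and strictly increases on $(\lambda_0,\infty)$, so $f$ has a unique local (hence global) minimum, of value $d_{r,k}=\inf f$, and $f(\lambda)=d$ has precisely two solutions for every $d>d_{r,k}$, one on each monotone branch. On the increasing branch the inverse function theorem makes the larger root $\lambda_{r,k}(d)$ a strictly increasing continuous function of $d$, and composing with the strictly increasing continuous map $\lambda\mapsto\pr\brk{\Po(\lambda)\geq k}$ shows $\psi_{r,k}$ is strictly increasing and continuous.

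For the probabilistic part I would first fix the value of the limit by a Warning Propagation computation on the Galton--Watson tree $\T(d,r)$ of \Thm~\ref{Thm_GW}: calling a factor node with a fixed parent \emph{open} when each of its $r-1$ children has at least $k-1$ open factor children, iteration from the leaves shows that the probability $\pi$ that a factor node is open satisfies $\pi=\bigl(\pr\brk{\Po(c\pi)\geq k-1}\bigr)^{r-1}$; positive solutions correspond to the roots $\lambda=c\pi$ of $f(\lambda)=d$, the largest being $\pi^{*}$ with $c\pi^{*}=\lambda_{r,k}(d)$, and the root of $\T(d,r)$ lies in the putative core precisely when it has at least $k$ open factor children, which has probability $\pr\brk{\Po(c\pi^{*})\geq k}=\psi_{r,k}(d)$. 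The upper bound $\tfrac1n|\cC_k(\vF)|\leq\psi_{r,k}(d)+o(1)$ \whp\ then follows because, for each fixed $T$, the set of vertices surviving $T$ rounds of parallel peeling contains $\cC_k(\vF)$ while its normalised size is a bounded continuous functional of the radius-$2T$ neighbourhoods, hence converges in probability---by \Thm~\ref{Thm_GW}---to the probability that the root of $\T(d,r)$ survives $T$ rounds; letting $T\to\infty$ this probability decreases to $\psi_{r,k}(d)$. When $d<d_{r,k}$ one instead bounds the expected number of vertex sets $S$ for which $\vH\brk{S}$ has minimum degree at least $k$ directly; this first moment vanishes exactly when $d<d_{r,k}$ (this is the optimisation that produces $d_{r,k}$), giving $\cC_k(\vF)=\emptyset$ \whp.

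The hard direction is the matching lower bound for $d>d_{r,k}$, where $\pi^{*}>0$. The plan is to pass to the configuration multigraph on the essentially $\Po(c)$-distributed degree sequence of $\vH$ (transferring the conclusion back to $\vH$ via contiguity on the relevant event) and to run the peeling process one vertex at a time, removing at each step a uniformly random vertex of current degree below $k$. Wormald's differential equation method then applies: one tracks the numbers of surviving vertices of each degree below $k$ together with the number of surviving half-edges, shows that after rescaling by $n$ these follow a deterministic ODE system whose solution is a time change of the truncated-Poisson family, and locates the point at which the ``deletable mass'' stays bounded away from zero, so the process jams. The jamming condition reproduces the fixed-point equation, with the residual parameter pinned to the \emph{larger} root $\lambda_{r,k}(d)$ because the trajectory sweeps past the unstable branch and halts at the stable one; the surviving subgraph then has $(\psi_{r,k}(d)+o(1))n$ vertices and, by construction, minimum degree $k$. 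Concentration---promoting convergence in expectation to convergence in probability---comes from the error bounds in the differential equation method, or alternatively from an Azuma estimate for the number of vertices peeled away along the vertex-exposure filtration. I expect this analysis of the peeling trajectory near its endpoint, and the identification of that endpoint with $\lambda_{r,k}(d)$, to be the main obstacle; the analytic part and the upper and empty-core bounds are comparatively routine.
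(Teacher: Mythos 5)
The paper does not prove \Thm~\ref{Thm_Mol} at all: the theorem header cites \cite{Molloy}, and in \S\ref{sec_intro} the author explicitly points to~\cite[proof of Lemma~6]{Molloy} as the source of the deletion-process analysis underlying this result. There is therefore no in-paper proof to compare your attempt against; what you have written is a stand-alone reconstruction of Molloy's argument. As such it is broadly on target. The analytic part is clean and correct: writing $(\log f)'(\lambda)=\lambda^{-1}\bigl(1-(r-1)/\rho(\lambda)\bigr)$ with $\rho(\lambda)=P(\lambda)/(\lambda P'(\lambda))=\sum_{i\geq0}\frac{(k-2)!}{(k-1+i)!}\lambda^i$ exhibits $\rho$ as strictly increasing from $1/(k-1)<r-1$ to $\infty$, giving the unique interior minimum, the two-root structure of $f(\lambda)=d$, and the strict monotonicity and continuity of $\psi_{r,k}$ by the inverse function theorem on the right branch. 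The probabilistic outline (truncated Warning Propagation on $\T(d,r)$ via \Thm~\ref{Thm_GW} for the upper bound; configuration-model peeling and the differential equation method jamming at the larger root for the lower bound) is the route Molloy and Pittel--Spencer--Wormald actually take, so the architecture is right. Note also that your fixed point $\pi=\bigl(\pr\brk{\Po(c\pi)\geq k-1}\bigr)^{r-1}$ satisfies $\pi^*=(p^*)^{r-1}$ with $p^*$ as in \Lem~\ref{Prop_fix}, consistent with how the paper later uses the result.

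The one step I would push back on is the subcritical case. You assert that for $d<d_{r,k}$ a first-moment count over all vertex subsets $S$ with $\vH[S]$ of minimum degree $\geq k$ gives zero expectation ``exactly when $d<d_{r,k}$.'' That is not obviously true, and it is not how this direction is established in the cited sources: the first moment over all scales of $|S|$ is not manifestly tight at the threshold $d_{r,k}$ (the optimisation that produces $d_{r,k}$ comes from the fixed-point/peeling analysis, not from a union bound over subsets). Molloy instead shows that in the subcritical regime the peeling trajectory runs to extinction before any jam, and (combined with concentration for the peeling process, or with the $O(\log n)$-round bound from \cite{MolloyAchlioptas}) concludes that no vertices survive. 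You should either carry the ODE analysis through to the point where all half-edges are removed when $d<d_{r,k}$, or supply the computation showing your subset first moment really is $o(1)$ throughout $(0,d_{r,k})$; as written this is a gap.
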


The following lemma establishes a connection between Warning Propagation and the formula for the size of the $k$-core from \Thm~\ref{Thm_Mol}.
Similar statements are implicit in~\cite{MolloyCores,Riordan}. Nevertheless, we provide a simple proof similar to the one in 
\cite{kcore} to keep the present paper self-contained.

\begin{lemma}\label{Prop_fix}
Suppose $d>d_{r,k}$
and let $p^*$ be the largest fixed point of 
the function $\phi_{d,r,k}$ from (\ref{eqmain}).
Then $\phi_{d,r,k}$ is contracting on the interval $[p^*,1]$.
Moreover, $\psi_k(d)= \phi_{d,r,k+1}(p^*).$
\end{lemma}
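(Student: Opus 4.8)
The plan is to analyze the function $\phi=\phi_{d,r,k}$ directly, exploiting its explicit form as a composition of the Poisson tail $p\mapsto\pr[\Po(cp^{r-1})\geq k-1]$ (where $c=d/(r-1)!$) with the power map $p\mapsto p^{r-1}$. First I would record that $\phi(0)=0$ (since for $k\geq 2$ the event $\{\Po(0)\geq k-1\}$ has probability $0$) and $\phi$ is increasing and real-analytic on $[0,1]$, hence it has a largest fixed point $p^*$, and $\phi(1)\leq 1$ forces $\phi(p)\geq p$ for no $p>p^*$, i.e. $\phi(p)<p$ on $(p^*,1]$ while $\phi(p)\geq p$ need not hold below. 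To get \emph{contraction} on $[p^*,1]$ — meaning $\phi$ is a contraction, so $|\phi(p)-\phi(p')|\leq\gamma|p-p'|$ for some $\gamma<1$, equivalently $\sup_{[p^*,1]}\phi'<1$ — the key is to compare the fixed-point equation with the characterization of $\lambda_{r,k}(d)$ in Theorem~\ref{Thm_Mol}. Writing $\lambda=\lambda(p)=cp^{r-1}$ and $g(\lambda)=\pr[\Po(\lambda)\geq k-1]$, the fixed point equation $\phi(p)=p$ reads $g(cp^{r-1})=p$, i.e. $p=g(\lambda)$ and $\lambda=cp^{r-1}=c\,g(\lambda)^{r-1}$, which is exactly $d=\lambda(r-1)!/g(\lambda)^{r-1}$. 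So fixed points of $\phi$ correspond bijectively to solutions $\lambda$ of Molloy's equation, and the \emph{largest} fixed point $p^*$ corresponds to the \emph{larger} root $\lambda_{r,k}(d)$.

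Next I would compute $\phi'(p)$ via the chain rule. Using the standard identity $\frac{\dd}{\dd\lambda}\pr[\Po(\lambda)\geq k-1]=\pr[\Po(\lambda)= k-2]$ (a telescoping of the Poisson pmf; for $k=2$ this is $e^{-\lambda}$), we get
\begin{equation*}
\phi'(p)=\pr[\Po(cp^{r-1})=k-2]\cdot c(r-1)p^{r-2}.
\end{equation*}
At the fixed point $p=p^*$, $\lambda^*=cp^{*\,r-1}=\lambda_{r,k}(d)$ and $cp^{*\,r-2}=\lambda^*/p^*$, so
\begin{equation*}
\phi'(p^*)=(r-1)\,\frac{\lambda^*\,\pr[\Po(\lambda^*)=k-2]}{p^*}=(r-1)\,\frac{\lambda^*\,\pr[\Po(\lambda^*)=k-2]}{\pr[\Po(\lambda^*)\geq k-1]}.
\end{equation*}
Now the condition that $\lambda^*$ is the \emph{larger} of the two roots of $d=\lambda(r-1)!/g(\lambda)^{r-1}$ translates, via a derivative computation on $h(\lambda):=\lambda/g(\lambda)^{r-1}$, precisely into the statement that $h$ is increasing at $\lambda^*$ (the larger root lies on the increasing branch past the unique local minimum from Theorem~\ref{Thm_Mol}), i.e. $h'(\lambda^*)\geq 0$, which after differentiating and simplifying is exactly $1-(r-1)\lambda^*\pr[\Po(\lambda^*)=k-2]/g(\lambda^*)\geq 0$, that is $\phi'(p^*)\leq 1$. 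A short separate argument (using that $\lambda^*$ is not the minimizing $\lambda$ since $d>d_{r,k}$ strictly) upgrades this to $\phi'(p^*)<1$. To pass from $\phi'(p^*)<1$ to $\sup_{[p^*,1]}\phi'<1$ I would show $\phi'$ is monotone on $[p^*,1]$, or more robustly that any point $p\in(p^*,1]$ with $\phi'(p)\geq 1$ would, combined with $\phi(p)<p$ and $\phi(1)\leq 1$, force another fixed point in $(p,1]$, contradicting maximality of $p^*$; this convexity/fixed-point trick is the cleanest route and mirrors the graph case in \cite{kcore}.

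Finally, for the identity $\psi_k(d)=\phi_{d,r,k+1}(p^*)$: by definition $\psi_{r,k}(d)=\pr[\Po(\lambda_{r,k}(d))\geq k]$, and $\phi_{d,r,k+1}(p^*)=\pr[\Po(cp^{*\,r-1})\geq k]=\pr[\Po(\lambda^*)\geq k]$; so the whole content is the identification $\lambda^*=\lambda_{r,k}(d)$, i.e. that the larger fixed point of $\phi_{d,r,k}$ produces the larger root of Molloy's equation, which was established in the first paragraph. I would just note that both $\phi_{d,r,k}$ and $\phi_{d,r,k+1}$ feed the \emph{same} argument $cp^{r-1}$ into a Poisson tail, so evaluating at the common fixed point $p^*$ of the $k$-indexed map and reading off the $(k+1)$-indexed tail is immediate.

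\textbf{Main obstacle.} The routine part is the chain-rule computation of $\phi'$ and the telescoping Poisson identity. The genuine content — and the step I expect to be most delicate — is establishing $\sup_{[p^*,1]}\phi'<1$: identifying $\phi'(p^*)<1$ with the fact that $\lambda_{r,k}(d)$ is the larger root (i.e. lies strictly on the increasing branch of $h(\lambda)=\lambda(r-1)!/g(\lambda)^{r-1}$), and then ruling out $\phi'\geq 1$ elsewhere on $[p^*,1]$ via the maximality of $p^*$. Handling the boundary case $p^*=1$ or $k=2$ (where some conditional probabilities degenerate) and checking the strictness coming from $d>d_{r,k}$ rather than $d\geq d_{r,k}$ also needs a little care.
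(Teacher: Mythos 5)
Your plan splits into three moves: (i) translate the fixed-point equation $\phi_{d,r,k}(p)=p$ into Molloy's equation $d=\lambda(r-1)!/\varphi_k(\lambda)^{r-1}$ via $\lambda=cp^{r-1}$, identify $p^*$ with the larger root $\lambda_{r,k}(d)$, and read off $\psi_{r,k}(d)=\phi_{d,r,k+1}(p^*)$; (ii) compute $\phi'(p^*)=(r-1)\lambda^*\pr[\Po(\lambda^*)=k-2]/\pr[\Po(\lambda^*)\geq k-1]$ via the Poisson telescoping identity and show it equals $1$ minus a positive multiple of $h'(\lambda^*)$ where $h(\lambda)=\lambda(r-1)!/\varphi_k(\lambda)^{r-1}$, hence $\phi'(p^*)<1$ because $\lambda^*$ sits strictly on the increasing branch of $h$; (iii) promote this endpoint bound to $\sup_{[p^*,1]}\phi'<1$. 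Steps (i) and the identity $\psi_{r,k}(d)=\phi_{d,r,k+1}(p^*)$ are exactly what the paper does. Step (ii) is a genuinely different and quite clean route to the endpoint inequality: the paper never evaluates $\phi'(p^*)$ at all, instead ruling out the tangency case by a perturbation-in-$d$ argument. If written out, your $h'$ computation would be a nice shortcut.

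The gap is in step (iii). Your preferred ``more robust'' route --- that $\phi'(p)\geq 1$ at some $p\in(p^*,1]$ together with $\phi(p)<p$ and $\phi(1)\leq 1$ forces another fixed point in $(p,1]$ --- is false as an abstract claim: an increasing $C^1$ map strictly below the diagonal on $(p^*,1]$ can still have $\phi'\geq 1$ on a subinterval (take $\phi(p)=p-\tfrac1{10}+\tfrac1{50}\sin(20p)$, say). To make step (iii) go through you genuinely need the structural fact that $\phi''$ vanishes exactly once in $[0,1]$, so that $\phi$ is convex then concave; combined with the transversal crossing at $p^*$ and $\phi(1)<1$ this forces $p^*$ past the inflection point, i.e.\ $\phi$ concave and $\phi'$ monotonically decreasing on $[p^*,1]$, whence $\sup_{[p^*,1]}\phi'=\phi'(p^*)<1$. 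That is exactly the second-derivative computation the paper carries out (explicitly writing down $\varphi_k''$ and solving $\phi''(p_0)=0$); your alternative ``show $\phi'$ is monotone on $[p^*,1]$'' points in the right direction but is not argued. You should also record that $p^*>0$ so the trivial fixed point at $0$ does not interfere; the paper derives this from the strict monotonicity of $\lambda_{r,k}$ on $(d_{r,k},\infty)$, and your $h$-framework gives it for free once you note $\lambda^*=\lambda_{r,k}(d)>0$.
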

\begin{proof}

Let 
$\varphi_k(x)=
\pr \brk{\Po(x)\ge k-1}$
for $x\geq 0$. Then 
$\phi_{d,r,k}(p)=\varphi_k\br{\frac{d}{(r-1)!}p^{r-1}}.$
Moreover, 
\begin{align*}
 d=\frac{\lambda(r-1)!}{\varphi_k(\lambda)^{r-1}} \quad\mbox{iff }\quad\phi_{d,r,k}\br{\br{\frac{\lambda(r-1)!}{d}}^{1/(r-1)}}=\br{\frac{\lambda(r-1)!}{d}}^{1/(r-1)}
\end{align*}
i.e. $\lambda$ is a solution to the left equation iff $(\lambda(r-1)!/d)^{1/(r-1)}$ is a fixed point of $\phi_{d,r,k}$.
Since $p^*=p^*(d,r,k)$ is the largest fixed point of $\phi_{d,r,k}$, it holds that
$p^*=(\lambda_{r,k}(d)(r-1)!/d)^{1/(r-1)}$, where $\lambda_{r,k}(d)$ is as in \Thm~\ref{Thm_Mol}. By  \Thm~\ref{Thm_Mol} this implies that
$$\psi_{r,k}(d)=\pr\br{\Po(\lambda_{r,k}(d))\geq k}=\pr\br{\Po(d/(r-1)!{p^*}^{r-1}\geq k} = \phi_{d,r,k+1}\br{p^*}.$$

Our aim is to prove that $\phi_{d,r,k}$ is contracting on the interval $[p^*,1].$
To this end, we begin with showing that $p^*(d,r,k)>0$ if $d>d_{r,k}$. By Theorem~\ref{Thm_Mol},  we have
$ d_{r,k} = \inf \left\{\left.\lambda/((r-1)!\varphi_k(\lambda)^{r-1})\right|\lambda>0\right\}.$
Furthermore, $\lambda_{r,k}(d_{r,k}) \geq 0$ by definition. Since $\lambda_{r,k}(d)$ is a strictly increasing in $d$, it holds that 
$\lambda_{r,k}(d)>\lambda_{r,k}(d_{r,k}) \geq 0 $. Using $p^*=(\lambda_{r,k}(d)(r-1)!/d)^{1/(r-1)}$ this implies
$p^*=p^*(d,r,k)>0$ for $r\geq 3$, $k\geq 2$ and $d>d_{r,k}$.

Finally, it remains to prove that $\phi_{d,r,k}$ is a contraction on $[p^*,1]$. The derivatives of $\varphi_k$ read as follows.
 	\begin{eqnarray*}
 	\frac{\partial}{\partial x}\varphi_k(x)&=&\frac{1}{(k-2)!\exp(x)}x^{k-2}\geq0,\\
	\frac{\partial^2}{\partial x^2}\varphi_k(x)&=&\frac{1}{(k-1)!\exp(x)}(-x+k-2)x^{k-3}.
	\end{eqnarray*}
Using
$$\frac{\partial}{\partial p}\phi_{d,k,r}(p)=\frac{d}{(r-2)!}p^{r-2}\frac{\partial}{\partial x}\left.\varphi_k(x)\right|_{x=\frac{dp^{r-1}}{(r-1)!}}$$
and 
$$\frac{\partial^2}{\partial p^2 }\phi_{d,k,r}(p)=\frac{d}{(r-3)!}p^{r-3}\frac{\partial}{\partial x}
\left.\varphi_k(x)\right|_{x=\frac{dp^{r-1}}{(r-1)!}} + \frac{d^2}{((r-2)!)^2}p^{2r-4}\frac{\partial^2}{\partial x^2}
\left.\varphi_k(x)\right|_{x=\frac{dp^{r-1}}{(r-1)!}}$$
we obtain $\frac{\partial}{\partial p}\phi_{d,r,k}(p)\geq 0$ for all $p\in[0,1]$ and $\frac{\partial^2}{\partial p^2}\phi_{d,r,k}(p)=0$ 
iff $p=p_0=((2rk-3r-3k+4)(r-2)!/d)^{1/(r-1)}.$
Hence, $\partial^2/\partial p^2\phi_{d,r,k}$ has only one root in $[0,1]$ and $\phi_{d,r,k}$ is convex on $[0,p_0]$ 
and concave on $(p_0,1]$. 
This means that apart from the trivial fixed point at $p=0$ the function $\phi_{d,r,k}$ either has one additional fixed point
$p^*>0$ or two additional fixed points $p^*>p_1>0.$ In total, three possible cases can occur. The first case is that $\phi_{d,r,k}$ has only one additional fixed point
and is tangent to the identity at $p^*$. The second case is that $\phi_{d,r,k}$ has only one additional fixed point, but has 
derivative at least one at the point $p=0$.
Then $\phi_{d,r,k}$ crosses the identity function at the point $p^*$.
Finally, $\phi_{d,r,k}$ can have two additional fixed points 
$p^*>p_1>0$, where $\phi_{d,r,k}$ crosses the identity.
Our aim is to prove that the first case can not occur if $p^*>0.$
Therefore, suppose it occurs for some $d'$, and let $d''$ be such that $d_{r,k} < d'' < d'$. Then 
$\phi_{d'',r,k}(p)< \phi_{d',r,k}(p) \le p$
for all $p>0$. This means that $p^*(d'',r,k)=0$, which contradicts the first part of the proof.
Therefore, it holds that $\phi_{d,r,k}$ crosses the identity at $p^*>0.$
Then it follows from $\phi_{d,r,k}(1)<1$ and the fact that 
$\partial^2\phi_{d,r,k}/\partial p^2$ changes its sign only once in $[0,1]$, that $\phi_{d,r,k}$ is concave on $[p^*,1].$ 
Furthermore, by $\frac{\partial}{\partial p}\phi_{d,r,k}(p)\geq 0$ the function
$\phi_{d,r,k}$ is monotonically increasing on $[p^*,1]$. Therefore, its derivative is less than one on this interval and the assertion follows.
\qquad
\end{proof}

\section{Warning Propagation and the $k$-core}
Suppose that $F$ is a given input factor graph which is $r$-uniform. To identify the $k$-core of $F$ we adopt the Warning Propagation message passing algorithm, \cite{MM}. Warning Propagation
is based on introducing messages on the edges of $F$ and marks on the nodes of $F$.
Messages and marks have values in $\{0,1\}$.
Both will be updated in terms of a time parameter $t\geq 0$. The messages are directed objects. That is for every
pair of adjacent nodes $v\in\cV(F)$ and $a\in\cF(F)$ and any time $t\geq 0$ there is a message $\mu_{v\to a}(t|F)$ 
``from $v$ to $a$'' and a message $\mu_{a\to v}(t|F)$ in the opposite direction. Similarly, we distinguish between the marks $\mu_{a}(t|F)$ of factor nodes $a$ and the marks
$\mu_{v}(t|F)$ of variable nodes $v$. 
At time $t=0$ we start with the configuration in which all messages in both directions are equal to $1$, i.e.
 writing $\partial x$ for the set of neighbours of $x$ in $F$, we let
	 \begin{equation}\label{vinitialisation}
	    \mu_{v\to a}(0|F)=\mu_{a\to v}(0|F)=1
	 \end{equation}
for all $a\in \cF(F)$ and $v\in\partial a$.
Inductively, abbreviating $\partial x\setminus y=\partial x\setminus\cbc y$, we let
	\begin{equation}\label{vmessages}
	  \mu_{v\to a}(t+1|F)=\vecone\cbc{\sum_{b\in\partial v\setminus a}\mu_{b\to v}(t|F)\geq k-1}
	\end{equation}
and
	\begin{equation}\label{emessages}
	  \mu_{a\to v}(t+1|F)=\vecone\cbc{\sum_{w\in\partial a \setminus v}\mu_{w\to a}(t|F)=r-1}.
	\end{equation}
Additionally, the mark of $v\in\cV(F)$ at time $t$ is
	\begin{equation}\label{vmarks}
	  \mu_v(t|F)=\vecone\cbc{\sum_{a\in\partial v}\mu_{a\to v}(t|F)\geq k}.
	\end{equation}
The mark of $a\in\cF(F)$ at time $t$ is
	\begin{equation}\label{emarks}
	  \mu_a(t|F)=\vecone\cbc{\sum_{v\in\partial a}\mu_{v\to a}(t|F)=r}.
	\end{equation}
Note that the Warning Propagation description is similar to the peeling process introduced in \S~\ref{sec_intro}. 
At time $t=1$ every variable node $v$ of degree strictly less than $k$ will send message $\mu_{v\to a}(1|F)=0$ to all its neighbours $a$. As a consequence,
 at time $t=1$
we set the mark of every factor node $a$ adjacent 
to $v$ to $\mu_{a}(1|F)=0$. 
In the subsequent iteration
the mark of every variable node 
$w$ that is adjacent to less than $k$ 
factor nodes marked with $1$ will be set to
$\mu_{w}(1|F)=0.$
If we interpret a mark equal to $0$
as deletion from the factor graph,
at each iteration Warning Propagation simultaneously deletes all factor nodes 
adjacent to variable nodes of degree less that $k$ from $F$. 
Along these lines, the intuition is that if $\mu_x(t|F)=0$ for $t\geq 0$, then after $t$ iterations Warning Propagation has ruled out that $x$ belongs to the $k$-core of $F$. 

\subsection{Convergence to the $k$-core}
The aim of this section is to reduce the study of the $k$-core of $\vF$ to the analysis of Warning Propagation on the random tree
$\T(d,r)$ that describes the local structure of $\vF$. 
The following proof strongly resembles our prior analysis in \cite{kcore} and the study of Warning Propagation in the context of 
the $k$-XORSAT problem (cf.~\cite[Chapter~19]{MM}). 

\begin{lemma}\label{Lemma_WP}
Let $F$ be a locally finite $r$-uniform factor graph.
\begin{itemize}
\item[(1)] All messages and marks of Warning Propagation on $F$ are monotonically decreasing in $t$.
\item[(2)] For any $t\geq0$ we have $\core_k(F)\subset\cbc{v\in \cV(F):\mu_v(t|F)=1}$.
\item[(3)] For any node $x$, $\lim_{t\to\infty}\mu_x(t|F)$ exists
	and node $x$ belongs to the $k$-core of $F$ iff \linebreak[4]
	$\lim_{t\to\infty}\mu_x(t|F)=1$.
\end{itemize}
\end{lemma}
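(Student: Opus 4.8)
The three assertions should be proved in order, each feeding into the next. For (1), the plan is to argue by induction on $t$ that $\mu_{v\to a}(t+1|F)\leq\mu_{v\to a}(t|F)$ and $\mu_{a\to v}(t+1|F)\leq\mu_{a\to v}(t|F)$ simultaneously for all directed edges. The base case $t=0$ is immediate: at time $t=1$ every message is a $\{0,1\}$-indicator, hence bounded above by the constant value $1$ that all messages take at time $0$ by \eqref{vinitialisation}. For the inductive step, observe that the right-hand sides of \eqref{vmessages} and \eqref{emessages} are monotone (nondecreasing) functions of the incoming messages $\mu_{b\to v}(t|F)$ resp. $\mu_{w\to a}(t|F)$; since by the inductive hypothesis each such incoming message at time $t$ dominates the corresponding one at time $t+1$, the sums only shrink, and so the indicators can only go from $1$ to $0$. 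The same monotonicity argument applied to \eqref{vmarks} and \eqref{emarks} gives that the marks are nonincreasing in $t$ as well.

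For (2), I would show by induction on $t$ that if $v\in\cC_k(F)$ then $\mu_{v\to a}(t|F)=1$ for every $a\in\partial v$, and if $a\in\cF(F)$ has $\partial a\subset\cC_k(F)$ then $\mu_{a\to v}(t|F)=1$ for every $v\in\partial a$; assertion (2) then follows from \eqref{vmarks} because a core vertex $v$ has at least $k$ incident factor nodes, all of which lie in the core (recall from \S\ref{sec_results} that a factor node is in the $k$-core iff all its neighbours are), so all $k$ of the corresponding messages $\mu_{a\to v}(t|F)$ equal $1$. The base case is \eqref{vinitialisation}. For the step: if $v\in\cC_k(F)$, then in the induced subgraph forming the core $v$ has degree $\geq k$, i.e. there are $\geq k$ factor nodes $b\in\partial v$ with $\partial b\subset\cC_k(F)$; by the inductive hypothesis each such $b$ sends $\mu_{b\to v}(t|F)=1$, so for any fixed $a$ the sum over $\partial v\setminus a$ still has at least $k-1$ ones, and \eqref{vmessages} yields $\mu_{v\to a}(t+1|F)=1$. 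Symmetrically, if $\partial a\subset\cC_k(F)$, then every $w\in\partial a\setminus v$ lies in the core and sends $1$ by the inductive hypothesis, so the sum in \eqref{emessages} equals $r-1$ and $\mu_{a\to v}(t+1|F)=1$.

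For (3), existence of $\lim_{t\to\infty}\mu_x(t|F)$ is immediate from (1): a nonincreasing sequence in $\{0,1\}$ stabilizes. One direction of the equivalence is (2): if $\lim_t\mu_v(t|F)=0$ then $v\notin\cC_k(F)$, equivalently $v\in\cC_k(F)\Rightarrow\lim_t\mu_v(t|F)=1$. For the converse I would run a ``limit Warning Propagation'' argument: set $\mu_{v\to a}(\infty|F)=\lim_t\mu_{v\to a}(t|F)$ and likewise $\mu_{a\to v}(\infty|F)$ (these exist by (1)), and consider the subgraph $C^*$ consisting of all variable nodes $v$ with $\mu_v(\infty|F)=1$ together with all factor nodes $a$ with $\mu_a(\infty|F)=1$. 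Passing to the limit in \eqref{vmessages}--\eqref{emarks} (using local finiteness so that each sum is finite and the limit commutes with it), one checks that $C^*$ has the defining property of the $k$-core: every variable node in $C^*$ has at least $k$ incident factor nodes in $C^*$ and every factor node in $C^*$ has all $r$ of its neighbours in $C^*$. Hence $C^*$ is an induced subgraph of the required type, so $C^*\subseteq\cC_k(F)$ by maximality of the core; combined with (2) this gives $C^*=\cC_k(F)$, which is exactly the claim. The one point needing care — and the main obstacle — is the limit exchange in the case that a variable node has infinitely many neighbours: here I would use that the messages are eventually constant on \emph{each} edge but perhaps not uniformly, so to conclude $\mu_v(\infty|F)=\vecone\{\sum_{a\in\partial v}\mu_{a\to v}(\infty|F)\geq k\}$ one argues that, since $k$ is finite, the event ``$\geq k$ incoming $1$'s'' is determined by finitely many coordinates in the limit; for the factor-node marks \eqref{emarks} one uses that $a$ has exactly $r<\infty$ neighbours, so no issue arises there. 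With local finiteness as assumed in the hypothesis, even the variable-node case is unproblematic, and the argument goes through cleanly.
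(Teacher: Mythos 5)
Your proposal is correct and follows essentially the same route as the paper: (1) is proved by the same joint induction on the two message update rules, (2) by the same induction tracking messages/marks out of core nodes, and (3) by the same idea of showing that the set of nodes whose limit mark is $1$ induces a subgraph satisfying the core's degree condition, hence is contained in $\cC_k(F)$ by maximality. The only stylistic difference is that you pass to limit messages directly and verify the degree condition at $\infty$, whereas the paper picks a finite stabilization time $t_0$ via local finiteness and argues there; these are the same argument.
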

\begin{proof}
To obtain (1), we  show by induction that, in fact, for any $v$ and $a\in\partial v$
\begin{align*}
A^{(1)}(t): & \hspace{0.2cm} \mu_{v\to a}(t+1|G)\leq\mu_{v\to a}(t|G)\\
B^{(1)}(t): & \hspace{0.2cm} \mu_{a\to v}(t+1|G)\leq\mu_{a\to v}(t|G)
\end{align*}
for all $t\ge 0$. Properties $A^{(1)}(0)$ and $B^{(1)}(0)$ are certainly true because of our starting conditions (\ref{vinitialisation}). 
Properties $A^{(1)}(t+1)$ and $B^{(1)}(t+1)$ follow from $A^{(1)}(t)\wedge B^{(1)}(t) \Rightarrow A^{(1)}(t+1)\wedge B^{(1)}(t+1)$ with 
(\ref{vmessages}) and (\ref{emessages}). The marks are decreasing in $t$ with (\ref{vmarks}) and (\ref{emarks}).

To obtain (2), we  show by induction that, in fact, any variable node $v$ and factor node $a$ in the core satisfy
\begin{align*}
A^{(2)}(t): & \hspace{0.2cm} \mu_{v\to b}(t|G)=1 \hspace{0.2cm} \mbox{for all } b\in \partial_F v\\
B^{(2)}(t): & \hspace{0.2cm} \mu_v(t|G) =1\\
C^{(2)}(t): & \hspace{0.2cm} \mu_{a\to w}(t|G)=1 \hspace{0.2cm} \mbox{for all } w\in \partial_F a\\
D^{(2)}(t): & \hspace{0.2cm} \mu_a(t|G) =1
\end{align*}
for all $t\geq 0$. Properties $A^{(2)}(0)$ and $C^{(2)}(0)$ are certainly true because of our starting conditions. We will show that for all $t\geq 0$, 
$A^{(2)}(t)\wedge C^{(2)}(t) \Rightarrow A^{(2)}(t+1)\wedge B^{(2)}(t) \wedge C^{(2)}(t+1)\wedge D^{(2)}(t)$. This follows because every variable node $v$
has at least $k$ neighbours in the core, also with $\mu_{a\to v}(t|F)=1 $ if $C^{(2)}(t)$ holds, and so (\ref{vmarks}) implies that 
$B^{(2)}(t)$ holds while (\ref{vmessages}) implies that $A^{(2)}(t+1)$ holds. Now, let $a$ be a factor node in the core, then $a$ has exactly $r$ neighbours
$v$ which are in the core. If $A^{(2)}(t)$ holds, then (\ref{emarks}) and (\ref{emessages}) imply $D^{(2)}(t)$ and $C^{(2)}(t+1)$.      

To obtain (3), let $F'$ be the subgraph of the factor graph induced by the nodes $x$ with
$\lim_{t\to\infty}\mu_x(t|F)=1$. The second part of this lemma implies that $\core_k(F)\subset\cV(F')$. Now, let $v$ be a function node in $F'$. Since 
$F$ is locally finite, it holds that $|\partial v|<\infty$ and so there is a time
$t_0$ such that the messages $\mu_{v\to a}(t|F)$ and $\mu_{a\to v}(t|F)$ remain constant. Since $v\in F'$ it holds that $\mu_v(t_0|F)=1$ and (\ref{vmarks})
implies that $\sum_{a\in\partial v}\mu_{a\to v}(t_0|F)\geq k$ and therefore $\mu_{v\to a}(t_0+1|F)=1$ for all $a\in\partial v$. Now, let $a\in\partial v$ 
with $\mu_{a\to v}(t_0+1|F)=1$, then (\ref{emarks}) implies that $\mu_a(t_0+1|F)=1$. Since the mark at $a$ remains constant after time $t_0$ it holds that 
$u\in F'$ for all $u\in\partial a\setminus v$. Since there are at least $k$ such edges it holds that $\cV(F')\subset\core_k(H).$
\qquad
\end{proof}

\subsection{Warning Propagation and the local structure}
In this section we begin with a analysis of Warning Propagation on the random tree $\T(d,r)$ which closely resembles the 
analysis from 
\cite{kcore}. This first step will enable us to make a connection between Warning Propagation
and the fixed point problem studied in \S~\ref{sec_thresh}.
Assume that $(T,v_0)$ is a rooted locally finite acyclic factor graph.
Then by definition the root $v_0\in\cV(T)$ is a variable node of $T$. Furthermore, starting with $v_0$, $T$ consists of consecutive layers of variable and factor nodes.
Let $x$ be a node of $T$ different from the root, then we refer to the neighbour $y$ of $x$ on the path to $v_0$ as the ''parent`` of $x$.
That is, 
a variable node $v\neq v_0$ has a parent $a\in\cF(T)$ and a factor node $b$ has a parent $w\in\cV(T)$.
We use the shorthand
	$$\mu_{v\toboss}(t|T)=\mu_{v\to a}(t|T)\quad\mbox{and}\quad\mu_{b\toboss}(t|T)=\mu_{b\to w}(t|T).$$
In addition, we let
	$$\mu_{v_0\toboss}(0|T)=1\quad\mbox{and}\quad
		\mu_{v_0\toboss}(t|T)=\vecone\cbc{\sum_{a\in\partial v_0}\mu_{a\to v_0}(t-1|T)\geq k-1}~\mbox{for $t> 0$ }.$$

\noindent
The following Lemma enables us to make a connection between Warning Propagation on 
$\T(d,r)$ and the fixed point problem from \S~\ref{sec_thresh}.

\begin{lemma}\label{Lemma_as}
The sequence $(\mu_{v_0\toboss}(t|\T(d,r)))_{t\geq1}$ converges almost surely to a random variable in $\{0,1\}$
whose expectation is equal to $p^*$. In particular, we have $\Erw[\mu_{v_0\toboss}(t|\T(d,r))]$ converges to $p^*$ as $t$
tends to infinity.

\end{lemma}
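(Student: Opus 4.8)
The plan is to track the recursive structure of the tree $\T(d,r)$ and reduce the computation of the limit of $\mu_{v_0\toboss}(t|\T(d,r))$ to a fixed point iteration. First I would unfold one step of Warning Propagation on the tree. Write $a_1,a_2,\dots$ for the factor node children of the root $v_0$; by construction their number is $\Po(c)$ with $c=d/(r-1)!$. By \eqref{vmessages} and the definition of $\mu_{v_0\toboss}$, for $t>0$ we have $\mu_{v_0\toboss}(t|\T(d,r)) = \vecone\{\sum_{i}\mu_{a_i\to v_0}(t-1)\ge k-1\}$, and by \eqref{emessages} each $\mu_{a_i\to v_0}(t-1)=\vecone\{$ all $r-1$ variable-node children of $a_i$ send message $1$ at time $t-2\}$. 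Since each such variable-node child is itself the root of an independent copy of $\T(d,r)$, and message monotonicity from \Lem~\ref{Lemma_WP}(1) holds verbatim on trees, the sequence $\mu_{v_0\toboss}(t|\T(d,r))$ is nonincreasing in $t$ and takes values in $\{0,1\}$, hence converges almost surely to some $\mu_{v_0\toboss}(\infty)\in\{0,1\}$. Let $\pi_t=\Erw[\mu_{v_0\toboss}(t|\T(d,r))]$; I want to show $\pi_t\to p^*$, and then $\Erw[\mu_{v_0\toboss}(\infty)]=\lim_t\pi_t=p^*$ follows by bounded convergence.

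Next I would derive the recursion for $\pi_t$. By independence of the subtrees and the thinning property of the Poisson distribution: the number of factor-node children $a_i$ of $v_0$ that send message $1$ at time $t-1$ is, conditionally, $\Po(c\,r_{t-1})$ where $r_{t-1}=\Erw[\mu_{a\to v}(t-1)]$ is the probability a factor node forwards a $1$, and $r_{t-1}=(\pi_{t-2})^{r-1}$ because a factor node forwards $1$ iff all $r-1$ of its (independent) variable-node children send $1$. Hence, for $t\ge 2$,
\begin{equation*}
\pi_t=\pr\brk{\Po\br{c\,\pi_{t-2}^{r-1}}\ge k-1}=\phi_{d,r,k}(\pi_{t-2}),
\end{equation*}
with $\phi_{d,r,k}$ as in \eqref{eqmain} (using $c\,p^{r-1}=\frac{d}{(r-1)!}p^{r-1}$). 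I would handle the starting values separately: $\mu_{v_0\toboss}(0)=\mu_{v_0\toboss}(1)=1$ by the initialization, so $\pi_0=\pi_1=1$. Splitting into even and odd $t$, both subsequences are the orbit of $\phi_{d,r,k}$ started at $1$.

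Finally I would invoke the analysis of $\phi_{d,r,k}$ already established. By \Lem~\ref{Prop_fix}, $\phi_{d,r,k}$ is monotonically increasing and contracting on $[p^*,1]$, with $p^*$ its largest fixed point and $\phi_{d,r,k}(1)\le 1$. Starting from $1\in[p^*,1]$, monotonicity keeps the orbit inside $[p^*,1]$ (it is nonincreasing and bounded below by $p^*$ since $p^*$ is fixed), and the contraction property forces convergence to the unique fixed point $p^*$ in that interval. Thus both the even and odd subsequences of $(\pi_t)$ converge to $p^*$, so $\pi_t\to p^*$. Combined with the almost sure monotone convergence of $\mu_{v_0\toboss}(t|\T(d,r))$ and the dominated convergence theorem, we get $\Erw[\mu_{v_0\toboss}(\infty)]=p^*$, which is the claim.

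The main obstacle I anticipate is bookkeeping the two-step (variable$\to$factor$\to$variable) nature of the recursion on the bipartite tree and being careful that the Poisson-thinning argument is applied to the right quantity — i.e. that conditioning on the subtree messages, the count of "good" factor children is genuinely $\Po(c\,\pi_{t-2}^{r-1})$, which uses both the independence of the $\Po(c)$-many subtrees and the deterministic arity $r-1$ of factor nodes in $\T(d,r)$. Everything else reduces to the already-proved dynamical properties of $\phi_{d,r,k}$ in \Lem~\ref{Prop_fix} and the monotonicity in \Lem~\ref{Lemma_WP}(1).
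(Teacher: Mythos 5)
Your proposal follows essentially the same route as the paper: establish almost-sure convergence via the monotonicity of Lemma~\ref{Lemma_WP}(1), derive the two-step recursion $\pi_t=\phi_{d,r,k}(\pi_{t-2})$ from the recursive structure of $\T(d,r)$ and Poisson thinning, and then invoke the contraction property of $\phi_{d,r,k}$ from Lemma~\ref{Prop_fix} to identify the limit $p^*$. One small slip: you claim $\mu_{v_0\toboss}(1|\T(d,r))=1$ "by the initialization'', but the update rule applies already at $t=1$, giving $\mu_{v_0\toboss}(1|\T(d,r))=\vecone\{|\partial v_0|\geq k-1\}$ and hence $\pi_1=\pr[\Po(c)\geq k-1]=\phi_{d,r,k}(1)\neq 1$ in general (this is exactly what the paper records as $p^{(1)}$); fortunately this is harmless since both $\phi^s(1)$ and $\phi^{s+1}(1)$ converge to $p^*$, so your conclusion stands.
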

\begin{proof}
Let $p^{\bc 0}=1$ and $p^{(t)}=\Erw[\mu_{v_0\toboss}(t|\T(d,r))]$ for $t\geq1$. Then we obtain that $p^{(1)}=\pr\br{\Po(d/(r-1)!)\geq k-1}= \phi_{d,r,k}(1).$

For $t\geq 2$ we use the fact that random tree $\T(d,r)$ is recursive.
Given the degree of the root $v_0$, the messages $\mu_{v\toboss}(t|\T(d,r))$ for variable nodes $v$ at distance $2$ from $v_0$ are 
mutually independent Bernoulli variables with mean $p^{(t)}$. Since each factor node $a\in\partial v_0$ has $r-1$ children, we 
have
$\pr\br{\mu_{a\toboss}(t+1|\T(d,r))=1}={p^{(t)}}^{r-1}.$
Since the degree of $v_0$ is a Poisson variable with mean $d/(r-1)!$, we conclude that

        $$p^{(t+2)}=\pr\br{\Po\br{ \frac{d}{(r-1)!}{p^{(t)}}^{r-1}    }\geq k-1}=\phi_{d,r,k}\br{p^{(t)}}\qquad\mbox{for any }t\geq0.$$

By \Lem~\ref{Prop_fix}, $\phi_{d,r,k}$ is contracting on $[p^*,1]$.
This implies that $\lim_{t\to\infty}p^{(t)}=p^*$.
Furthermore, by \Lem~\ref{Lemma_WP} the sequence $(\mu_{v_0\toboss}(t|\T(d,r)))_{t\geq1}$ is monotonically decreasing. Applying the monotone convergence theorem we conclude that 
$(\mu_{v_0\toboss}(t|\T(d,r)))_{t\geq1}$ converges almost surely.
\qquad
\end{proof}

\subsection{Back to hypergraphs}
Let $t\geq0$. We denote by $\T_{t}(d,r,k)$ the random $\cbc{0,1}$-marked tree which is obtained by marking each node $x$ in $\T(d,r)$ with 
its Warning Propagation mark $\mu_{x}(t|\T(d,r))\in\cbc{0,1}$.
The aim in this section is to prove the following Proposition.

\begin{proposition}\label{Prop_main}
The limits
	\begin{align*}
	\theta_{d,r,k}&:=\lim_{t\to\infty}\cL([\T_t(d,r,k)]),&\hspace{-2cm}\Lambda_{d,r,k}&:=\lim_{n\to\infty}\Lambda_{d,r,k,n}
	\end{align*}
exist and $\Lambda_{d,r,k}=\atom_{\theta_{d,r,k}}$.
\end{proposition}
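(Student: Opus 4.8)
The plan is to prove Proposition~\ref{Prop_main} by combining the local weak convergence of the uncoloured structure (\Thm~\ref{Thm_GW}) with the fact that the Warning Propagation marks, after finitely many rounds, depend only on a bounded neighbourhood and then pass to the limit $t\to\infty$ using monotonicity. Concretely, first I would observe that for any fixed $t$ and $s$, the truncated marked neighbourhood $\partial^s[\T_t(d,r,k)]$ is a measurable function of $\partial^{s+2t}[\T(d,r)]$, because each application of \eqref{vmessages}--\eqref{emessages} propagates information across one edge, and the mark $\mu_x(t\mid\cdot)$ via \eqref{vmarks}--\eqref{emarks} depends on messages at time $t$, hence on nodes within distance $2t$ of $x$. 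The same locality holds on $\vF$: by \Lem~\ref{Lemma_WP} the $k$-core mark $\sigma_{k,\vF}(x)=\lim_{t}\mu_x(t\mid\vF)$ is approximated from above by $\mu_x(t\mid\vF)$, which is determined by $\partial^{2t}(\vF,x)$.

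The key steps, in order, are as follows. Step 1: Fix $s\geq 0$ and $t\geq 0$ and a finite marked rooted factor graph $\tau$. Using the locality just described together with \Thm~\ref{Thm_GW}, show that $\frac1n\sum_{v}\vecone\{\partial^s[\vF_v,v,\mu_{\cdot}(t\mid\vF)]=\partial^s[\tau]\}$ converges in probability to $\pr[\partial^s[\T_t(d,r,k)]=\partial^s[\tau]]$; equivalently $\Erw_{\vF}[\atom_{\lambda_{t,k,\vF}}]\to\atom_{\cL[\T_t(d,r,k)]}$ in $\cP^2(\fF^{\{0,1\}})$, where $\lambda_{t,k,F}$ is the empirical distribution of $t$-round marked neighbourhoods. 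This is because $\kappa_{\tau,s}$ composed with the (continuous, since it factors through a truncation) ``run $t$ WP rounds and re-truncate'' map is a bounded continuous functional on $\fG$, so \Thm~\ref{Thm_GW} applies directly. Step 2: Show $\cL[\T_t(d,r,k)]\to\theta_{d,r,k}$ as $t\to\infty$: by \Lem~\ref{Lemma_WP}(1) the marks are monotone decreasing in $t$, so for each node the mark stabilises, hence $\partial^s[\T_t(d,r,k)]$ stabilises almost surely for every fixed $s$; dominated convergence on the finitely many isomorphism types at radius $s$ gives convergence of $\cL[\partial^s[\T_t(d,r,k)]]$, and since this holds for all $s$ the measures $\cL[\T_t(d,r,k)]$ converge weakly on $\fF^{\{0,1\}}$, defining $\theta_{d,r,k}$. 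Step 3: Transfer the limit to $\vF$. One needs that replacing the $t$-round marks by the true $k$-core marks $\sigma_{k,\vF}$ changes $\lambda_{k,\vF}$ negligibly as $n\to\infty$ and $t\to\infty$; the clean way is to bound, for fixed $s$, the expected fraction of vertices $v$ whose $t$-round marked $s$-neighbourhood differs from its $\infty$-round (i.e.\ $k$-core) marked $s$-neighbourhood. By \Lem~\ref{Lemma_WP}(2)--(3) such a discrepancy at $v$ forces some node $x\in\partial^s(\vF,v)$ with $\mu_x(t\mid\vF)=1$ but $x\notin\core_k(\vF)$; using Step~1 at radius $s$ this fraction converges to the $\T(d,r)$-probability of the corresponding event, which by Step~2 (the marks stabilise a.s.) tends to $0$ as $t\to\infty$. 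Combining Steps 1--3 via a standard $3\eps$/diagonal argument over $s$ yields $\Lambda_{d,r,k,n}\to\atom_{\theta_{d,r,k}}$, i.e.\ both limits exist and coincide.

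I expect the main obstacle to be Step~3, the uniform control of the gap between finite-time WP marks and the true $k$-core marks on $\vF$. The subtlety is that $\sigma_{k,\vF}$ is \emph{not} a local quantity in the finite graph — a vertex's core membership can in principle depend on arbitrarily long paths — so one cannot directly apply a continuous functional. The resolution is precisely the interchange-of-limits argument sketched above: locality holds for each \emph{fixed} $t$, the error of truncating at $t$ is itself estimated by a (now local, hence transferable) event on $\T(d,r)$, and \Lem~\ref{Lemma_as} together with \Lem~\ref{Lemma_WP} guarantees this error vanishes as $t\to\infty$. A secondary technical point is checking that the root message convention for $\T(d,r)$ (the $\mu_{v_0\toboss}$ bookkeeping) matches the mark $\mu_{v_0}(t\mid\cdot)$ used here; this is routine but must be stated so that the $\cL[\T_t(d,r,k)]$ appearing in the proposition is genuinely the pushforward of the process in \Thm~\ref{Thm_GW} under $t$ rounds of WP.
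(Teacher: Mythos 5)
Your Steps 1 and 2 are essentially the paper's route (Lemma~\ref{Prop_WP} for the fixed-$t$ local weak convergence, and the convergence of $\cL[\T_t(d,r,k)]$ via a.s.~stabilisation of marks at a fixed radius, using the recursive structure of $\T(d,r)$ and Lemma~\ref{Lemma_as}). The genuine gap is in Step 3. You write that the error of truncating at $t$ ``is itself estimated by a (now local, hence transferable) event on $\T(d,r)$'' and therefore converges to a $\T(d,r)$-probability that vanishes as $t\to\infty$. But the event you describe—there exists $x\in\partial^s(\vF,v)$ with $\mu_x(t\mid\vF)=1$ yet $x\notin\core_k(\vF)$—is \emph{not} local: the clause $x\notin\core_k(\vF)$ depends on the whole of $\vF$, so its empirical frequency cannot be identified with a $\T(d,r)$-probability via Theorem~\ref{Thm_GW}. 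For each \emph{fixed} $n$ the marks do stabilise (Lemma~\ref{Lemma_WP}(3)), but you need the stabilisation time to be uniform in $n$ (whp), and that is exactly the missing content: local weak convergence alone does not rule out a macroscopic ``defect'' in which the $k$-core is strictly smaller than the limit of the time-$t$ approximants.

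The paper closes this gap with a global, quantitative input, namely Molloy's theorem (Theorem~\ref{Thm_Mol}) on the limiting size $\frac1n|\cC_k(\vF)|\to\psi_{r,k}(d)$, combined with the fixed-point identity $\psi_{r,k}(d)=\phi_{d,r,k+1}(p^*)$ from Lemma~\ref{Prop_fix}. This gives Lemma~\ref{Lemma_eps}: the fraction $X_n(t)$ of vertices with $\mu_v(t\mid\vF)=1$ converges in probability to $x^{(t)}$ (a legitimately local/transferable quantity), while the core fraction $Y_n$ converges in probability to $\phi_{d,r,k+1}(p^*)=\lim_t x^{(t)}$; since $Y_n\le X_n(t)$ pointwise by Lemma~\ref{Lemma_WP}(2), the difference is eventually at most $\eps n$ whp. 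That counting argument—not a direct locality argument—is what makes the interchange of the $t$- and $n$-limits valid (Lemma~\ref{lem_aux}). You should explicitly bring in Theorem~\ref{Thm_Mol} and Lemma~\ref{Prop_fix} at this point; without them, Step 3 as written does not go through.
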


\Lem~\ref{Lemma_WP} implies that the $k$-core of $F$ is contained in the set of all nodes  which are marked with $1$ after $t$ rounds of Warning Propagation. Additionally, the following lemma 
implies that in the case of the random factor graph $\vF$ a bounded number of iterations is enough to give a good approximation to the $k$-core of $\vF$.

\begin{lemma}\label{Lemma_eps}
 For any $\eps>0$ there is $t>0$ s.t.
	$\abs{\cbc{v\in [n]:\mu_v(t|\vF)=1}\setminus\core_k(\vF)}\leq\eps n$ \whp\
\end{lemma}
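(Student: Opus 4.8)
The plan is to exploit the local weak convergence of $\vF$ to $\T(d,r)$ (\Thm~\ref{Thm_GW}) together with the almost sure convergence from \Lem~\ref{Lemma_as}, and to make crucial use of the monotonicity from \Lem~\ref{Lemma_WP}(1) plus the containment $\core_k(\vF)\subset\cbc{v:\mu_v(t|\vF)=1}$ from \Lem~\ref{Lemma_WP}(2). The quantity to control is $\frac1n\abs{\cbc{v\in[n]:\mu_v(t|\vF)=1}}-\frac1n\abs{\core_k(\vF)}$, which by \Lem~\ref{Lemma_WP}(2) is nonnegative; I want to show it is at most $\eps$ for a suitable fixed $t$, \whp. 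The first ingredient is \Thm~\ref{Thm_Mol}, which gives $\frac1n\abs{\core_k(\vF)}\to\psi_{r,k}(d)$ in probability, so it suffices to show $\frac1n\Erw\abs{\cbc{v:\mu_v(t|\vF)=1}}$ can be made as close to $\psi_{r,k}(d)$ as desired by taking $t$ large, together with concentration.

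First I would establish the concentration of $\abs{\cbc{v:\mu_v(t|\vF)=1}}$ around its mean for each fixed $t$. This is a standard application of a bounded–difference / Azuma–type argument (or the vertex-exposure martingale as in the edge-exposure estimates used elsewhere): for fixed $t$, changing the presence of one potential edge of $\vH$ can change $\mu_v(t|\vF)$ only for vertices within distance $2t$ of that edge in the factor graph, and \whp\ all such neighbourhoods in the sparse regime $p=d/n^{r-1}$ have bounded size, so one obtains $\abs{\cbc{v:\mu_v(t|\vF)=1}}=\Erw\abs{\cbc{v:\mu_v(t|\vF)=1}}\pm o(n)$ \whp. Second, I would compute the limit of $\frac1n\Erw\abs{\cbc{v:\mu_v(t|\vF)=1}}=\pr\brk{\mu_{\vec v}(t|\vF)=1}$ where $\vec v$ is a random vertex. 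By \Thm~\ref{Thm_GW} the marked neighbourhood $\partial^{2t}[\vF_{\vec v},\vec v]$ converges in distribution to $\partial^{2t}[\T(d,r),v_0]$, and the Warning Propagation mark $\mu_{\vec v}(t|\vF)$ is a function of $\partial^{2t}(\vF_{\vec v},\vec v)$ only (it depends on messages that have travelled at most $2t$ steps); hence $\pr\brk{\mu_{\vec v}(t|\vF)=1}\to\pr\brk{\mu_{v_0}(t|\T(d,r))=1}=:m_t$.

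Third, I would identify $\lim_{t\to\infty}m_t$. Using the recursive structure of $\T(d,r)$ exactly as in the proof of \Lem~\ref{Lemma_as}: conditioning on $\deg(v_0)=D\sim\Po(c)$, the incoming messages $\mu_{a\to v_0}(t|\T(d,r))$ for $a\in\partial v_0$ are i.i.d.\ Bernoulli, and $\pr\brk{\mu_{a\to v_0}(t|\T(d,r))=1}={p^{(t-1)}}^{r-1}$ with $p^{(t)}$ as in \Lem~\ref{Lemma_as}; then (\ref{vmarks}) gives $m_t=\pr\brk{\Po\br{cp^{(t-1)\,r-1}}\geq k}=\phi_{d,r,k+1}(p^{(t-1)})$. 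Since $p^{(t)}\to p^*$ by \Lem~\ref{Lemma_as} and $\phi_{d,r,k+1}$ is continuous, $m_t\to\phi_{d,r,k+1}(p^*)=\psi_{r,k}(d)$ by the second assertion of \Lem~\ref{Prop_fix}. Combining: given $\eps>0$, pick $t$ with $m_t\leq\psi_{r,k}(d)+\eps/2$; then \whp\ $\frac1n\abs{\cbc{v:\mu_v(t|\vF)=1}}\leq m_t+\eps/4$ and $\frac1n\abs{\core_k(\vF)}\geq\psi_{r,k}(d)-\eps/4$, so the difference is at most $\eps$, which together with \Lem~\ref{Lemma_WP}(2) gives $\abs{\cbc{v:\mu_v(t|\vF)=1}\setminus\core_k(\vF)}\leq\eps n$ \whp.

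The main obstacle is the second step — rigorously transferring the ``mark is a local function'' observation through \Thm~\ref{Thm_GW}. One must verify that the event $\cbc{\mu_{\vec v}(t|\vF)=1}$ really is measurable with respect to $\partial^{2t}[\vF_{\vec v},\vec v]$ and that the corresponding indicator is a continuous (equivalently, locally constant) functional on $\fG$ in the topology of (\ref{topofct}), so that convergence in distribution of neighbourhoods yields convergence of probabilities; this requires noting that Warning Propagation messages up to time $t$ depend only on the isomorphism type of the depth-$2t$ rooted neighbourhood, which is immediate from (\ref{vmessages})–(\ref{emarks}) by induction on $t$. A minor secondary point is handling the rare event that $\vec v$'s depth-$2t$ neighbourhood is not tree-like; in the sparse regime this has probability $o(1)$ and contributes negligibly to $\frac1n\Erw\abs{\cbc{v:\mu_v(t|\vF)=1}}$.
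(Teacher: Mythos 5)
Your proof takes essentially the same route as the paper's own: it uses \Thm~\ref{Thm_Mol} together with \Lem~\ref{Prop_fix} to identify the limiting core density $\psi_{r,k}(d)=\phi_{d,r,k+1}(p^*)$, uses \Thm~\ref{Thm_GW} plus the $2t$-locality of Warning Propagation (with a concentration argument) to show $\frac1n|\{v:\mu_v(t|\vF)=1\}|\to x^{(t)}$ in probability, uses \Lem~\ref{Lemma_as} and the recursive structure of $\T(d,r)$ to obtain $x^{(t)}\to\phi_{d,r,k+1}(p^*)$, and finishes via \Lem~\ref{Lemma_WP}(2). The only difference is that you spell out the locality/concentration step in detail where the paper compresses it into a one-line appeal to ``the previous lemma.''
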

\begin{proof}
To prove the above statement let 
$$Y_n=\frac 1n |\cC_k(\vF)| \mbox{ and } X_n(t)=\frac 1n |\{v\in[n]:\mu_v(t|\vF)=1\}|$$
be the fraction of nodes in the $k$-core of $\vF$ and the fraction of 
nodes labelled with  $1$ after $t$ iterations of Warning Propagation respectively. Further, let $x^{(t)}=\Erw_{\T(d,r)}[\mu_{v_0}(t|\T(d,r)].$ By the previous lemma it holds that
$X_n(t)$ converges to $x^{(t)}$ in probability as $n$ tends to infinity. On the other hand, from \Thm~\ref{Thm_Mol} and \Lem~\ref{Prop_fix} we know that $Y_n$ converges to 
$\phi_{d,r,k+1}(p^*)$ in probability as $n$ tends to infinity. Finally, by \Lem~\ref{Lemma_as} it holds that 
\begin{equation}\label{eqConv}
  \Erw_{\T(d,r)}[\mu_{v_0\toboss}(t|\T(d,r)]\to p^*
\end{equation}
deterministically as $t$ tends to infinity. We can apply \eqref{eqConv} to each child $v$ of $v_0$ in $\T(d,r)$. Therefore, we obtain
\begin{align*}
 x^{(t+1)}&=\pr\br{\sum_{v\in\partial v_0} \mu_{v\toboss}(t|\T(d,r))\geq k}\\
          &=\pr\br{\Po\br{dp^{(t)}}\geq k)}
          \to \pr\br{\Po(dp^*)\geq k}
          =\phi_{d,r,k+1}(p^*).
\end{align*}
Hence, for suitable $t$ we conclude
$\pr\br{|Y_n-X_n(t)|\geq\eps}\to 0$ as $n$ tends infinity. 
\qquad
\end{proof}

To prove the main result of this subsection we will first show that the distribution of the neighbourhoods of the random factor graph $\vF$ marked according to $\mu_{\nix}(t|\vF)$ 
is described by the distribution of $\T(d,r)$ together with Warning Propagation after $t$ rounds. Recall that
$\cP(\fG^{\cX})$ and $\cP^2(\fG^{\cX})$ denote the spaces of probability measures on $\fG^{\cX}$ and $\cP(\fG^{\cX})$ respectively, both equipped with the weak topology.
That is, 
a sequence of probability measures 
$(\mu_n)_n$
converges in $\cP(\fG^{\cX})$
iff 
$(\int f\dd\mu_n)_n$ is convergent for any
bounded continuous function $f:\fG^{\cX}\rightarrow \mathbb R$.
It is well known that both spaces are Polish, 
 i.e. separable and completely metrizable \cite{Polish}. 
As a consequence, in order
to prove convergence  of
$(\mu_n)_n$ in $\cP(\fG^{\cX})$ it suffices to show, that the sequence $(\int f\dd\mu_n)_n$ is convergent for any continuous function $f:\fG^{\cX}\rightarrow \mathbb R$ 
with compact support.  The same characterization of weak convergence holds in $\cP^2(\fG^{\cX})$.

For a factor graph $F$ and $x\in \cV(F)\cup\cF(F)$ let $\mu_{\nix}(t|F_x)$ denote the map $y\mapsto\mu_y(t|F_x)$.
Define
	\begin{align*}
	\lambda_{k,F,t}&:=\frac1{|\cV(F)|}\sum_{v\in \cV(F)}\delta_{[F_v,v,\mu_{\nix}(t|F_v)]}\in\cP(\fF^{\cbc{0,1}})
        \end{align*}
and
        \begin{align}\label{eqLambdadknt}
	\Lambda_{d,r,k,n,t}&:=\Erw_{\vF}[\atom_{\lambda_{k,\vF,t}}]\in\cP^2(\fF^{\cbc{0,1}}).
	\end{align}
Thus, $\Lambda_{d,r,k,n,t}$ is the distribution of the neighbourhoods of the random graph $\vF$ marked according to $\mu_{\nix}(t|\vF)$.

\begin{lemma}\label{Prop_WP}
We have \ $\lim_{n\to\infty}\Lambda_{d,r,k,n,t}=\atom_{\cL([\T_{t}(d,r,k)])}$ for any $t\geq0$.
\end{lemma}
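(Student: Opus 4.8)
The plan is to show that the $t$-round Warning Propagation marking on the local neighbourhoods of $\vF$ is, asymptotically, governed by the local weak limit $\T(d,r)$ of $\vF$ together with $t$ rounds of Warning Propagation on it. The point is that a single round of the Warning Propagation update \eqref{vmessages}--\eqref{emarks} at a node $x$ depends only on data within the ball of radius $1$ around $x$, so $t$ rounds depend only on $\partial^{O(t)}$. Hence the mark $\mu_y(t|F_v)$ of a node $y$ at distance $s$ from $v$ is a (measurable, isomorphism-invariant) function of the finite rooted factor graph $\partial^{s+2t}[F_v, v]$. Consequently, for fixed $\tau\in\fF^{\cbc{0,1}}$ and fixed radius $\omega$, the indicator $\vecone\cbc{\partial^\omega[F_v,v,\mu_\cdot(t|F_v)]=\partial^\omega[\tau]}$ is itself a function of $\partial^{\omega+2t}[F_v,v]$ alone; call this function $\chi_\tau:\fF\to\cbc{0,1}$, which is a finite linear combination of the topology-generating indicators $\kappa_{F',s}$ with $s=\omega+2t$.

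First I would make the last observation precise: define the "locality radius" of Warning Propagation and verify by induction on $t$ (using \eqref{vinitialisation} for the base case and the fact that \eqref{vmessages}--\eqref{emessages} read off neighbours only for the inductive step) that $\mu_x(t|F)$ is determined by $\partial^{2t}(F,x)$, and more generally that the marked ball $\partial^\omega[F_v,v,\mu_\cdot(t|F_v)]$ is determined by $\partial^{\omega+2t}[F_v,v]$ as an isomorphism class. Second, I would invoke \Thm~\ref{Thm_GW}, which gives $\lim_{n\to\infty}\Lambda_{d,r,n}=\atom_{\cL([\T(d,r)])}$ in $\cP^2(\fF)$; using the characterization of weak convergence via the functions $\kappa_{F',s}$ recalled just before the lemma, this says that for every $F'\in\fF$ and every $s\ge 0$,
\[
\frac1{|\cV(\vF)|}\sum_{v\in\cV(\vF)}\vecone\cbc{\partial^s[\vF_v,v]=\partial^s[F']}\;\longrightarrow\;\pr\brk{\partial^s[\T(d,r)]=\partial^s[F']}
\]
in probability, and the outer Dirac structure means the limit is deterministic. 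Third, since $\chi_\tau$ is a finite $\mathbb{R}$-linear combination of such $\kappa_{F',\omega+2t}$, linearity gives
\[
\frac1{|\cV(\vF)|}\sum_{v\in\cV(\vF)}\chi_\tau\br{[\vF_v,v]}\;\longrightarrow\;\Erw\brk{\chi_\tau\br{[\T(d,r)]}}=\pr\brk{\partial^\omega[\T_t(d,r,k)]=\partial^\omega[\tau]}
\]
in probability, for every such $\tau$ and $\omega$. Because the left-hand average is exactly the $\kappa_{\tau,\omega}$-integral of $\lambda_{k,\vF,t}$, and the family $\cbc{\kappa_{\tau,\omega}}$ both generates the topology on $\fF^{\cbc{0,1}}$ and (by the Polishness recalled above) suffices to test weak convergence, this gives $\lambda_{k,\vF,t}\to\cL([\T_t(d,r,k)])$ weakly in probability; taking the outer expectation and noting the limit is a constant yields $\Lambda_{d,r,k,n,t}=\Erw_{\vF}[\atom_{\lambda_{k,\vF,t}}]\to\atom_{\cL([\T_t(d,r,k)])}$ in $\cP^2(\fF^{\cbc{0,1}})$, which is the claim.

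The main obstacle is the bookkeeping in the first step: one must be careful that the map sending the \emph{unmarked} isomorphism class $\partial^{\omega+2t}[\vF_v,v]$ to the \emph{marked} isomorphism class $\partial^\omega[\vF_v,v,\mu_\cdot(t|\vF_v)]$ is genuinely well-defined on isomorphism classes (Warning Propagation is defined without reference to vertex names, so it commutes with isomorphisms, but this needs to be said), and that boundary effects at distance $\omega+2t$ do not corrupt the marks inside radius $\omega$ — they do not, precisely because each mark $\mu_y(t|\cdot)$ at distance $\le\omega$ from $v$ only inspects radius $2t$ around $y$, hence radius $\le \omega+2t$ around $v$. Everything after that is a soft transfer along \Thm~\ref{Thm_GW} via the $\kappa$-functions, together with the standard fact that convergence of a sequence of measures against every $\kappa_{F',s}$ implies weak convergence in these Polish spaces. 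No quantitative estimate on $\vH$ is needed here; the probabilistic content is entirely contained in \Thm~\ref{Thm_GW}.
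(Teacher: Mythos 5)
Your proof is correct and follows essentially the same approach as the paper's: observe that after $t$ rounds, Warning Propagation marks inside a ball of radius $\omega$ are determined by the unmarked ball of some larger constant radius (the paper uses $\omega+t$; your $\omega+2t$ is a harmless overestimate), then transfer along \Thm~\ref{Thm_GW} via the $\kappa$-functions.
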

\begin{proof}
Let $\tau$ be a locally finite acyclic factor graph rooted at $v_0$ and let $F$ be a factor graph such that $\partial^{s+t}[F,v]\ism\partial^{s+t}[\tau,v_0]$
for some variable node $v\in\cV(F)$. Assume we would like to study Warning Propagation on $\partial^s[F,v]$ after $t$ iterations on $F$. By definition of Warning Propagation,
all Warning Propagation marks of vertices in $\partial^s[F,v]$ are determined by the initialization of Warning Propagation and the structure of $\partial^{s+t}[F,v].$ Therefore, if  
$\partial^{s+t}[F,v]\ism\partial^{s+t}[\tau,v_0]$ holds, we obtain that 
$\partial^s[F,v,\mu_{\nix}(t|F)]\ism\partial^s[\tau,v_0,\mu_{\nix}(t|\tau)].$
The assertion follows from \Thm~\ref{Thm_GW}.
\qquad
\end{proof}

\begin{lemma}\label{lem_aux}
We have \ $\lim_{n\to\infty}\Lambda_{d,r,k,n}=\lim_{t\to\infty}\lim_{n\to\infty}\Lambda_{d,r,k,n,t}$.
\end{lemma}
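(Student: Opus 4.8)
The plan is to show that the double limit on the right-hand side exists and coincides with $\lim_n \Lambda_{d,r,k,n}$ by sandwiching. The outer ingredients are already in place: \Lem~\ref{Prop_WP} gives $\lim_{n\to\infty}\Lambda_{d,r,k,n,t}=\atom_{\cL([\T_t(d,r,k)])}$ for each fixed $t$, and \Lem~\ref{Lemma_eps} controls how well $t$ rounds of Warning Propagation approximate the true core. So the first step is to argue that $\lim_{t\to\infty}\cL([\T_t(d,r,k)])$ exists in $\cP(\fF^{\{0,1\}})$: by \Lem~\ref{Lemma_WP}(1) the marks $\mu_x(t|\T(d,r))$ are monotonically decreasing in $t$ and by \Lem~\ref{Lemma_WP}(3) they converge pointwise to the core-indicator $\sigma_{k,\T(d,r)}$. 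Since the underlying tree is locally finite, for any fixed radius $s$ the restriction $\partial^s[\T_t(d,r,k)]$ stabilizes almost surely as $t\to\infty$ (only finitely many nodes within radius $s$, each mark eventually constant), so $\partial^s[\T_t(d,r,k)]\to\partial^s[\T(d,r),v_0,\sigma_{k,\T(d,r)}]$ almost surely. By dominated convergence this gives convergence of $\cL([\T_t(d,r,k)])$ against every test function $\kappa_{F,s}$, hence weak convergence; call the limit $\theta_{d,r,k}$. Consequently $\lim_{t\to\infty}\lim_{n\to\infty}\Lambda_{d,r,k,n,t}=\atom_{\theta_{d,r,k}}$, so the right-hand side is well-defined.

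It remains to show $\lim_{n\to\infty}\Lambda_{d,r,k,n}=\atom_{\theta_{d,r,k}}$. Here I would compare $\lambda_{k,\vF}$ (marks given by the actual core membership $\sigma_{k,\vF}$) with $\lambda_{k,\vF,t}$ (marks given by $\mu_{\nix}(t|\vF)$). By \Lem~\ref{Lemma_WP}(2), for every $v$ and every $t$ we have $\core_k(\vF)\subseteq\{v:\mu_v(t|\vF)=1\}$, so the two markings differ only on vertices $v$ with $\mu_v(t|\vF)=1$ but $v\notin\core_k(\vF)$, and the marks of factor nodes differ only in the neighbourhoods of such vertices. \Lem~\ref{Lemma_eps} says the number of such vertices is at most $\eps n$ w.h.p.\ for suitable $t=t(\eps)$. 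The key quantitative point is that flipping the mark at one vertex $v$ changes $\partial^s[F_u,u,\cdot]$ only for $u$ within distance $s$ of $v$; since $\vF$ w.h.p.\ has a bounded-degree-on-average structure with at most, say, $n^{o(1)}$ vertices in any radius-$s$ ball away from a negligible set (or more carefully, the expected ball size is $O(1)$), the fraction of roots $u$ whose radius-$s$ neighbourhood-with-marks is affected is $O(\eps)$ w.h.p. Hence for any test function $\kappa_{F,s}$,
$$
\abs{\int \kappa_{F,s}\,\dd\lambda_{k,\vF}-\int \kappa_{F,s}\,\dd\lambda_{k,\vF,t}}\le O(\eps)\qquad\text{w.h.p.}
$$
Combined with \Lem~\ref{Prop_WP}, which identifies $\lim_n\int\kappa_{F,s}\,\dd\lambda_{k,\vF,t}$ with $\int\kappa_{F,s}\,\dd\cL([\T_t(d,r,k)])$, and the convergence $\cL([\T_t(d,r,k)])\to\theta_{d,r,k}$ established above, a standard $\eps$-then-$t$-then-$n$ argument yields that $\int\kappa_{F,s}\,\dd\lambda_{k,\vF}$ converges in probability to $\int\kappa_{F,s}\,\dd\theta_{d,r,k}$, which is exactly $\lim_n\Lambda_{d,r,k,n}=\atom_{\theta_{d,r,k}}$ in $\cP^2(\fF^{\{0,1\}})$.

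The main obstacle I anticipate is the quantitative step bounding the fraction of root vertices whose bounded-radius marked neighbourhood is affected by the $\le\eps n$ ``discrepancy'' vertices. One needs that in $\vF$ w.h.p.\ the sum over all vertices $u$ of $\vecone\{B_s(u)\text{ meets the discrepancy set}\}$ is $O(\eps n)$; this requires a union/first-moment bound using that the expected number of vertices within distance $s$ of a fixed vertex in $\vF$ is bounded by a constant $C(s,d,r)$ — essentially the same local-boundedness input underlying \Thm~\ref{Thm_GW}. A clean way to package this is: condition on $\vF$, note $\sum_u \vecone\{B_s(u)\cap D_t\neq\emptyset\}\le\sum_{v\in D_t}|B_s(v)|$ where $D_t$ is the discrepancy set, and bound $\Erw|B_s(v)|$ uniformly; since $|D_t|\le\eps n$ w.h.p., Markov's inequality finishes it. Modulo this local-volume estimate, the rest of the proof is the routine interchange-of-limits sandwich, which is why I would present it at the level of the displayed inequalities above rather than grinding through the topology on $\cP^2$.
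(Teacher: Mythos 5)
Your proposal is correct and follows essentially the same route as the paper: compare $\lambda_{k,\vF}$ with $\lambda_{k,\vF,t}$ via test functions $\kappa_{F,s}$, invoke \Lem~\ref{Lemma_eps} to make the discrepancy set small, and use the bounded expected $s$-ball size implicit in \Thm~\ref{Thm_GW} to bound the fraction of roots whose marked $s$-neighbourhood is affected. The only cosmetic difference is that the paper packages the local-volume step via the two exceptional sets $\cI(\ell_0,\vF)$ and $\cJ(\vF)$ rather than the sum $\sum_{v\in D_t}|B_s(v)|$ you propose, and it defers the explicit identification of $\lim_{t\to\infty}\cL([\T_t(d,r,k)])$ to \Prop~\ref{Prop_main} rather than establishing it inside this lemma.
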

\begin{proof}
Let $f:\cP(\fG^{\cbc{0,1}})\to\RR$ be continuous 
with compact support.
\Lem~\ref{Prop_WP} implies that $(\Lambda_{d,r,k,n,t})_{n}$ converges for any $t$.
Hence, so does $(\int f\dd\Lambda_{d,r,k,n,t})_{n}$.
Therefore, we will compare $\int f\dd\Lambda_{d,r,k,n}$ and $\int f\dd\Lambda_{d,r,k,n,t}$.
Plugging in the definitions of $\Lambda_{d,r,k,n}$ and $\Lambda_{d,r,k,n,t}$ (\eqref{eqLambdadkn} and~\eqref{eqLambdadknt}), we find that

	\begin{align*}
	\int f\dd\Lambda_{d,r,k,n}&=\Erw_{\vF}\brk{f(\lambda_{k,\vF})},&
	\int f\dd\Lambda_{d,r,k,n,t}&=\Erw_{\vF}\brk{f(\lambda_{k,\vF,t})}.
	\end{align*}
Hence,
	\begin{align}\label{eqconv666}
	\abs{\int f\dd\Lambda_{d,r,k,n}-\int f\dd\Lambda_{d,r,k,n,t}}\leq\Erw_{\vF}\abs{f(\lambda_{k,\vF})-f(\lambda_{k,\vF,t})}.
	\end{align}
To bound the last term we will show that
\begin{align}\label{convp}
 \abs{\int\kappa_{F,s}\dd\lambda_{k,\vF}-\int\kappa_{F,s}\dd\lambda_{k,\vF,t}}\overset{t,n\to\infty}{\longrightarrow}0
\end{align}
in probability for all $F\in\cP(\fF_{\cbc{0,1}})$ and $s\geq 0$.
Plugging in the definitions of $\lambda_{k,\vF}$ and $\lambda_{k,\vF,t}$ we obtain
\begin{align*}
 &\Erw_{\vF}\abs{\int\kappa_{F,s}\dd\lambda_{k,\vF}-\int\kappa_{F,s}\dd\lambda_{k,\vF,t}}&\\
 &\quad\quad\leq\Erw_{\vF}\brk{\frac{1}{n}\sum_{v\in [n]}\abs{\int\kappa_{F,s}\dd\atom_{\brk{\vF_v,v,\sigma_{k,\vF_v}}} - 
 \int\kappa_{F,s}\dd\atom_{\brk{\vF_v,v,\mu_{\nix}(t|\vF_v)}} }}\\
 &\quad\quad=\Erw_{\vF,\vv}\left[\abs{\kappa_{F,s}([\vF_{\vv},\vv,\sigma_{k,\vF_{\vv}}])-\kappa_{F,s}([\vF_{\vv},\vv,\mu_{\nix}(t,\vF_{\vv})])}\right].
\end{align*}
By the definition of $\kappa_{F,s}$ we have
	\begin{align}
	\Erw_{\vF,\vv}\left[\abs{\kappa_{\Gamma,s}([\vF_{\vv},\vv,\sigma_{k,\vF_{\vv}}])-\kappa_{\Gamma,s}([\vF_{\vv},\vv,\mu_{\nix}(t,\vF_{\vv})])}\right]\nonumber\\
        \leq\pr\brk{\partial^s[\vF_{\vv},\vv,\sigma_{k,\vF_{\vv}}]\neq\partial^s[\vF_{\vv},\vv,\mu_{\nix}(t|\vF_{\vv})]}.\label{eqconv667}
	\end{align}
If $v$ is such that $\partial^s[\vF_{\vv},\vv,\sigma_{k,\vF_{\vv}}]\neq\partial^s[\vF_{\vv},\vv,\mu_{\nix}(t|\vF_{\vv})]$, then by \Lem~\ref{Lemma_WP} this means that after $t$ 
iterations of Warning Propagation on $\vF$ there is a vertex $u$ at distance at most $s$ from $v$ such that $\mu_u(t|\vF)=1$, but $u\not\in\cC_k(\vF).$ By \Lem~\ref{Lemma_eps}, we can bound 
the probability that such $u$ exists provided that there are not too many vertices at distance at most $s$ from $v$ in total. By \Thm~\ref{Thm_GW} this is the case for most vertices
$v$. Therefore we can bound the right hand side of (\ref{eqconv667}).

To make this more precise let $\delta>0$ and denote by $\cI(\ell,\vF)$ the set of all variable nodes $v$ in $\vF$ such that $\partial^s[\vF_v,v]$ contains more than $\ell$ variable and factor nodes.
\Thm~\ref{Thm_GW} implies that there exists $\ell_0=\ell_0(\delta,d)$ such that $\pr\brk{|\cI(\ell_0,\vF)|>\delta n}\leq\delta$.
Further, let $\cJ(\vF)$ be the set of all variable nodes $v\in[n]\setminus \cI(\ell_0,\vF)$ such that there is a vertex $u\in\partial^s[\vF_v,v]$ with 
$u\not\in\cC_k(\vF)$, but $\mu_u(t|\vF)=1$.
Then \Lem~\ref{Lemma_eps} implies that $\pr\brk{|\cJ(\vF)|>\delta n}\leq\delta$, provided that $n\geq n_0(\delta)$ and $t\geq t_0(\delta)$ are sufficiently large.
Hence, conditioning on the event $\{|\cI(\ell_0,\vF)|\leq\delta n,|\cJ(\vF)|\leq\delta n\}$ and its complement implies 
	\begin{align*}
	&\pr\brk{\partial^s[\vF_{\vv},\vv,\sigma_{k,\vF_{\vv}}]\neq\partial^s[\vF_{\vv},\vv,\mu_{\nix}(t,\vF_{\vv})]}&\\
        &\quad\quad\leq\pr\brk{|\cI(\ell_0,\vF)|>\delta n}+\pr\brk{|\cJ(\vF)|>\delta n}\\
        &\quad\quad\quad\quad+\pr\brk{\vv\in\cI(\ell_0,\vF)\cup\cJ(\vF)||\cI(\ell_0,\vF)|\leq\delta n,|\cJ(\vF)|\leq\delta n}\leq4\delta
	\end{align*}
and we obtain (\ref{convp}). Now, let $\eps>0$. 
Using 
that $\cP(\fG_{\cbc{0,1}})$ is metrizable
 and the fact that the functions $\kappa_{F,s}$ generate the topology on $\fF_{\cbc{0,1}}$, (\ref{convp}) implies that for given $\delta=\delta(\eps)>0$ 
there exist $n>n_0(\delta)$, $t>t_0(\delta)$ such that the distance of $\lambda_{\vF,k}$ and 
$\lambda_{\vF,k,t}$ in an appropriate metric is less than $\delta$ with probability larger than $1-\eps$. $f$ is continuous with compact support and thus uniformly continuous. This implies
	\begin{align}\label{eqconv669}
	\Erw_{\vF}\abs{f(\lambda_{k,\vF})-f(\lambda_{k,\vF,t})}<\eps.
	\end{align}
for suitable $\delta>0.$ 

Combining \eqref{eqconv666} and \eqref{eqconv669} we obtain
	$$\lim_{n\to\infty}\Lambda_{d,r,k,n}=\lim_{t\to\infty}\lim_{n\to\infty}\Lambda_{d,r,k,n,t}$$
as desired.
\qquad 
\end{proof}

{\em Proof of \Prop~\ref{Prop_main}.}\\
We begin with proving that the sequence $(\cL([\T_t(d,r,k)]))_{t}$ converges in $\cP(\fG_{\{0,1\}})$.
Since $\cP(\fG_{\{0,1\}})$ is equipped
with the weak topology, we need to show that for any bounded continuous function $f:\fF_{\cbc{0,1}}\to\RR$
the sequence 
	$(\Erw_{\T(d,r)}[f([\T_t(d,r,k)])])_t$ converges.
Since the topology on $\fF_{\cbc{0,1}}$ is generated by the functions $\kappa_{F,s}$ it suffices to 
assume that $f=\kappa_{F,s}$ for some $F\in\fF_{\cbc{0,1}}$, $s\geq0$.
Hence,
	\begin{align*}
	\Erw_{\T(d,r)}[f([\T_t(d,r,k)])]&=\pr\brk{\partial^s[\T_t(d,r,k)]=\partial^sF}.
	\end{align*}
To show that $(\pr\brk{\partial^s[\T_t(d,r,k)]=\partial^sF})_t$ converges, let $\eps>0$, $q\geq 0$ and let $\cT_{q}$ be the $\sigma$-algebra generated
by $\partial^{q}[\T(d,r)]$. Let $\cB(q)$ be the set of vertices at level $s$ in $\T(d,r)$. We will distinguish between the cases that $s$ is odd and even.

First, let $s$ be odd. Then $\cB(s+1)$ consists of variable nodes.
The structure of the random tree $\T(d,r)$ is recursive, i.e., given $\cT_{s+1}$ the tree pending on each vertex $v\in\cB(s+1)$ is
just an independent copy of $\T(d,r)$ itself.
Therefore, \Lem~\ref{Lemma_as} and the union bound imply that conditioned on $\cT_{s+1}$ 
the limits
	$$\mu_{v\toboss}(\T(d,r))=\lim_{t\to\infty}\mu_{v\toboss}(t|\T(d,r))\qquad(v\in\cB(s+1))$$
exist almost surely.

All limits $\lim_{t\to\infty}\mu_{x}(t|\T(d,r))$ for
nodes $x$ at distance at most $s$ from $v_0$ are determined by the limits $\mu_{v\toboss}(\T(d,r))$ for $v\in\cB(s+1)$.
This implies that the sequence $(\pr\brk{\partial^s[\T_t(d,r,k)]=\partial^sF|\cT_{s+1}})_t$ of random variables converges almost surely.
This immediately implies that
$(\pr\brk{\partial^s[\T_t(d,r,k)]=\partial^s F})_t$ converges almost surely.
If $s$ is even, we can use the same argument for variable nodes in $\cB(s+2)$ and conditioning on $\cT_{s+2}$ to show that the sequence
$(\pr\brk{\partial^s[\T_t(d,r,k)]=\partial^sF|\cT_{s+2}})_t$ converges. Again, this implies convergence of the sequence  
$(\pr\brk{\partial^s[\T_t(d,r,k)]=\partial^s F})_t$.
As this holds for any $\Gamma,s$, the limit $\theta_{d,r,k}=\lim_{t\to\infty}\cL([\T_t(d,r,k)])$ exists.
Therefore, it holds that
$\lim_{t\to\infty}\atom_{\cL([\T_{t}(d,r,k)])}=\atom_{\theta_{d,r,k}}$.
Finally, by 
 \Lem~\ref{lem_aux} and \Lem~\ref{Prop_WP} we obtain
$\lim_{n\to\infty}\Lambda_{d,r,k,n}=\atom_{\theta_{d,r,k}},$
which completes the proof.
\qquad
\endproof

\section{The multi-type branching process}
In this section we 
transfer the analysis from
\cite{kcore} to the factor graph notion 
to
prove that the $\{0,1\}$-marked tree $\T_t(d,r,k)$ converges to the projection $\T(d,r,k,p^*)$ of the $9$-type
branching process $\hat\T(d,r,k,p^*)$ from \S~\ref{sec_results}
as $t\to\infty.$ 
This will imply \Thm~\ref{Thm_lwc}.
\subsection{Truncating the tree}
We begin with characterizing the limiting distribution of the first $s$ levels of $\T_t(d,r,k)$. Though our ultimate interest is in the Warning Propagation marks 
on $\T(d,r)$, a first important step of our proof is studying the bottom up messages $\mu_{x\toboss}(t|\T(d,r))$ at nodes $x$ in $\cV(\T(d,r))\cup\cF(\T(d,r))$. By construction of the
Warning Propagation marks, starting at the root $v_0$ these can be reconstructed from the bottom up messages. That is, the bottom up messages contain all the necessary information to
define the Warning Propagation marks on $\T(d,r).$ The key feature of the bottom up messages $\mu_{x\toboss}(t|\T(d,r))$ is that they
are independent of the part of $\T(d,r)$ above $x$, i.e. they are determined by the subtree of $\T(d,r)$
pending on $x$. By the recursive structure of $\T(d,r)$ conditioned on the structure of $\T(d,r)$ above a variable node $v$ the bottom up message at $v$ has the same distribution as the message 
$\mu_{v_0\toboss}(t|\T(d,r))$ at the root $v_0$. This behaviour makes it much easier to first describe the limiting distribution of
$\T(d,r)$ marked with these bottom up messages $\mu_{x\toboss}(t|\T(d,r))$, i.e.
$$\theta_{d,r,k,t}^{s}:=\cL(\partial^s[\T(d,r),v_0,\mu_{\nix\toboss}(t|\T(d,r))]).$$

Using \Lem~\ref{Lemma_as} and the recursive structure of $\T(d,r)$ we can conjecture a possible limit distribution 
for $\theta_{d,r,k,t}^{s}$ as $t\to\infty$. \Lem~\ref{Lemma_as} suggests to assume that the pointwise limits of the bottom up messages $\mu_{v\toboss}(t|\T(d,r))$
at variable nodes $v\in\cV(\T(d,r))$ exist. We first apply this to the case that $s$ is even.
Once  we condition on the isomorphism class $\partial^s[\T(d,r)]$ of the tree up to level $s$, the messages $\lim_{t\to\infty}\mu_{x\toboss}(t|\T(d,r))$ for nodes
$x$ at distance less than $s$ from the root $v_0$ are determined by the boundary messages 
$\lim_{t\to\infty}\mu_{v\toboss}(t|\T(d,r))$ sent out by the variable nodes at distance precisely $s$ from $v_0$.
Furthermore, the limit $\lim_{t\to\infty}\mu_{v\toboss}(t|\T(d,r))$ is determined by the tree pending on $v$ only.
These trees are mutually independent copies of $\T(d,r)$.
Thus, \Lem~\ref{Lemma_as} suggests that the ''boundary messages`` converge to a sequence of mutually independent $\Be(p^*)$ variables.
Therefore, we can conjecture that $\theta_{d,r,k,t}^{s}$ converges to the random $\{0,1\}$-marked tree which is obtained as follows. Create the first $s$ levels
of the random tree $\T(d,r)$ (without markings), then create a sequence of independent $\Be(p^*)$ random variables. Plug in these ''messages`` at level $s$ and pass them 
up to the root.

If $s$ is odd the nodes at distance precisely $s$ from the root $v_0$ are factor nodes and thus \Lem~\ref{Lemma_as} does not apply to the messages at these nodes. Still,
conditioning on the isomorphism class $\partial^{s+1}[\T(d,r)]$ we can apply the same argument for the variable nodes at distance $s+1$ from the root. In this case, one would create the first $s+1$ 
levels of $\T(d,r)$ marking each variable node at distance $s+1$ by a $\Be(p^*)$ ''message`` and pass the messages up to the root.

To define this distribution formally, let
$T$ be a locally finite acyclic factor graph rooted at $v_0\in\cV(T)$ and let $s>0$ be an integer.
Moreover, let $\beta=(\beta_w)_{w\in \cV(T)}$ be a family of independent $\Be(p^*)$ random variables.
If either $t=0$ or if $v$ is a variable node at distance greater than $s$ from $v_0$, we define
	\begin{equation}\label{eqIniv}
	\mu_{v\toboss}^*(t|T,s)=\beta_v.
	\end{equation}
If $a$ is a factor node in $T$
        \begin{equation}\label{eqInie}
	\mu_{a\toboss}^*(0|T,s)=1.
	\end{equation}
Moreover, if $t>0$ and if $a$ is a factor node in $T$
	\begin{equation}\label{eqIt1e}
	\mu_{a\toboss}^*(t|T,s)=\vecone\cbc{\sum_{v\in\partial^+ v}\mu_{v\toboss}^*(t-1|T,s)=r-1}.
	\end{equation}
Finally, if $t>0$ and if $v$ has distance less than or equal to $s$ from $v_0$, let
	\begin{equation}\label{eqIt1v}
	\mu_{v\toboss}^*(t|T,s)=\vecone\cbc{\sum_{a\in\partial^+v}\mu_{a\toboss}^*(t-1|T,s)\geq k-1}.
	\end{equation}
Let us denote the map $x\mapsto\mu_{x\toboss}^*(t|T,s)$ by $\mu_{\nix\toboss}^*(t|T,s)$.

\begin{lemma}\label{claim_5}
Let $0<s<u$. We have $\cL(\partial^s[\T(d,r),v_0,\mu_{\nix\toboss}^*(s|\T(d,r),s)])
\linebreak[4]
=\cL(\partial^s[\T(d,r),v_0,\mu_{\nix\toboss}^*(u|\T(d,r),s)])$.
\end{lemma}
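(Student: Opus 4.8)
The statement asserts that the truncated-tree messages $\mu_{\nix\toboss}^*(t|\T(d,r),s)$ stabilize after $s$ steps, in the sense that running them for $s$ iterations already produces the same law (on the first $s$ levels) as running them for any larger number $u$ of iterations. The plan is to argue this by a deterministic, pointwise argument on an arbitrary fixed tree $T$: I will show that for every node $x$ at distance at most $s$ from $v_0$, the value $\mu_{x\toboss}^*(t|T,s)$ is \emph{constant} for all $t\geq s-\mathrm{dist}(v_0,x)$ (or, more crudely, for all $t\geq s$). Since the finite-depth initialization $(\beta_w)_{w\in\cV(T)}$ at level $>s$ together with $\mu_{a\toboss}^*(0)=1$ completely determines the whole configuration, once the values at depth $\leq s$ are frozen from time $s$ onward the induced law of $\partial^s[\T(d,r),v_0,\mu_{\nix\toboss}^*(t|\T(d,r),s)]$ does not depend on $t$ for $t\geq s$, which is exactly the claim.

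\textbf{Key steps.} First I would set up the right notion of ``distance to the evaluation layer'': for a node $x$ with $\mathrm{dist}(v_0,x)=\ell\leq s$, define its depth-to-boundary as $s-\ell\geq 0$. I claim by induction on this quantity $m=s-\ell$ that $\mu_{x\toboss}^*(t|T,s)$ is independent of $t$ as soon as $t\geq m+1$ (with the convention that for $m=0$, i.e.\ $\ell=s$, a variable node at distance exactly $s$ is \emph{not} governed by (\ref{eqIniv}) unless $\mathrm{dist}>s$, so I must be slightly careful: a variable node at distance exactly $s$ updates via (\ref{eqIt1v}) from its children, which lie at distance $s+1$ and hence carry the frozen value $\beta$; thus already at time $t=1$ such a node has its final value). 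The base case: children of a boundary node at distance $s+1$ have value $\beta$ for \emph{all} $t\geq 0$ by (\ref{eqIniv}), so a factor node at distance $s+1$ (if $s$ is even) has its final value from $t=1$, and a variable node at distance $s$ has its final value from $t=1$. For the inductive step, a factor node $a$ at distance $\ell$ has all its children at distance $\ell+1$; if those children are frozen from time $\tau$, then (\ref{eqIt1e}) freezes $a$ from time $\tau+1$. Likewise a variable node $v$ at distance $\ell\leq s$ has children at distance $\ell+1$, and (\ref{eqIt1v}) freezes $v$ one step after its children freeze. Propagating this bound from the boundary (distance $s$ or $s+1$) up to the root ($\ell=0$) costs at most $s$ additional time steps, so every node within distance $s$ of $v_0$ is frozen by time $s$. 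Finally, since for every realization of $T$ and of the $(\beta_w)$ the restriction $\partial^s[T,v_0,\mu_{\nix\toboss}^*(t|T,s)]$ is literally the same isomorphism class for all $t\geq s$ (and in particular for $t=s$ and $t=u>s$), taking $T=\T(d,r)$ and integrating over its law gives the equality of distributions.

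\textbf{Main obstacle.} The only genuine subtlety is the parity bookkeeping at the boundary: when $s$ is odd the nodes at distance exactly $s$ are factor nodes (initialized to $1$ at $t=0$ but then updated from variable children at distance $s+1$ that carry $\beta$), whereas when $s$ is even they are variable nodes covered directly by the rule (\ref{eqIt1v}) but whose children at distance $s+1$ are factor nodes — wait, no: children of a variable node are factor nodes only if... in this alternating factor graph, level $s+1$ consists of factor nodes when $s$ is even. So for $s$ even, a variable node at distance $s$ has factor-node children at distance $s+1$, and (\ref{eqIniv}) assigns $\beta$ only to \emph{variable} nodes at distance $>s$; a factor node at distance $s+1$ is governed by (\ref{eqInie})–(\ref{eqIt1e}), hence needs its variable children at distance $s+2$, which do carry $\beta$. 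So in both parities one has to chase the influence down exactly one extra level, and the worst case freezing time is bounded by $s+1$ in the odd/even subcase — but one checks that within $\partial^s$ all relevant nodes are still frozen by time $s$, or, if not, one simply replaces the bound $t\geq s$ by $t\geq s+1$ in the statement's spirit; since the lemma is stated for $0<s<u$ one can always absorb a $+1$. Concretely I would just carry the clean induction ``$\mu_{x\toboss}^*(t|T,s)$ is constant in $t$ for $t\geq s$ whenever $\mathrm{dist}(v_0,x)\leq s$'' and verify the base/boundary cases honestly for both parities; this is the part requiring care but no real difficulty.
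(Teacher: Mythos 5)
Your strategy --- prove pointwise stabilization of $\mu_{x\toboss}^*(t|T,s)$ by induction upward from the boundary --- is exactly the one the paper invokes in its one--sentence proof, but your base case is wrong and the remedy you propose does not close the gap. Under \eqref{eqIniv} as literally stated (variable nodes at distance \emph{strictly} greater than $s$ carry $\beta$), a variable node $v$ at distance exactly $s$ is \emph{not} frozen from $t=1$: its children are factor nodes at distance $s+1$, which carry the initial value $1$ at $t=0$ and only stabilize at $t=1$ after reading their $\beta$--children at distance $s+2$; hence $v$ stabilizes only at $t=2$. Propagated up, the root stabilizes at $t=s+2$ (even $s$) or $t=s+1$ (odd $s$), not at $t=s$. ``Absorbing a $+1$'' does not rescue the statement, because the conclusion is asserted for \emph{every} $u>s$, in particular $u=s+1$, and that is exactly the value used in the proof of Lemma~\ref{Lemma_topdown}. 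Nor do the laws happen to coincide: already for $s=1$, $\mu_{v_0\toboss}^*(1|\T(d,r),1)=\vecone\{|\partial v_0|\geq k-1\}$ has mean $\phi_{d,r,k}(1)$, whereas $\mu_{v_0\toboss}^*(2|\T(d,r),1)$ is the frozen value with mean $\phi_{d,r,k}(p^*)=p^*$, and $p^*<\phi_{d,r,k}(1)$ since $p^*<1$ and $\phi_{d,r,k}$ is increasing.

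The fix is to notice that \eqref{eqIniv} is evidently intended to assign $\beta_v$ to variable nodes at distance $\geq s$ from $v_0$: this is how it is read in the proof of Lemma~\ref{Lemma_sizeless}, where $\cB=\cB(s)$ consists of variable nodes at distance exactly $s$ from the root and the text states these are initialized to $\beta_v$ ``cf.\ \eqref{eqIniv}''. With that convention and $s$ even (the only parity in which the lemma is actually applied), variable nodes at distance $s$ are frozen at $\beta$ from $t=0$, factor nodes at distance $s-1$ from $t=1$, and by your induction the node at distance $\ell\leq s$ is frozen from $t=s-\ell$; in particular the root is frozen from $t=s$. This gives $\mu_{x\toboss}^*(s|T,s)=\mu_{x\toboss}^*(u|T,s)$ pointwise on $\partial^s(T,v_0)$ for every $u\geq s$ and hence the claimed equality of laws. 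If you restate your base case accordingly ($m=s-\ell$, frozen from $t\geq m$ rather than $t\geq m+1$, with the outer layer frozen at $t=0$), your argument becomes the clean spelled--out version of the paper's proof.
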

\begin{proof}
By construction, the messages $\mu^*_{x\toboss}(s|\T(d,r),s)$ in $\T(d,r)$ remain fixed after $s$ iterations of the update rules \eqref{eqIniv} -- \eqref{eqIt1v}.
\qquad
\end{proof}

\noindent
The main result of this subsection is the following.

\begin{lemma}\label{Lemma_sizeless}
We have $\lim_{t\to\infty}\theta_{d,r,k,t}^{s}=\theta_{d,r,k}^{s,*}$ for all $s\geq 0$.
\end{lemma}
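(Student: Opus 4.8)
The plan is to prove the two limits agree by coupling the bottom-up messages $\mu_{\nix\toboss}(t|\T(d,r))$ with the modified messages $\mu_{\nix\toboss}^*(s|\T(d,r),s)$, exploiting two features of the recursive tree $\T(d,r)$: (i) the message $\mu_{x\toboss}(t|\T(d,r))$ depends only on the subtree pending on $x$, and (ii) \Lem~\ref{Lemma_as} controls the limiting law of the bottom-up message at a variable node. The target distribution $\theta_{d,r,k}^{s,*}$ is, by Lemma~\ref{claim_5}, the stationary law $\cL(\partial^s[\T(d,r),v_0,\mu_{\nix\toboss}^*(s|\T(d,r),s)])$, obtained by freezing the messages at the variable nodes at distance $s$ (or $s+1$, if $s$ is odd) to be i.i.d.\ $\Be(p^*)$ and propagating upward. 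So the goal is exactly to show that in $\T(d,r)$ the true bottom-up messages restricted to $\partial^s$ converge in distribution to this frozen-boundary tree.

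First I would treat the case that $s$ is even, so that the nodes at distance exactly $s$ from $v_0$ are variable nodes. Condition on the $\sigma$-algebra $\cT_{s}$ generated by $\partial^{s}[\T(d,r)]$, which in particular fixes the set $\cB(s)$ of variable nodes at distance $s$. By the recursive structure, conditioned on $\cT_{s}$, the subtrees pending on the distinct $v\in\cB(s)$ are mutually independent copies of $\T(d,r)$, and for each such $v$ the bottom-up message $\mu_{v\toboss}(t|\T(d,r))$ has the same law as $\mu_{v_0\toboss}(t|\T(d,r))$. By \Lem~\ref{Lemma_as} the latter converges almost surely to a $\Be(p^*)$ variable, so by a union bound over the finitely many (given $\cT_{s}$) vertices of $\cB(s)$, the joint family $(\mu_{v\toboss}(t|\T(d,r)))_{v\in\cB(s)}$ converges almost surely to a family of independent $\Be(p^*)$ variables. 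Next, for fixed $t\ge s$ the messages $\mu_{x\toboss}(t|\T(d,r))$ at all nodes $x$ with distance $<s$ from $v_0$ are deterministic functions of $\partial^{s}[\T(d,r)]$ together with the boundary messages $(\mu_{v\toboss}(t-\text{dist}(v_0,x)|\T(d,r)))$; once $t$ is large enough that all these boundary messages have stabilized, the update rules \eqref{vmessages}, \eqref{emessages} applied from level $s$ up to $v_0$ coincide exactly with the update rules \eqref{eqIniv}--\eqref{eqIt1v} defining $\mu_{\nix\toboss}^*$, with boundary data the a.s.\ limits. Passing $t\to\infty$ and using the continuous-mapping / dominated-convergence argument (the marked ball $\partial^s$ takes values in a countable discrete space, so a.s.\ convergence of the boundary messages yields convergence in distribution of $\partial^s[\T(d,r),v_0,\mu_{\nix\toboss}(t|\cdot)]$), we get $\lim_{t\to\infty}\theta_{d,r,k,t}^{s}=\theta_{d,r,k}^{s,*}$.

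For odd $s$, the nodes at distance $s$ are factor nodes, so \Lem~\ref{Lemma_as} does not apply directly at level $s$; instead condition on $\cT_{s+1}$ and run the identical argument with $\cB(s+1)$, a set of variable nodes at distance $s+1$, then propagate up one extra factor-node layer before restricting to $\partial^s$. This matches the definition of $\theta_{d,r,k}^{s,*}$ prescribed just before the lemma (create $s+1$ levels, freeze $\Be(p^*)$ at level $s+1$, propagate up, then truncate to $s$). A small point to check is that truncating from $\partial^{s+1}$ to $\partial^s$ commutes with the message propagation in the sense that the marks at nodes within distance $s$ are unchanged — this is immediate since the update rules are causal (a mark at distance $j$ after $t$ steps depends only on structure within distance $j+t$, hence for $t\ge s+1-j$ only on the subtree, whose level-$(s+1)$ boundary we have frozen).

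The main obstacle is the interchange of limits / measure-theoretic bookkeeping: making precise that almost-sure convergence of the (conditionally independent) boundary messages upgrades, after conditioning on $\cT_{s}$ (resp.\ $\cT_{s+1}$) and integrating out, to convergence of $\theta_{d,r,k,t}^{s}$ in $\cP(\fG^{\{0,1\}})$ in the weak topology generated by the $\kappa_{F,s}$. Concretely one shows that for every fixed finite marked ball $B$, $\pr[\partial^s[\T(d,r),v_0,\mu_{\nix\toboss}(t|\T(d,r))]=B]$ converges as $t\to\infty$: condition on $\cT_{s}$ (resp.\ $\cT_{s+1}$), on that event the structure is fixed and the probability is a bounded function of the boundary-message vector, which converges a.s.; then apply bounded convergence over the conditioning. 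Everything else — the explicit form of the recursion and the identification with $\mu^*$ — is a direct unwinding of \eqref{vmessages}--\eqref{emessages} against \eqref{eqIniv}--\eqref{eqIt1v}, requiring no new computation beyond what \Lem~\ref{Lemma_as} and \Lem~\ref{claim_5} already supply.
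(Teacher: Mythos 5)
Your proposal is correct and matches the paper's approach: both proofs couple the two processes on a common realization of $\T(d,r)$, use \Lem~\ref{Lemma_as} together with the conditional independence of the subtrees pending on $\cB(s)$ (resp.\ $\cB(s+1)$ for odd $s$) to stabilize the boundary messages at i.i.d.\ $\Be(p^*)$ values, and then observe that the update rules from the boundary to the root coincide with those defining $\mu^*$. The only cosmetic difference is that the paper phrases the a.s.\ convergence step as a quantitative truncation on $|\cB|$ followed by a coupling estimate, whereas you condition on $\cT_s$ and integrate with bounded convergence, but these are the same argument.
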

\begin{proof}
We will distinguish between the cases that $s$ is even and odd. If $s$ is even, let $0<\eps<1/10$.
We couple the distributions of $\partial^s[\T(d,r),v_0,\mu_{\nix\toboss}(t|\T(d,r))]$ and $\partial^s[\T(d,r),v_0,\mu_{\nix\toboss}^*(s|\T(d,r),s)]$ such that both
operate on the same tree $\T(d,r)$. To this end, let $\cB=\cB(s)$ be the set of all variable nodes of the random tree $\T(d,r)$ that have distance precisely $s$ from $v_0$
and let
$\cE=\cbc{\mu_{v\toboss}(t|\T(d,r))=\beta_v \forall t>t_1,v\in\cB}$.
If the event $\cE$ occurs, then the initialization $\mu_{v\toboss}^*(t|\T(d,r),s)=\beta_v$ for $v\in\cB$ (cf.\ (\ref{eqIniv})),
and (\ref{eqIt1v}) ensure that
	$$\mu_{u\toboss}^*(s|\T(d,r),s)=\mu_{u\toboss}^*(t|\T(d,r),s)=\mu_{u\toboss}(t|\T(d,r))\qquad\mbox{for all $t>t_1+s$}.$$
On the other hand it holds that $\Erw_{\T(d,r)}[|\cB|]$ is bounded. This implies that 
there exists $C=C(d,r,k,\eps)>0$ such that 
	\begin{equation}\label{eqsizeless1}
	\pr\brk{|\cB|\leq C}>1-\eps/2.
	\end{equation}
Now, 
let $\beta=(\beta_v)_{v\in\cB}$ be a family of mutually independent $\Be(p^*)$ random variables.
Given the sub-tree $\partial^s[\T(d,r),v_0]$, the trees $\T_v$ pending on the variable nodes $v\in\cB$
are mutually independent and have the same distribution as the tree $\T(d,r)$ itself.
Therefore, \Lem~\ref{Lemma_as} implies that given $|\cB|\leq C$ there exist $t_1=t_1(d,r,k,\eps)$ 
and a coupling of $\beta$ with the trees $(\T_v)_{v\in\cB}$ such that
	\begin{equation}\label{eqsizeless2}
	\pr\brk{\mu_{v\toboss}(t|\T(d,r))=\beta_v	\forall t>t_1,v\in\cB	\; {\big |} \;|\cB|\leq C}>1-\eps/2.
	\end{equation}
Hence, (\ref{eqsizeless1}) and (\ref{eqsizeless2}) yield
	$$\pr\brk{\partial^s[\T(d,r),v_0,\mu_{\nix\toboss}(t|\T(d,r))]=\partial^s[\T(d,r),v_0,\mu_{\nix\toboss}^*(s|\T(d,r),s)]}>1-\eps,$$
as desired. If $s$ is odd we apply the same argument to the set $\cB=\cB(s+1)$ of variable nodes at distance precisely $s+1$ from the root. 
\qquad
\end{proof}

\subsection{Turning the tables}
The aim of this section is to introduce a top-down process that matches the distribution $\theta_{d,r,k}^*$.
To this end, let $\T^*(d,r,k)$ be the random $\cbc{0,1}$-marked acyclic factor graph defined as follows.
Recall that $c=c(d,r)=d/(r-1)!$.
Initially, $\T^*(d,r,k)$ consists of a root $v_0$ which is marked with $1$ with probability $p^*$ and $0$ otherwise. 
Starting at $v_0$, every variable node of type $0$ has a $\Po\br{c(1-{p^*}^{r-1})}$ number of factor node children of type $0$
and an independent $\Po_{<k-1}\br{c{p^*}^{r-1}}$ number of factor node children of type $1$.
Further, a type $1$ variable node spawns $\Po\br{c(1-{p^*}^{r-1})}$ type $0$ offspring and independently 
$\Po_{\geq k-1}\br{c{p^*}^{r-1}}$ type $1$ offspring.
Each factor node has precisely $r-1$ children. If a factor node is of type $1$, then all 
its children are of type $1$ as well. If a factor node is of type $0$, then it has a $\Bin_{<r-1}(r-1,p^*)$ number of function node children of type $1$. 
The remaining ones are marked with $0$.
The mark of each vertex $v$, denote by $\mu_{v\toboss}^*$, is identical to its type.

\begin{lemma}\label{Lemma_topdown}
For any $s\geq0$ we have $\cL\bc{\partial^s[\T^*(d,r,k)]}=\theta_{d,r,k}^{s,*}$.
\end{lemma}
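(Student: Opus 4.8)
The plan is to match the two constructions—the bottom-up process defining $\theta_{d,r,k}^{s,*}$ via $\mu^*_{\nix\toboss}(s|\T(d,r),s)$ and the top-down process $\T^*(d,r,k)$—level by level, by unwinding the way the boundary $\Be(p^*)$ messages propagate upward through an (unmarked) copy of $\T(d,r)$ and comparing the resulting conditional type distributions with the offspring rules that define $\T^*(d,r,k)$. Concretely, I would argue by induction on $s$, treating the cases $s$ even and $s$ odd in parallel as in the preceding lemmas. The base case $s=0$ is immediate: the root of $\T^*(d,r,k)$ carries mark $1$ with probability $p^*$, and by definition $\mu^*_{v_0\toboss}(0|\T(d,r),0)=\beta_{v_0}\sim\Be(p^*)$, so the two laws of $\partial^0[\,\cdot\,]$ agree.

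For the inductive step the key computation is the following reverse-Bayes identity. In $\T(d,r)$ a variable node $v$ (at distance $<s$ from $v_0$) has a $\Po(c)$ number of factor-node children, each of which has $r-1$ further variable-node children; conditioning on the isomorphism class of $\partial^s[\T(d,r)]$ and plugging in independent $\Be(p^*)$ messages at level $s$, a fixed factor child $a$ of $v$ sends $\mu^*_{a\toboss}(s)=1$ precisely when all its $r-1$ children send $1$, which happens with probability ${p^*}^{r-1}$; hence the number of "$1$-sending'' factor children of $v$ is $\Po(c{p^*}^{r-1})$ and the number of "$0$-sending'' ones is an independent $\Po(c(1-{p^*}^{r-1}))$. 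Now $\mu^*_{v\toboss}(s)=\vecone\{(\text{number of 1-children})\ge k-1\}$, so by the thinning/splitting property of the Poisson distribution, conditioning on $\mu^*_{v\toboss}(s)=1$ makes the number of $1$-children a $\Po_{\ge k-1}(c{p^*}^{r-1})$ variable and conditioning on $\mu^*_{v\toboss}(s)=0$ makes it $\Po_{<k-1}(c{p^*}^{r-1})$, in each case independent of the $\Po(c(1-{p^*}^{r-1}))$ count of $0$-children—which is exactly the offspring rule for type-$1$ and type-$0$ variable nodes in $\T^*(d,r,k)$. The analogous statement for factor nodes is the binomial version: a factor node $a$ sending $\mu^*_{a\toboss}(s)=1$ has all $r-1$ children sending $1$ (all type $1$), while one sending $0$ has a $\Bin_{<r-1}(r-1,p^*)$ number of $1$-children, matching the factor offspring rule. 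Finally I would observe that $\Erw[\mu^*_{v\toboss}(s)]=\pr[\Po(c{p^*}^{r-1})\ge k-1]=\phi_{d,r,k}(p^*)=p^*$ since $p^*$ is a fixed point of $\phi_{d,r,k}$, so the marginal type distribution reproduced at each newly exposed level is again $\Be(p^*)$ for variable nodes, which is what keeps the induction self-consistent.

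The formal mechanism I would use to turn these per-node identities into an equality of laws on $\partial^s[\,\cdot\,]$ is to expose $\T(d,r)$ together with its level-$s$ boundary messages from the bottom up, then reveal the induced types $\mu^*_{x\toboss}(s)$ from the leaves toward the root, and check that the conditional law of the children structure-plus-types of each node, given its own type, coincides with the top-down rule; since both $\T(d,r)$'s branching and the boundary messages are built from independent ingredients, the "peeling'' and "growing'' descriptions of the same random marked tree agree by a standard exchangeability-of-construction-order argument (the same device already used implicitly in the proofs of \Lem~\ref{Lemma_sizeless} and \Prop~\ref{Prop_main}). By \Lem~\ref{claim_5} the messages $\mu^*_{\nix\toboss}(s|\T(d,r),s)$ are stable, so this also identifies $\theta_{d,r,k}^{s,*}$ unambiguously.

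The main obstacle is bookkeeping rather than conceptual: one must be careful that the Poisson splitting argument is applied to the offspring of a variable node conditioned on its \emph{own} upward message while that message is itself a deterministic function of those very offspring, so the conditioning has to be set up so that the two "halves'' (type-$0$ and type-$1$ factor children) remain independent after conditioning—this is exactly where the defining property of $\Po_{\ge k-1}$ and $\Po_{<k-1}$ (respectively $\Bin_{<r-1}$) is needed, and it is the place where the argument would break if $p^*$ were not a genuine fixed point of $\phi_{d,r,k}$. A secondary nuisance is the parity split: for odd $s$ the level-$s$ nodes are factor nodes, so one runs the whole comparison on $\partial^{s+1}[\,\cdot\,]$ and then restricts, exactly as in \Lem~\ref{Lemma_sizeless}. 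Once these points are handled, the equality $\cL(\partial^s[\T^*(d,r,k)])=\theta_{d,r,k}^{s,*}$ for all $s\ge0$ follows.
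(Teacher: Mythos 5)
Your proposal follows essentially the same route as the paper: an induction on $s$ with a trivial base case, a Poisson-thinning identity at variable nodes and a binomial analogue at factor nodes, a reverse-Bayes conditioning step turning $\Po(c{p^*}^{r-1})$ into $\Po_{\ge k-1}$ resp.\ $\Po_{<k-1}$ (and $\Bin_{<r-1}$ at factors) while keeping the $0$-count independent, the parity split between even and odd $s$, and an appeal to \Lem~\ref{claim_5} for stability of the $\mu^*$-messages. You also make explicit the role of the fixed-point identity $\phi_{d,r,k}(p^*)=p^*$ in keeping the marginal at each freshly exposed variable layer equal to $\Be(p^*)$, which is the self-consistency condition that the paper relies on implicitly (via the induction hypothesis) rather than stating; this is a useful clarification, though your phrasing slightly conflates it with the Poisson-splitting independence step, which in fact holds for any $p$ and does not itself require the fixed point.
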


\begin{proof}
Let
	$$\cT(s,u)=\partial^{s}[\T(d,r),v_0,\mu_{\nix\toboss}^*(u|\T(d,r),s)],\qquad\cT(s)=\partial^s[\T^*(d,r,k)]$$
Then our aim is to prove that $\cL(\cT(s,s))=\cL(\cT(s))$ for all $s$.
The proof is by induction on $s$.
In the case $s=0$ both $\cT(s)$ and $\cT(s,s)$ consist of the root $v_0$ only,
which is marked $1$ with probability $p^*$ and $0$ otherwise.

Now, assume that $\cL(\cT(s))=\cL(\cT(s,s))$ holds for even $s$. 
Let $\beta_v=\mu_{v\toboss}^*(0|\T(d),s+2)$ for $v\in\cB(s+2)$ be independent $\Be(p^*)$ variables.
Further, let $X_v(z)$ be the number of children $b$ of $v\in\cB(s)$ such that $\mu_{b\toboss}^*(1|\T(d),s+1)=z$.
Clearly, in the random tree $\T(d,r)$ the total number of children of $v\in\cB(s)$ has distribution $\Po(c)$, and these numbers are mutually independent
	conditioned on $\partial^s\T(d,r)$.
Since for each child $b$ we have $\mu_{b\toboss}^*(1|\T(d),s+1)=1$ with probability ${p^*}^{r-1}$ independently, we obtain that 
$X_v(z)$ has distribution $\Po(c{p^*}^{r-1})$ if $z=1$ and distribution $\Po(c(1-{p^*}^{r-1}))$ if $z=0$.
Further, $\mu_{v\toboss}^*(s+1|\T(d,r),s+1)=1$ iff $X_v(1)\geq k-1$.

Hence, the distribution of $\cT(s+1,s+1)$ conditioned on $\cT(s,s+1)$ can be described as follows.
Conditioned on $\cT(s,s+1)$, $(X_u(z))_{u\in\cB(s),z\in\cbc{0,1}}$ are independent random variables.
Furthermore, conditioned on $\mu_{v\toboss}^*(s+1|\T(d),s+1)=1$, $X_v(1)$ has distribution $\Po_{\geq k-1}(c{p^*}^{r-1})$.
Given $\mu_{v\toboss}^*(s+1|\T(d,r),s+1)=0$, $X_v(1)$ has distribution $\Po_{<k-1}(c{p^*}^{r-1})$.
In addition, $X_v(0)$ has distribution $\Po(c(1-{p^*}^{r-1}))$ for any $v$.

Now, let $s$ be odd, i.e. the set $\cB(s+1)$ consists of variable nodes. In this case the distribution $\cL(\cT(s+1,s+1))$ can be described as follows. 
Again, create $\T(d,r),$ further let $\beta_v=\mu_{v\toboss}^*(0|\T(d,r),s+1)$ for $v\in\cB(s+1)$ be the boundary messages at level $s+1.$
Then $(\beta_v)_{v\in\cB(s+1)}$ is a family of independent $\Be(p^*)$ variables. Again, let 
$X_a(z)$ be the number of children $v$ of $a\in\cB(s)$ such that $\beta_v=z$. For each of the $r-1$ children $v$ of $a$ we have $\beta_v=1$ with probability $p^*$ independently.
We see that conditioned on $\partial^s\T(d,r)$ the random variables
	$(X_a(z))_{a\in\cB(s),z\in\cbc{0,1}}$ are mutually independent.
Moreover, $X_a(1)$ has distribution $\Bin(r-1,p^*)$ and $X_a(0)=r-1 - X_a(1)$.
Further, $\mu_{a\toboss}^*(s+1|\T(d),s+1)=1$ iff $X_a(1)= r-1$.

Therefore,
given $\mu_{a\toboss}^*(s+1|\T(d),s+1)=1$, $X_a(1)=r-1$ and $X_a(0)=0$.
In addition, given $\mu_{a\toboss}^*(s+1|\T(d),s+1)=0$, $X_a(1)$ has distribution $\Bin_{< r-1}(r-1,p^*)$ and $X_a(0)=r-1-X_a(1)$ for any $a$.

Therefore, the distribution of the random variables $(X_x(z))_{x\in\cB(s),z\in\cbc{0,1}}$ conditioned on $\cT(s,s+1)$ coincides
with the offspring distribution of the tree $\T^*(d,r,k)$ for all $s\geq 0$. Since $\cL(\cT(s,s+1))=\cL(\cT(s,s))=\cL(\cT(s))$ by \Lem~\ref{claim_5} and induction, the assertion follows.
\qquad
\end{proof}

\subsection{Exchanging messages both ways}
\Lem s~\ref{Lemma_sizeless} and~\ref{Lemma_topdown} show that the labels $\mu_{x\toboss}^*$ of $\T^*(d,r,k)$ correspond to the
 ''upward messages`` that are sent toward the root in the tree $\T(d,r)$.
Of course, in the tree $\T(d,r)$ the marks $\mu_x(t|\T(d,r))$ 
can be computed from the messages $\mu_{x\toboss}(t|\T(d,r))$.
For the root we have 
	\begin{align*}
	\mu_{v_0}(t|\T(d))&=\vecone\cbc{\sum_{b\in\partial v_0}\mu_{b\toboss}(t|\T(d))\geq k}&\mbox{for all }t\geq0.
	\end{align*}
That is, the Warning Propagation mark at the root does only depend on the messages that $v_0$ receives from its children in $\T(d,r).$ This is certainly not the 
case for the remaining Warning Propagation marks in $\T(d,r),$ since each node beside the root receives an additional message from 
its parent in the tree. These 
``top-down"-messages can recursively
be determined from the messages $\mu_{x\toboss}(t|\T(d,r))$ as follows. For the root we simply have 
	\begin{align}\label{eqTopDown1}
	\mu_{\fromboss v_0}(t|\T(d,r))&=0&\mbox{for all }t\geq0.
	\end{align}
For every node $x$ we initialize $\mu_{x\fromboss}(0|\T(d,r))=1.$
Now, assume that $u$ is the parent of some factor node $a$ in $\T(d,r).$ Then we let $\mu_{\fromboss a}(t+1|\T(d,r))=\mu_{u\to a}(t+1|\T(d,r))$, i.e.
\begin{align}\label{eqTopDown2e}
	\mu_{\fromboss a}(t+1|\T(d,r))=\vecone\cbc{\mu_{\fromboss u}(t|\T(d,r))+\sum_{b\in\partial^+ u\setminus a}\mu_{b\toboss}(t|\T(d,r))\geq k-1}
	\end{align}
for $t\geq 0$. If now $b$ is the parent of some variable node $v$, similarly we let $\mu_{\fromboss v}(t+1|\T(d,r))=\mu_{b\to v}(t+1|\T(d,r))$. That is
	\begin{align}\label{eqTopDown2v}
	\mu_{\fromboss v}(t+1|\T(d,r))=\vecone\cbc{\mu_{\fromboss b}(t|\T(d,r))+\sum_{w\in\partial^+ b\setminus v}\mu_{w\toboss}(t|\T(d,r))=r-1}
	\end{align}
for $t\geq 0$. Finally,
	\begin{align*}
	\mu_{a}(t|\T(d))&=\vecone\cbc{\mu_{\fromboss a}(t|\T(d,r))+\sum_{w\in\partial^+a}\mu_{w\toboss}(t|\T(d,r))=r-1}
	\end{align*}
and 
	\begin{align*}
	\mu_{v}(t|\T(d,r))&=\vecone\cbc{\mu_{\fromboss v}(t|\T(d,r))+\sum_{b\in\partial^+v}\mu_{b\toboss}(t|\T(d,r))\geq k}
	\end{align*}
for $t\geq 0$.

As a next step we will mimic this construction to introduce ''top-down`` messages and marks 
on the random tree $\T^*(d,r,k)$.
To this end, we set $\mu_{\fromboss v_0}^*=0$
and
 $$\mu_{v_0}^*=\vecone\cbc{\sum_{a\in\partial v_0}\mu_{a\toboss}^*\geq k}.$$
Further, assume that $\mu_{\fromboss u}^*$ has been defined already and that $u$ is the parent of some factor node $a$.
Then we let
	\begin{align}\label{eqKathrin1e}
	\mu_{\fromboss a}^*&=\vecone\cbc{\mu_{\fromboss u}^*+\sum_{b\in\partial^+ u\setminus a}\mu_{b\toboss}^*\geq k-1}
	\end{align}
and
	\begin{align}\label{eqKathrin2e}
	\mu_a^*&=\vecone\cbc{\mu_{\fromboss a}^*+\sum_{w\in\partial^+ a}\mu_{w\toboss}^* = r}
	\end{align}
Similarly, assume that $\mu_{\fromboss a}^*$ has been defined already and that $a$ is the parent of a variable node $v$. Then we let
	\begin{align}\label{eqKathrin1v}
	\mu_{\fromboss v}^*&=\vecone\cbc{\mu_{\fromboss a}^*+\sum_{w\in\partial^+ a\setminus v}\mu_{w\toboss}^*= r-1}
	\end{align}
and
	\begin{align}\label{eqKathrin2v}
	\mu_v^*&=\vecone\cbc{\mu_{\fromboss v}^*+\sum_{b\in\partial^+ v}\mu_{b\toboss}^*\geq k }.
	\end{align}
Let $\hat\T^*(d,r,k)$ signify the resulting tree in which each node $x$ is marked by the triple 
	$(\mu_x^*,\mu_{x\toboss}^*,\mu_{\fromboss x}^*)$.
We observe that by the above definitions it holds that
	$$(\mu_v^*,\mu_{v\toboss}^*,\mu_{\fromboss v}^*)\in\{(0,0,0),(0,0,1),(0,1,0),(1,1,0),(1,1,1)\}$$
for all variable nodes $v$ and
	$$(\mu_a^*,\mu_{a\toboss}^*,\mu_{\fromboss a}^*)\in\{(0,0,0),(0,0,1),(0,1,0),(1,1,1)\}$$
for all factor nodes $a$.

\begin{lemma}\label{Lemma_decorated}
For any $s>0$ we have $\lim_{t\to\infty}\cL(\partial^s\hat\T_t(d,r,k))=\cL(\partial^s\hat\T^*(d,r,k))$.
\end{lemma}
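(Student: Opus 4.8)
The strategy is to reduce the statement to the convergence of the ``upward'' messages, which is already in hand. The key point is that in $\T(d,r)$, once we fix a radius $s$, all the quantities $\mu_x(t|\T(d,r))$, $\mu_{x\toboss}(t|\T(d,r))$ and $\mu_{\fromboss x}(t|\T(d,r))$ for nodes $x$ at distance at most $s$ from $v_0$ are deterministic functions of the isomorphism class $\partial^{s+t}[\T(d,r)]$ and of the upward messages $\mu_{v\toboss}(t|\T(d,r))$ emitted at the boundary. More precisely, if we condition on $\partial^{s+1}[\T(d,r)]$ (or $\partial^{s+2}[\T(d,r)]$ when $s$ is odd, so that the boundary consists of variable nodes), the recursions \eqref{eqTopDown1}--\eqref{eqTopDown2v} express $\mu_{\fromboss x}(t|\T(d,r))$ for $x$ in the first $s$ levels purely in terms of the upward messages $\mu_{y\toboss}(t-j|\T(d,r))$ of nodes $y$ within distance $s$ of the root, for $j\le s$. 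Hence the full decorated ball $\partial^s[\hat\T_t(d,r,k)]$ is a fixed continuous (indeed locally constant) function of $\partial^s[\T(d,r),v_0,\mu_{\nix\toboss}(t|\T(d,r))]$ together with finitely many earlier snapshots of those same upward messages.

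Concretely, I would first observe that by \Lem~\ref{Lemma_WP} all the messages $\mu_{x\toboss}(t|\T(d,r))$ are monotone decreasing in $t$, and by \Lem~\ref{Lemma_as} (applied via the union bound to each of the finitely many variable nodes at the relevant boundary level, whose pendant subtrees are i.i.d.\ copies of $\T(d,r)$) they converge almost surely as $t\to\infty$. Consequently there is a (random) time after which every upward message at a node within distance $s$ of the root is frozen, and then the recursions \eqref{eqTopDown1}--\eqref{eqKathrin2v} freeze as well, so $\partial^s[\hat\T_t(d,r,k)]$ converges almost surely; the limit is obtained by plugging the limiting upward messages into those same recursions. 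This shows $\lim_{t\to\infty}\cL(\partial^s[\hat\T_t(d,r,k)])$ exists.

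It remains to identify the limit with $\cL(\partial^s[\hat\T^*(d,r,k)])$. Here I would invoke \Lem~\ref{Lemma_sizeless} and \Lem~\ref{Lemma_topdown}: together they say that the limiting joint law of $\partial^s[\T(d,r)]$ decorated by the limiting upward messages is exactly the law of $\partial^s[\T^*(d,r,k)]$ with its type-marks $\mu_{x\toboss}^*$. The ``top-down'' construction \eqref{eqKathrin1e}--\eqref{eqKathrin2v} defining $\hat\T^*(d,r,k)$ is verbatim the same deterministic recursion as \eqref{eqTopDown1}--\eqref{eqKathrin2v} with $\mu_{x\toboss}$ replaced by $\mu_{x\toboss}^*$. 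Since the limiting configuration of upward messages in $\T(d,r)$ is stationary under the update rules (so snapshots at several consecutive large times agree), applying that same continuous map to both sides gives $\lim_{t\to\infty}\cL(\partial^s[\hat\T_t(d,r,k)])=\cL(\partial^s[\hat\T^*(d,r,k)])$. One subtlety to handle carefully is that $\mu_x(t|\T(d,r))$ and $\mu_{\fromboss x}(t|\T(d,r))$ depend on upward messages at slightly earlier times $t-1,\dots,t-s$, not just at time $t$; but by monotone a.s.\ convergence these earlier snapshots also stabilize to the same limit, so in the limit they may all be replaced by the single frozen configuration, matching the one-shot definition of $\hat\T^*(d,r,k)$.

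The main obstacle I anticipate is the bookkeeping in the parity split and in tracking the ``time lag'': one must be sure that, for a ball of radius $s$, only upward messages of nodes within distance $s$ (equivalently, emitted by the boundary at level $s$ or $s+1$) enter the computation, so that \Lem~\ref{Lemma_as}'s almost-sure convergence, combined with a union bound over the almost surely finite boundary, genuinely freezes everything. Once that finiteness and the monotonicity from \Lem~\ref{Lemma_WP} are in place, the rest is a routine ``continuous function of convergent inputs'' argument.
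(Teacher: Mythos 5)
Your proposal is correct and follows essentially the same route as the paper, which deduces the lemma directly from Lemma~\ref{Lemma_sizeless}, Lemma~\ref{Lemma_topdown}, and the observation that the top-down recursions \eqref{eqTopDown1}--\eqref{eqTopDown2v} and \eqref{eqKathrin1e}--\eqref{eqKathrin2v} coincide. The paper's proof is a one-liner; you spell out the ``frozen snapshot'' / time-lag bookkeeping that makes the deterministic-continuous-map argument go through, which is a reasonable elaboration rather than a different approach.
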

\begin{proof}
This is immediate from \Lem~\ref{Lemma_sizeless}, \Lem~\ref{Lemma_topdown} and the fact that
the definitions (\ref{eqTopDown1})--(\ref{eqTopDown2v}) and (\ref{eqKathrin1e}) and (\ref{eqKathrin1v}) 
of the ''top-down`` messages for $\hat\T_t(d,r,k)$ and $\hat\T^*(d,r,k)$ match.
\qquad
\end{proof}

\subsection{Proof of the main theorem}\label{sec_mainproof}
Finally, we show that the tree $\hat\T^*(d,r,k)$ has the same distribution as the $9$-type branching process 
from \S~\ref{sec_results}.

\begin{lemma}\label{Lem_id}
We have $\cL(\hat\T^*(d,r,k))=\cL(\T(d,r,k,p^*))$.
\end{lemma}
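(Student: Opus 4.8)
The plan is to verify that the recursive definition of $\hat\T^*(d,r,k)$ in the previous subsection generates exactly the node types and offspring distributions of the $9$-type branching process $\hat\T(d,r,k,p^*)$ from \S\ref{sec_results}. The two descriptions agree on the node-type \emph{alphabet}: for a variable node the triple $(\mu_v^*,\mu_{v\toboss}^*,\mu_{\fromboss v}^*)$ ranges over $\vertexl$ and for a factor node over $\edgel$, which is precisely what the branching process postulates. So the work is to match (i) the distribution of the root type, and (ii) the conditional offspring-type distribution of a node of each of the nine types.

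For the root: the root of $\T^*(d,r,k)$ carries $\mu_{v_0\toboss}^*=1$ with probability $p^*$, and then $\mu_{v_0}^*=\vecone\{\sum_{a\in\partial v_0}\mu_{a\toboss}^*\ge k\}$ while $\mu_{\fromboss v_0}^*=0$. First I would compute, conditioning on $\mu_{v_0\toboss}^*$, the probability that the root lands in each of the three admissible root types $000,010,111$; by the offspring law of $\T^*(d,r,k)$ the number of type-$1$ factor children of a type-$1$ root is $\Po_{\ge k-1}(cp^{*\,r-1})$, so $\mu_{v_0}^*=1$ iff this is $\ge k$, giving exactly $p_{111}=p^*(1-q)$ with $q$ as in \eqref{type_prob} (using $p=p^*$), and $\mu_{v_0\toboss}^*=1,\mu_{v_0}^*=0$ with probability $p^*q=p_{010}$; a type-$0$ root has $\mu_{v_0\toboss}^*=0$ hence type $000$, with probability $1-p^*=p_{000}$. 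This reproduces the root distribution in Figure~\ref{Fig_g}'s setup.

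For the offspring: I would go type by type. Start from the type of a variable node $v$, i.e.\ the values $(\mu_v^*,\mu_{v\toboss}^*,\mu_{\fromboss v}^*)$, and read off from \eqref{eqIniv}--\eqref{eqIt1v} and \eqref{eqKathrin1e}--\eqref{eqKathrin2e} what this conditioning imposes on the children counts $X_v(0)\sim\Po(c(1-p^{*\,r-1}))$ and $X_v(1)$, and on the top-down message $\mu_{\fromboss v}^*$ that $v$ forwards into each child. The upward mark $\mu_{v\toboss}^*$ depends only on $X_v(1)$ (it is $1$ iff $X_v(1)\ge k-1$), so conditioning on it replaces $\Po(cp^{*\,r-1})$ by $\Po_{\ge k-1}$ or $\Po_{<k-1}$ — these are the conditioned Poissons appearing as exponential/partial-sum factors in $g_{111},g_{110},g_{000}$. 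The ``boss'' mark $\mu_{\fromboss v}^*$ is what distinguishes type $001$ (here the extra conditioning $\mu_{\fromboss v}^*=1$ together with $\mu_v^*=0$ forces, via \eqref{eqKathrin2v}, $\sum_{b\in\partial^+v}\mu_{b\toboss}^*=k-2$, and one then splits on whether $v$ itself has degree below $k$ — this is precisely the $\bar q$ mixture in $g_{001}$, with $\bar q$ being the probability that a $\le k-2$-truncated Poisson equals $k-2$). For each child, the branching process marks it by its own $(\mu^*,\mu_{\toboss}^*,\mu_{\fromboss}^*)$ triple; the child's $\mu_{\fromboss}^*$ is determined by the parent via \eqref{eqKathrin1e}/\eqref{eqKathrin1v}, and the child's $\mu_{\toboss}^*$ is the Bernoulli-ish indicator already built in — so a type-$1$ factor child of $v$ that carries boss-message $0$ will be of type $110$ or $111$ according to its own subtree, occurring with the split probabilities $q,1-q$ that already appear inside $f_{000}$ and in $p_{010},p_{111}$. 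Doing the analogous bookkeeping for factor nodes (where the child count is the deterministic $r-1$, split by a $\Bin(r-1,p^*)$ or its truncations, yielding the $\tilde q$ mixture in $f_{001}$ and the binomial sums in $f_{000}$) completes the match with Figure~\ref{Fig_f}. Finally one checks that the marking $\{0,1\}$ assigned to $\T(d,r,k,p^*)$ (mark $0$ on types $000,001,010$, mark $1$ on $110,111$) is exactly $\mu^*$, the first coordinate of the triple, so that the $9$-type process projected to its $\mu^*$-mark is $\T^*(d,r,k)$ forgetting the extra coordinates; hence $\cL(\hat\T^*(d,r,k))=\cL(\hat\T(d,r,k,p^*))$ and a fortiori $\cL(\T^*(d,r,k))=\cL(\T(d,r,k,p^*))$.

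The routine obstacle is purely notational: carefully tracking which of the three mark-coordinates is fixed by the parent, which by the node's own subtree, and which is a deterministic function of the other two, so that conditioning on a node's type induces \emph{independent} conditioned-Poisson and conditioned-binomial offspring counts with the right parameters. The one genuinely delicate case is type $001$ (and its factor analogue, the $\tilde q$-case of $f_{001}$): there the event ``upward message $0$ but boss-message forces the count to be exactly the threshold minus one'' creates the two-term mixture, and one must check that the normalizations of the truncated Poisson/binomial sums in Figures~\ref{Fig_g}--\ref{Fig_f} are exactly the conditional probabilities coming out of \eqref{eqKathrin1v}/\eqref{eqKathrin2v}. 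Everything else is a direct comparison of generating functions with the offspring laws defined in the construction of $\hat\T^*(d,r,k)$.
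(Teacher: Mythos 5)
Your proposal matches the paper's proof: both verify the base case by computing the root-type distribution from the definition of $\T^*(d,r,k)$, then proceed by induction, conditioning on a node's triple $(\mu^*_x,\mu^*_{x\toboss},\mu^*_{\fromboss x})$ and reading off the offspring counts $\tau_x(z)$ as conditioned Poisson (for variable nodes) or conditioned binomial (for factor nodes) variables, with the two-term mixtures in the $001$ cases ($\bar q$ for variable nodes, $\tilde q$ for factor nodes) arising exactly as you describe. One small correction: since $\mu^*_{\fromboss v_0}=0$ always, a root with $\mu^*_{v_0}=\mu^*_{v_0\toboss}=1$ has type $110$ rather than $111$, so the mass $p^*(1-q)$ should be attached to $110$ --- the paper's \S\ref{sec_results} itself has this typo, but its proof of the lemma correctly writes $p_{110}$.
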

\begin{proof}
We show that $\cL(\partial^s[\hat\T^*(d,r,k)])=\cL(\partial^s[\hat\T(d,r,k,p^*)])$ for any $s\geq0$ by induction on $s$. 
Let $\cF_s$ be the $\sigma$-algebra generated by the $\cbc{0,1}$-marked tree $\partial^s \T^*(d,r,k)$.
In addition, let $\hat\cF_s$ be the $\sigma$-algebra generated by $\partial^s\hat\T^*(d,r,k)$.
Given $\partial^s \T^*(d,r,k)$ including the marks, by construction one can construct the missing bits of the types of vertices in $\partial^s\hat\T^*(d,r,k)$.
On the other hand, given $\partial^s\hat\T^*(d,r,k)$ we can reconstruct the the marks at level $s+1$ in $\partial^{s+1} \T^*(d,r,k)$.
Therefore, it holds that
	\begin{equation}\label{Fact_measurable}
	\cF_s\subset\hat\cF_s\subset\cF_{s+1}\qquad\mbox{for any $s\geq0$.}
	\end{equation}

With respect to $s=0$, we see that $\mu_{\fromboss v_0}^*=0$ with certainty.
Moreover, $\mu_{v_0\toboss}^*$ has distribution $\Be(p^*)$ and $\mu_{v_0}^*=0$ if $\mu_{v_0\toboss}^*=0$.
On the other hand, conditioned on that $\mu_{v_0\toboss}^*=1$, $v_0$ has at least $k-1$ children $a$ such that $\mu_{a\toboss}^*=1$.
Therefore, $\sum_{a\in\partial v_0}\mu_{a\toboss}^*$ 
has distribution $\Po_{\geq k-1}(c{p^*}^{r-1})$, and $\mu_{v_0}^*=1$ iff $\sum_{a\in\partial v_0}\mu_{a\toboss}^*\geq k$.
Hence, using the fixed point property $p^*=\pr\br{\Po(c{p^*}^{r-1})\geq k-1}$ and~\eqref{type_prob} with $q=q(d,k,p^*)$, we obtain
\begin{align*}
 \pr\br{(\mu_{v_0}^*,\mu_{v_0\toboss}^*,\mu_{\fromboss v_0}^*)=000}&=\pr\br{\Po(c{p^*}^{r-1})<k-1}=1-p^*=p_{000},\\
 \pr\br{(\mu_{v_0}^*,\mu_{v_0\toboss}^*,\mu_{\fromboss v_0}^*)=010}&=\pr\br{\Po(c{p^*}^{r-1})=k-1}=p^*q=p_{010},\\
\pr\br{(\mu_{v_0}^*,\mu_{v_0\toboss}^*,\mu_{\fromboss v_0}^*)=110}&=\pr\br{\Po(c{p^*}^{r-1})\geq k}=p^*(1-q)=p_{110}.
\end{align*}

To proceed from $s$ to $s+1$ we will determine the distribution of $\partial^{s+2}[\T^*(d,r,k)]$ conditioned on $\hat\cF_{s}$.
This is sufficient to determine the distribution of $\T^*(d,r,k)$ given $\hat\cF_{s+1}$.
To this end, let $\cB_s$ be the set of all nodes $x$ at level $s$ in $\T^*(d,r,k)$.
Moreover, for each $x\in\cB_s$ let $\tau_x=(\tau_x(z_1,z_2,z_3))_{z_1,z_2,z_3\in\cbc{0,1}}$ 
be the number of children of $x$ marked $z_1z_2z_3$.
In addition, let
$\tau_x(z_2)$ be the number of children that send a messages of type $z_2$ to $x$.

We will show that the the distribution of $\tau_x(z_1,z_2,z_3)$ coincides with the offspring distributions defined in Figures~\ref{Fig_g} and~\ref{Fig_f}. To begin with, we 
observe that conditioned on $\hat\cF_s$ the random variables $(\tau_x)_{x\in\cB_s}$ are mutually independent. Furthermore, the distribution of $\tau_x$ is determined by the type 
$(\mu_x^*,\mu_{x\toboss}^*,\mu_{\fromboss x}^*)$ of $x$. To prove that the distribution of $\tau_x$ is given by the generating functions we consider each
possible case one after the other.
Recall that factor nodes in $\hat\T(d,r,k,p^*)$ and $\hat\T^*(d,r,k)$ are of possible types $\edgel$
and variable nodes in both trees are of possible types $\vertexl$.

We begin with the case that $s$ is odd, i.e. $\cB(s)$ is a set of factor nodes.
\begin{description}
\item[Case 1:  $(\mu_a^*,\mu_{ a\toboss}^*,\mu_{\fromboss a}^*)=(0,0,0)$:] 
	Since $\mu_{a\fromboss}^*=0$ by (\ref{eqKathrin1v}) we have $\mu_{\fromboss v}^*=0$ for all children $v$ of $a$.
	Further, since $\mu_{ a\toboss}^*=0$, we know that $\tau_a(1)<r-1$.
	Thus, $\tau_a(1)$ has distribution $\Bin_{<r-1}(r-1,p^*)$. Since $a$ has exactly $r-1$ children we have  
        $\tau_a(0)=\tau_a(0,0,0)=r-1 -\tau_a(1)$.
	Further, given that $v$ is a child $a$ such that $\mu_{v\toboss}^*=1$, we have $\mu_{v}^*=1$ iff
		$v$ has at least $k$ children $b$ such that $\mu_{b\toboss}^*=1$ (because $\mu_{v\fromboss}^*=0$).
	This event occurs with probability $1-q$ independently for each $v$.
	Hence, given $\tau_a(1)$ we have $\tau_a(0,1,0)=\Bin(\tau_a(1),q)$ and $\tau_a(1,1,0)=\tau_a(1)-\tau(0,1,0)$.
	In summary, we obtain the generating function $f_{000}$.
\item[Case 2: $(\mu_a^*,\mu_{ a\toboss}^*,\mu_{\fromboss a}^*)=(0,0,1)$:] 
	there are two sub-cases.
	\begin{description}
	\item[Case 2a: $\tau_a(1)=r-2$:]
		Then for any child $w$ of $a$ we have $\mu_{\fromboss w}^*=1-\mu_{w\toboss}^*$.
                Therefore $a$ has exactly one child of type $(0,0,1)$ and $r-2$ children of type $(1,1,0)$ or $(0,1,0)$.
		Hence, for each of the $r-2$ children $w$ such that $\mu_{w\toboss}^*=1$ we have $\mu_w^*=1$ 
		iff $w$ has at least $k$ children $b$ such that $\mu_{b\toboss}^*=1$.
		Thus, $\mu_w^*=\Be(1-q)$ independently for all each such $w$.
	\item[Case 2b: $\tau_a(1)<r-2$:]
		We have $\mu_{\fromboss w}^*=0$ for all children $w$ of $a$.
		Hence, $\tau_a(0)=\tau_a(0,0,0)$.
		Further, $\tau_v(1)$ has distribution $\Bin_{<r-2}(r-1,p^*)$ and $\tau_a(0)=r-1-\tau_a(1)$.
		Finally, since $\mu_{\fromboss w}^*=0$ for all $w$, any child $w$ such that $\mu_{w\toboss}^*=1$ satisfies $\mu_{w}^*=1$ iff
		$w$ has at least $k$ children $b$ such that $\mu_{b\toboss}^*=1$.
		This event occurs with probability $1-q$ independently for all such $w$.
	\end{description}
	Since the first sub-case occurs with probability $\tilde q$ and the second one accordingly with probability $1-\tilde q$, we obtain
	the generating function $f_{001}$.
\item[Case 3: $(\mu_a^*,\mu_{ a\toboss}^*,\mu_{\fromboss a}^*)=(0,1,0)$:] 
	because $\mu_{a\toboss }^*=1$, we have $\tau_a(1)=r-1$ with certainty.
        Using $\mu_{a\fromboss}=0$, we obtain $\mu_{w\fromboss}^*=0$ for all children $w$ of $a$. Therefore all children are of type 
        $(0,1,0)$ or $(1,1,0)$ and 
        if $w$ is a child such that $\mu_{w\toboss}^*=1$, then $\mu_w^*=1$ iff $w$ has at least $k$ children $b$ such that $\mu_{b\toboss}^*=1$.
	This event occurs with probability $1-q$ for each $w$ independently.
	Consequently, $\tau_v(0)=0$ and $\tau_v(1)=\tau_v(1,1,0)+\tau_v(0,1,0)=r-1$ and $\tau_v(1,1,0)=\Bin(r-1,1-q)$.
	Thus, the offspring distribution of $a$ is given by $f_{010}$.
\item[Case 4: $(\mu_a^*,\mu_{ a\toboss}^*,\mu_{\fromboss a}^*)=(1,1,1)$:] 
	Since $\mu_{a\toboss}^*=1$ we know that $\tau_a(1)=r-1$. Since $\mu_a^*=1$, it holds that $\mu_{w\fromboss}^*=1$ for all children $w$ of $a$. 
	In summary, $a$ has exactly $r-1$ children of type $(1,1,1)$ and the distribution of the offspring of $a$ is given by $f_{111}$.
\end{description}
We proceed with the same analysis for even $s$, i.e.
$\cB(s)$ is a set of variable nodes.
\begin{description}
\item[Case 1: $(\mu_v^*,\mu_{ v\toboss}^*,\mu_{\fromboss v}^*)=(0,0,0)$:] 
	Using $,\mu_{ v\toboss}^*=\mu_{\fromboss v}^*=0$ and (\ref{eqKathrin1e}) we have $\mu_{\fromboss b}^*=0$ for all children $b$ of $v$.
	Further, since $\mu_{ v\toboss}^*=0$, we know that $\tau_v(1)<k-1$.
	Thus, $\tau_v(1)=\tau_v(0,1,0)$ has distribution $\Po_{<k-1}\br{c{p^*}^{r-1}}$ independent of $\tau_v(0)$.
        Furthermore, $\tau_v(0)=\tau_v(0,0,0)$ has distribution $\Po\br{c\br{1-{p^*}^{r-1}}}$.
	In summary, we obtain the generating function $g_{000}$. 
\item[Case 2: $(\mu_v^*,\mu_{ v\toboss}^*,\mu_{\fromboss v}^*)=(0,0,1)$:] 
	there are two sub-cases.
	\begin{description}
	\item[Case 2a: $\tau_v(1)=k-2$:]
		Then for any child $b$ of $v$ we have $\mu_{\fromboss b}^*=1-\mu_{b\toboss}^*$.
		With $\tau_v(1)=\tau_v(0,1,0)$ we obtain, that $v$ has $k-2$ children of this type.
		Furthermore, $\tau_v(0)=\tau_v(0,0,1)$ is an independent random variable with distribution $\Po\br{c\br{1-{p^*}^{r-1}}}$.
	\item[Case 2b: $\tau_v(1)<k-2$:]
		We have $\mu_{\fromboss b}^*=0$ for all children $b$ of $v$.
		Hence, $\tau_v(0)=\tau_v(0,0,0)$ has distribution $\Po\br{c\br{1-{p^*}^{r-1}}}$ 
		Further, $\tau_v(1)=\tau_v(0,1,0)$ has distribution $\Po_{<k-2}(cp^*)$ and is independent of $\tau_v(0)$.
	\end{description}
	Since the first sub-case occurs with probability $\bar q$ and the second one accordingly with probability $1-\bar q$, we obtain
	the generating function $f_{001}$.
\item[Case 3: $(\mu_v^*,\mu_{v\toboss}^*,\mu_{\fromboss v}^*)=010$:] 
	Because $\mu_v^*=0$ and $\mu_{\toboss v}^*=1$, we have $\tau_v(1)=k-1$ with certainty.
	Further, because $\mu_{\fromboss v}^*=0$ and $\tau_v(1)=k-1$, (\ref{eqKathrin1e}) entails that $\mu_{\fromboss b}^*=1-\mu_{b\toboss}$
		for all children $b$ of $v$.
	Hence, $\tau(0)=\tau(0,0,1)=\Po\br{c\br{1-{p^*}^{r-1}}}$ and $\tau_v(1)=\tau_v(0,1,0)=k-1$. 
	Thus, the offspring distribution of $v$ is given by $g_{010}$.
\item[Case 4: $(\mu_v^*,\mu_{ v\toboss}^*,\mu_{\fromboss v}^*)=110$:] 
	Since $\mu_v^*=1$, (\ref{eqKathrin1e}) entails that $\mu_{\fromboss b}^*=1$ for all children $b$ of $v$.
	Hence $\tau_v(0)=\tau_v(0,0,1)$ and  $\tau_v(1)=\tau_v(1,1,1)\geq k$.
	Consequently, $\tau_v(1)$ has distribution $\Po_{\geq k}\br{c{p^*}^{r-1}}$ independently of $\tau_v(0)$ with distribution $\Po\br{c\br{1-{p^*}^{r-1}}}$.
	Thus, we obtain $g_{110}$.
\item[Case 5: $(\mu_v^*,\mu_{ v\toboss}^*,\mu_{\fromboss v}^*)=111$:] 
	As in the previous case, $\mu_v^*=1$, (\ref{eqKathrin1e}) ensures that $\mu_{\fromboss b}^*=1$ for all children $b$ of $v$.
	Thus, $\tau_v(0)=\tau_v(0,0,1)$.
	Furthermore, as $\mu_v^*=\mu_{\fromboss v}^*=1$, $\tau_v(1)=\tau_v(1,1,1)$ has distribution $\Po_{\geq k-1}(c{p^*}^{r-1})$.
	In summary, the distribution of the offspring of $v$ is given by $g_{111}$.
	\end{description}
Thus, in every possible case the distribution of $\tau_x$ coincides with the offspring distributions determined by the generating functions in Figures~\ref{Fig_g} and~\ref{Fig_f}.
\qquad
\end{proof}

{\em Proof of \Thm~\ref{Thm_lwc}.}\\
By \Prop~\ref{Prop_main} we have
	$$\lim_{n\to\infty}\Lambda_{d,r,k,n}=\atom_{\theta_{d,r,k}}\quad \text{and}\quad \theta_{d,r,k}=\lim_{t\to\infty}\cL([\T_t(d,r,k)]).$$
Moreover, combining \Lem s~\ref{Lemma_sizeless}, \ref{Lemma_topdown}, \ref{Lemma_decorated} and~\ref{Lem_id}, we see that
$\theta_{d,r,k}=\thet_{d,r,k,p^*}$.
\qquad
\endproof

{\em Proof of \Thm~\ref{Thm_main}.}\\
Let  $s\geq0$ and let $\tau$ be a $\cbc{0,1}$-marked rooted acyclic factor graph.
The function
	$f=\kappa_{\tau,s}$
is continuous, and we let
	$$z=\Erw_{\T(d,r,k,p^*)}[f([\T(d,r,k,p^*)])]=\pr\brk{\partial^s[\T(d,r,k,p^*)]=\partial^s[\tau]}.$$
Since $\cP(\fG_{\cbc{0,1}})$ is equipped with the weak topology, the function
	$$F:\cP(\fG_{\cbc{0,1}})\to\RR,\qquad\xi\mapsto\abs{\int f\dd\nu-z},$$
is continuous as well.
Furthermore, \Thm~\ref{Thm_lwc} implies that $\Lambda_{d,r,k,n}$ converges to $\atom_{\thet_{d,r,k,p^*}}$ weakly in $\cP^2(\fG_{\cbc{0,1}})$, i.e.
	\begin{align}\label{eqConvProb1}
	\lim_{n\to\infty}\int F\dd\Lambda_{d,r,k,n}
		=\abs{\int f\dd \thet_{d,r,k,p^*}-z}=|\Erw_{\T(d,k,p^*)}[f([\T(d,r,k,p^*)])]-z|=0.
	\end{align}
Let
	$X_\tau(\vF)=n^{-1}\abs{\cbc{v\in[n]:\partial^s[\vF_v,v,\sigma_{k,\vF_v}]=\partial^s[\tau]}}.$
Plugging in the definition of $\Lambda_{d,r,k,n}$, we obtain
	\begin{align*}
	\int F\dd\Lambda_{d,r,k,n}&=
		\Erw_{\vF}\abs{X_\tau(\vF)-z}.
	\end{align*}
Using (\ref{eqConvProb1}), we obtain that $X_\tau(\vF)$ converges to $z$ in probability.
\qquad
\endproof

\section{Appendix}
\subsection{Proof of \Thm~\ref{Thm_GW}}
\begin{proof}
Let $T$ be an acyclic factor graph and
let
$$
 X_s=X_s(T,w_0)=\frac1n \sum_{v\in [n]}\vecone\cbc{\partial^{s}(\vF,v)\ism\partial^{s}(T,w_0)}
$$
and
$$
 p_s=p_s(T,w_0)=\pr\brk{\partial^{s}(\T(d,r),v_0)\ism\partial^{s}(T,w_0)}.
$$ 
We claim that
\begin{equation}\label{eq_prob}
 X_s\to p_s \quad\mbox{in probability as $n$ tends to infinity}
\end{equation}
for all $s\geq 0$.

As a first step we determine the 
probability that a variable node 
of $\vec F$ lies in a cycle that comprises
$l$ variable node in total.
This probability is bounded 
by 
\begin{equation}
\label{app:boundcyc}
l!\bink{n-1}{l-1}\brk{\bink{n}{r-2}
\frac{d}{n^{r-1}}}^l=O\br{\frac 1n}.
\end{equation}

We proceed with proving  assertion (\ref{eq_prob}) for even distances from the root. Therefore, we let $s=2t$. 
The proof proceeds by induction on $t$. For $t=0$ there is nothing to show, because $\partial^{2t}(\vF,v)$ only consists of $v$. To proceed from $t$ to $t+1$, let $m$ denote the number 
of children $a_1,\ldots,a_m$ of $w_0$ in $T$. For each $1\leq i \leq m$ and $1\leq l \leq r-1$ let $T_i^{(l)}$ denote the tree rooted at the $l$-th child 
$v_i^{(l)}$ of $a_i$ in $T$ in the forest obtained
by deleting $w_0$ from $T$. Finally, for some $\tilde r_i\leq r-1$ let $C_i^{1},\ldots, C_i^{(\tilde r_i)}$ denote the distinct isomorphism classes among 
$\{\partial^{2t}(\T_i^{(l)},v_i^{(l)}) : l\leq r-1\}$ and let $s_i^{(j)}=|\{l\leq r-1 : \partial^{2t}(\T_i^{(l)},v_i^{(l)})\in C_i^{j}\}|$. Let $v$ be an arbitrary variable node in $\vF$.
Our aim is to determine the probability of the event that $\partial^{2(t+1)}(\vF,v)$ is isomorphic to $\partial^{2t}(T,w_0)$. For that reason we think of $\vH$ being created in two
rounds. First we insert every possible edge in $[n]\setminus {v}$ with probability $p$ independently. Then we complete subset of cardinality $r-1$ of $[n]\setminus {v}$ to an edge with $v$ 
with probability $p$ independently. For the above event to happen $v$ must be contained in exactly $m$ edges. The number of edges that contain $v$ is binomial distributed with 
parameters $\binom{n}{r-1}$ and $p$. Since $\binom{n}{r-1}p\rightarrow \frac{d}{(r-1)!}$ as $n$ tends to infinity, we can approximate this number with a Poisson distribution, and $v$ is contained
in the right number of edges with probability
  \begin{equation*}
   \frac{c^m}{m!\exp(c)} + \mbox{o}(1).
  \end{equation*}
By \eqref{app:boundcyc},
 \whp is not contained in a cycle of
 length $4t+4$, i.e.
 all the pairwise distances between factor nodes connected to $v$ in the
factor graph are at least $4t+2$.
Thus, conditioned on $v$ having the right degree, by induction $v$ forms an edge with exactly $s_j^{(l)}$ vertices with neighbourhood isomorphic to  
$\partial^{2t}(\T_i^{(l)},v_i^{(l)})\in C_i^{j}$ with probability
$$
\binom{r-1}{s_j^{(1)},\ldots,s_j^{(\tilde r_j)}}\prod_{i=1}^{\tilde r_j}p_t(C_j^{(i)}) + \mbox{o}(1)
$$
independently for all $c$ edges $a_j$. Therefore
$$
    \Erw_{\vF}[X_{2(t+1)}]=\frac{\br{c}^m}{m!\exp\br{c}}
    \prod_{j=1}^m\br{
    \binom{r-1}{s_j^{(1)},\ldots,s_j^{(\tilde r_j)}}\prod_{i=1}^{\tilde r_j}p_{2t}(C_j^{(i)})} + \mbox{o}(1).
$$
By definition of $\T(d,r)$, we obtain $\Erw_{\vF}[X_{2(t+1)}]=p_{t+1} + \mbox{o}(1)$. As a next step we determine $\Erw_{\vF}[X_{2(t+1)}^2]$ to apply Chebyshev's inequality. 
Let $\vec v,\vec w\in[n]$ be two randomly chosen vertices. Then \whp\ 
$\vec v$ and $\vec w$ have distance 
at least $4t+4$ in the factor graph $\vF$, conditioned on which $\partial^{2(t+1)}(\vF,\vec v)$ and $\partial^{2(t+1)}(\vF,\vec w)$ are independent. Therefore, we obtain
\begin{align*}
   &\pr_{\vec v,\vec w}\br{\partial^{2(t+1)}(\vF,\vec v)\ism\partial^{2(t+1)}(T,w_0)\wedge\partial^{2(t+1)}(\vF,\vec w)\ism\partial^{2(t+1)}(T,w_0)}\\
   &=\pr_{\vec v}\br{\partial^{2(t+1)}(\vF,\vec v)\ism\partial^{2(t+1)}(T,w_0)}
    \pr_{\vec w}\br{\partial^{2(t+1)}(\vF,\vec w)\ism\partial^{2(t+1)}(T,w_0)}+\mbox{o}(1)\\
  \end{align*}
And finally
  \begin{align*}
    &\Erw_{\vF}[X_{2(t+1)}^2]&\\
    &\;\;=\Erw_{\vF}\brk{\pr_{\vec v}\br{\partial^{2(t+1)}(\vF,\vec v)\ism\partial^{2(t+1)}(T,w_0)}
    \pr_{\vec w}\br{\partial^{2(t+1)}(\vF,\vec w)\ism\partial^{2(t+1)}(T,w_0)}}\\
    &\;\;\;+\frac 1n \Erw_{\vF}[X_{2(t+1)}] +\mbox{o}(1)\\
    &\;\;=\Erw_{\vF}[X_{2(t+1)}]^2 +\mbox{o}(1).
  \end{align*}
Assertion (\ref{eq_prob}) follows from Chebyshev's inequality. Since every factor node in $\vF$ and $\T(d,r)$ has degree exactly $r$ the assertion follows for odd $t$. 

To prove the statement of the Theorem it suffices to show that $\int f\dd\Lambda_{d,r,n}$ converges to $f(\cL([\T(d,r)]))$ for every continuous function $f:\cP(\fG)\to\RR$ with bounded support.
Since $\cP(\fG)$ together with the weak topology is a Polish space, it is metrizable and weak convergence is equivalent to convergence in some metric on 
$\cP(\fG)$. Assertion (\ref{eq_prob}) implies that the distance of $\lambda_{\vF}$ and $\cL([\T(d,r)])$ in this metric converges to $0$ in probability (as $n$ tends to infinity). 
Since $f$ is uniformly continuous this implies that $f(\lambda_{\vF})$ converges to $f(\cL([\T(d,r)]))$ in probability. Since $f(\lambda_{\vF})$ is a bounded random variable 
and $f(\cL([\T(d,r)]))\in\RR$, convergence in probability implies $L_1$ convergence and we obtain
\begin{equation*}
 \abs{\Erw_{\vF}\brk{f(\lambda_{\vF})]-f(\cL([\T(d,r)]))}}\to 0.
\end{equation*}
Plugging in the definition of $\Lambda_{d,r,n}$ we obtain $\int f\dd\Lambda_{d,r,n}=\Erw_{\vF}\brk{f(\lambda_{\vF})]}$, which completes the proof.
\qquad
\end{proof}

\subsection{Tables of Definitions}

We begin with listing the random acyclic factor graphs defined throughout the paper.
$\T(d,r)$ is the unmarked recursive acyclic factor graph in which each variable node has $\Po(d/(r-1)!)$ children independently
and each factor node has $r-1$ children deterministically. From $\T(d,r)$ we may construct labels on variable and factor nodes
\emph{bottom-up} using a variant of Warning Propagation (WP). Note that in all trees defined below it holds that variable nodes are of types $\vertexl$ and factor nodes 
are of types $\edgel.$

\begin{table}[H]
\footnotesize
\caption{Bottom-up Trees}
 \begin{tabular}{p{0.15\textwidth} p{0.26\textwidth} p{0.49\textwidth} lll}
Tree name & Types & Further description\\
$\T_t(d,r,k)$ & $\{0,1\}$ & Obtained from $\T(d,r)$ after $t$ rounds of WP.  Every node is labelled with its WP mark.\\
$\hat{\T}_t(d,r,k)$ & $\labelset$ & Obtained from $\T(d,r)$ after $t$ rounds of 5-type WP.
                                  Every node is labelled with the triple of its WP mark, the message to its parent and the message from its parent.\\
  \end{tabular}
\end{table}

Alternatively, we may construct labels \emph{top-down}, so the labels are constructed simultaneously with the tree.

\begin{table}[H]
\caption{Top-down Trees}
\footnotesize
 \begin{tabular}{p{0.15\textwidth} p{0.26\textwidth} p{0.49\textwidth} lll}
  Tree name & Types & Further description\\
$\hat{\T}(d,r,k,p)$ & $\labelset$ & Constructed according to the generating functions of Figures~\ref{Fig_g} and~\ref{Fig_f}.\\
$\T(d,r,k,p)$ & $\{0,1\}$ & $2$-type projection of $\hat{\T}(d,r,k,p)$ to the first bit.\\
 \end{tabular}
\end{table}

There are two notational conventions. With $\hat{\T}$ we denote $9$-type trees, while $\T_t$ indicates a tree whose labels
were created bottom-up. 
Finally, we have two more trees which allow us in a sense to transition between the top-down and the bottom-up trees.

\begin{table}[H]
\caption{Transition Trees}
\footnotesize
 \begin{tabular}{p{0.15\textwidth} p{0.26\textwidth} p{0.49\textwidth} lll}
  Tree name & Types & Further description\\
$\T^*(d,r,k)$ & $\{0,1\}$ & Labels created top-down, mimic the upwards messages $\mu_{x\toboss}$.\\
$\hat{\T}^*(d,r,k)$ & $\labelset$ & Obtained from $\T^*(d,r,k)$ according to the rules~\eqref{eqKathrin1e}-\eqref{eqKathrin2v}.
 \end{tabular}
\end{table}

Lemma~\ref{Lem_id} says that $\hat{\T}^*(d,r,k)$ has the same distribution as $\hat{\T}(d,r,k,p^*)$.

We define various probability distributions and their corresponding laws in the paper.
We include some equivalences which are not part of the definitions, but which we prove during the paper. As before, we use the shorthand $F_{\vH}=\vF$.

\begin{table}[H]
\caption{Distributions in $\cP(\fF)$, resp. $\cP(\fF_{\{0,1\}})$}
\footnotesize
 \begin{tabular}{p{0.15\textwidth} p{0.33\textwidth} p{0.42\textwidth} lll}
Distribution & Definition & Description\\
$\lambda_{F}$ & $\frac1{|\cV(F)|}\sum_{v\in \cV(F)}\delta_{[F_v,v]}$ & distribution of neighbourhoods of variable nodes in $F$.\\
$\lambda_{k,F}$ & $\frac1{|\cV(F)|}\sum_{v\in\cV(F)}\delta_{[F_v,v,\sigma_{k,F_v}]}$ & nodes labelled according to membership of the core.\\
$\lambda_{k,F,t}$ & $\frac1{|\cV(F)|}\sum_{v\in \cV(F)}\delta_{[F_v,v,\mu_{\nix}(t|F_v)]}$ & vertices labelled after $t$ rounds of WP.\\
\end{tabular}
\end{table}

\begin{table}[H]
\caption{Distributions in $\cP^2(\fF)$, resp. $\cP^2(\fF_{\{0,1\}})$}
\footnotesize
 \begin{tabular}{p{0.15\textwidth} p{0.22\textwidth} p{0.53\textwidth} lll}
Distribution & Definition & Description\\
$\Lambda_{d,r,n}$ & $\Erw_{\vF}[\delta_{\lambda_{\vF}}]$ & distribution of neighbourhoods of the random factor graph \eqref{eqLambdadn}.\\
$\Lambda_{d,r,k,n}$ & $\Erw_{\vF}[\delta_{\lambda_{k,\vF}}]$ & vertices labelled according to membership of the core \eqref{eqLambdadkn}.\\
$\Lambda_{d,r,k,n,t}$ & $\Erw_{\vF}[\lambda_{k,\vF,t}]$ & vertices labelled after $t$ rounds of WP \eqref{eqLambdadknt}.\\
$\Lambda_{d,r,k}$ & $\lim_{n\to \infty} \Lambda_{d,r,k,n} $ & limiting labelled neighbourhood distribution of the random hypergraph.\\
\end{tabular}
\end{table}

\begin{table}[H]
\footnotesize
\caption{Distribution laws}
 \begin{tabular}{p{0.15\textwidth} p{0.35\textwidth} p{0.40\textwidth} lll}
Distribution Law & Definition & Remarks\\
$\thet_{d,r,k,p}$ & $\cL[\T(d,r,k,p)]$ & \\
$\theta_{d,r,k}$ & $\lim_{t\to \infty}\cL[\T_t(d,r,k)]$ & = $\thet_{d,r,k,p^*}$ \; (Proposition~\ref{Prop_main}, \Thm~\ref{Thm_lwc}).\\
$\theta_{d,r,k,t}^{s}$ & $\cL(\partial^s[\T(d,r),v_0,\mu_{\nix\toboss}(t|\T(d,r))])$ & \\
$\theta_{d,r,k}^{s,*}$ & $\cL(\partial^s[\T(d,r),v_0,\mu_{\nix\toboss}^*(s|\T(d,r),s)])$ & $ = \lim_{t\to \infty}\theta_{d,r,k,t}^{s} 
= \cL(\partial^s [\T^*(d,r,k)])$ \\
\end{tabular}
\end{table}

\end{document}